\numberwithin{equation}{section}
\newtheorem{thm}[equation]{Theorem}
\newtheorem{conj}[equation]{Conjecture}
\newtheorem{lma}[equation]{Lemma}
\newtheorem{prop}[equation]{Proposition}
\newtheorem{claim}[equation]{Claim}
\theoremstyle{remark}
\newtheorem{rem}[equation]{Remark}
\newcommand{\set}[1]{\left\{#1\right\}}
\newcommand{\pr}[1]{\left(#1\right)}
\newcommand{\bb}[1]{\mathbb{#1}}
\def\be{\begin{equation}}
\def\ee{\end{equation}}
\def\beq{\begin{eqnarray}}
\def\eeq{\end{eqnarray}}
\def\bee{\begin{equation*}}
\def\eee{\end{equation*}}
\def\Ric{\operatorname{Ric}}
\def\Rm{\text{\rm Rm}}
\def\sbst{\subseteq}
\def\e{\varepsilon}
\def\a{{\alpha}}
\def\b{{\beta}}
\def\p{\partial}
\def\heat{\left(\frac{\p}{\p t}-\Delta_{g(t)}\right)}
\def\cS{\mathcal{S}}
\begin{document}

\title{Local mollification of metrics with small curvature concentration}

 \author{Man-Chun Lee}
\address[Man-Chun Lee]{Department of Mathematics, The Chinese University of Hong Kong, Shatin, Hong Kong, China}
\email{mclee@math.cuhk.edu.hk}

 \author{Tang-Kai Lee}
\address[Tang-Kai Lee]{Department of Mathematics, Columbia University, New York, NY, 10027.}
\email{lee.tang-kai@columbia.edu}


\date{\today}

\begin{abstract}
In this work, we establish a local smoothing result on metrics with small curvature concentration with respect to Sobolev constants and volume growth. 
As an application, we prove that the Gromov-Hausdorff limit of compact manifolds with bounded curvature concentration, Ahlfors $n$-regularity, and bounded Sobolev constants is a topological manifold away from finitely many points. 
In the complete non-compact case, we show that manifolds with Euclidean-type Sobolev inequality, Euclidean volume growth, and small curvature concentration are necessarily diffeomorphic to Euclidean spaces. 
In contrast with all previous works, we remove the Ricci curvature condition. 
\end{abstract}

\maketitle

\markboth{Man-Chun Lee, Tang-Kai Lee}{Local mollification of metrics with small curvature concentration}

\section{\bf Introduction}\label{sec: introduction}

Given a complete manifold $(M,g),$ 
one of the general themes in Riemannian geometry is to characterize the topology of $M$ under curvature conditions on the metric $g$. 
An important theorem of Cheeger \cite{Cheeger1970} shows that there are only finitely many diﬀeomorphism types for non-collapsed compact manifolds with bounded curvature and diameter. 
There have been many efforts in generalizing Cheeger’s finiteness theorem to weaker curvature assumptions. 
Relaxing the pointwise control $||\Rm||_{L^\infty}$ to an integral bound $||\Rm||_{L^p}$  has attracted much attention. 
The case $p=n/2,$ which we call {\it bounded curvature concentration}, is particularly interesting since the integral is scaling invariant without being weighted by the volume. 
The integral bound of curvature also appears naturally in many geometric problems. 
Perhaps more importantly, integral curvature bounds naturally arise in studying singular spaces as a weak formulation of curvature. 
For instance, it plays an important role in the RCD theory and positive mass theorems on asymptotically flat manifolds with  singularities. 
One naturally asks what geometric and topological results can be extended to integral curvature bounds. 

There have been many interesting generalizations of comparison geometry in this direction; for instance, see \cites{PetersenWei1,PetersenWei,Carron2019} and the references therein.  
An integral curvature bound is however not strong enough to constrain the topology in general. 
Indeed, it was shown by Yang~\cite{DeaneYang} that the exact analogy of Cheeger's finiteness cannot be true in case $p<+\infty$, due to the failure of controlling the volume locally. 
There is a gap phenomenon, as already observed in \cites{Gallot,DeaneYang,PetersenWei1,PetersenWei}; 
see \cite{DaiWei}*{Section 5} for a survey of more results in this direction.

In this work, we are particularly interested in the gap phenomenon and compactness under the scaling invariant curvature control $||\Rm||_{L^{n/2}(M)}<+\infty$.  
In Ricci geometry, this condition has been studied extensively in the literature; see \cites{Anderson1,Anderson,AndersonCheeger,Bando,Nakajima1988} for examples. 
In \cite{Cheeger2003}, Cheeger systemically studied the singularities of the Gromov--Hausdorff limit of non-collapsed manifolds with Ricci lower bounds and integral curvature control. 
Particularly when $p=n/2$, Cheeger showed that the singularities must be isolated and the tangent cone at a singular point must be the cone over a quotient of the round sphere. 
One of the key ingredients is to prove an epsilon regularity result for non-collapsed manifold with small curvature concentration. 
The method of Cheeger is elliptic, based on harmonic functions.

It is natural to ask if a metric with sufficiently small integral curvature can be perturbed to a metric with pointwise curvature bound in suitable topology, in order to study geometric compactness with only an integral curvature bound. 
To the best of our knowledge, the smoothing by Ricci flow was first considered by Yang \cite{DeaneYang}. 
The Ricci flow is a one-parameter family of metrics $g(t)$ on a manifold such that 
$$\partial_t g(t)=-2\Ric(g(t)).$$
Viewed as a nonlinear geometric heat equation, the flow has been proven useful to regularize metrics and obtain striking consequences in geometry and topology. 
When both $||\Rm||_{L^{n/2}}$ and $||\Ric||_{L^p},p>n/2,$ are small, Yang \cite{DeaneYang} used the Ricci flow to show that the metric can be perturbed in the $C^0$ topology to a metric with bounded geometry. 
However, the $C^0$ perturbation is not expected in general if one only has a scaling invariant curvature bound, which can be seen from the point of view of tangent cones.

In \cite{ChanHuangLee2024}, the first named author with Chan and Huang considered a parabolic approach to Cheeger's epsilon regularity theorem.  
In short, non-collapsed metrics with Ricci lower bounds and small local curvature concentration can be regularized by Ricci flow for a uniform short time with instantaneous bounded geometry. 
The perturbation is in the (pointed) Gromov--Hausdorff topology, in the presence of Ricci lower bounds by the distance estimate of \cite{TianWangJAMS}.
This improved and sharpened the result \cite{DeaneYang} of Yang.
This is based on constructing distance like functions with critical integral Hessian estimates, using comparison geometry, so as to construct approximating Ricci flow solutions with suitable control. 
This smoothing result was later utilized by the first named author and Chan \cite{ChanLee2024} to prove a metric gap theorem of manifolds with $\Ric\geq 0$, Euclidean volume growth, and small curvature concentration; see also \cite{Martens2025}. 
The proof in \cite{ChanHuangLee2024} relies on Schoen--Yau's construction of proper functions which uses Bochner's formula for eigenfunctions. 
The Ricci curvature lower bound is crucial there in obtaining gradient estimates.

In this work, we further advance the results in \cite{ChanHuangLee2024, Martens2025-2} and show that the local smoothing holds in a greater generality.
In particular, we show that the assumption of Ricci curvature lower bounds of any kind is redundant. 

\begin{thm}\label{thm:localRF}
For $n\geq 3$ and $v_0,C_s>0$, there exist $\delta_1(n,v_0,C_s),\Lambda_1(n,v_0,C_s),L(n,v_0,C_s)$, $r_1(n,v_0,C_s),$ and $ \hat T(n,v_0,C_s)>0$, such that the following holds. 
Let $(M^n,g_0)$ be a manifold such that $B_{g_0}(x_0,3+\bar r)\Subset M$ for some $(x_0,\bar r)\in M\times [0,+\infty)$.
Suppose for all $x\in B_{g_0}(x_0,2+\bar r),$
\begin{enumerate}
\item[(a)] $ \mathrm{Vol}_{g_0}\left(B_{g_0}(x,r)\right)\leq v_0 r^n$ for all $r\in (0,1]$;
\item[(b)] for all $u\in W^{1,2}_0( B_{g_0}(x,1), g_0)$,
$$\left(\int_{ B_{g_0}(x,1)} u^\frac{2n}{n-2}\,d\mathrm{vol}_{g_0}\right)^\frac{n-2}{n}\leq C_s \int_{ B_{g_0}(x,1)} \left(|\nabla u|^2+u^2 \right)\, d\mathrm{vol}_{g_0};$$
\item[(c)] $||\Rm(g_0)||_{L^{n/2}(B_{g_0}(x,1),g_0)}\leq \e$  for some $\e\in (0,\delta_1]$. 
\end{enumerate}
Then there exists a smooth Ricci flow solution $g(t)$ defined on $B_{g_0}(x_0,1+\bar r)\times [0,\hat T]$ such that $g(0)=g_0$ and 
\begin{enumerate}
\item $ |\Rm(g(t))|\leq \Lambda_1  t^{-1}(\e+t^{2/n})\leq \frac1{4(n-1)}t^{-1}$,
\item $\mathrm{inj}(g(t))\geq \sqrt{\Lambda_1^{-1} t}$,
\item $ ||\Rm(g(t))||_{L^{n/2}(B_{g(t)}(x,1/2))}\leq \Lambda_1 (\e +t^{2/n}) $, and
\item $\nu \left(B_{g(t)}(x,2L\sqrt{\hat T},g(t), L^2\hat T \right)\geq -\Lambda_1$
\end{enumerate}
for all $(x,t)\in B_{g_0}(x_0,1+\bar r)\times (0,\hat T]$. Moreover, for all $x,y\in B_{g_0}(x_0,r_1)$ and $ t\in [0,\hat T]$,
$$ \Lambda_1^{-1} \left[d_{g(t)}(x,y)\right]^\frac{1}{1-2(n-1)\Lambda_1\e}\leq d_{g_0}(x,y)\leq \Lambda_1 \left[d_{g(t)}(x,y)\right]^\frac{1}{1+2(n-1)\Lambda_1\e}.$$
\end{thm}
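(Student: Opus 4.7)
The plan is to construct the smooth Ricci flow by first extending $g_0$ to a complete bounded-geometry metric on an ambient space, applying classical short-time existence, and then extracting interior estimates on $B_{g_0}(x_0,1+\bar r)$ depending only on $(n,v_0,C_s)$. These estimates must come purely from the Sobolev inequality (b), the Ahlfors-type upper bound (a), and the smallness of the scaling-invariant curvature norm, since the removal of any Ricci lower bound forbids the Schoen--Yau distance-function/eigenfunction machinery of \cite{ChanHuangLee2024}. Concretely, I would first extend $g_0$ outside $B_{g_0}(x_0,2.5+\bar r)$ via a partition of unity to a complete smooth metric $\td g_0$ of bounded geometry, then flow by Ricci, and verify that the interior estimates on $B_{g_0}(x_0,1+\bar r)$ are independent of the extension.

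\textbf{Integral and pointwise curvature estimates.}
The analytic core is a scale-invariant parabolic Moser iteration driven by the Sobolev inequality. The evolution of curvature along Ricci flow yields
$$(\partial_t-\Delta_{g(t)})|\Rm|^2 \leq -2|\nabla \Rm|^2 + C_n |\Rm|^3,$$
and testing the corresponding inequality for $|\Rm|^p$ against a cut-off $\eta^2$, together with H\"older on the reaction term, gives the key estimate
$$\int |\Rm|^{p+1}\eta^2\,d\mathrm{vol}_{g(t)} \leq \|\Rm(g(t))\|_{L^{n/2}(\spt\eta)}\cdot C_s \int\left(|\nabla(|\Rm|^{p/2}\eta)|^2 + (|\Rm|^{p/2}\eta)^2\right)d\mathrm{vol}_{g(t)}.$$
If the $L^{n/2}$ norm is small relative to $1/C_s$, the Dirichlet term is absorbed into the negative gradient contribution from the evolution, closing a differential inequality for every $L^p$ norm of $|\Rm|$. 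Iteration on dyadic parabolic cylinders inside $B_{g_0}(x_0,1+\bar r)$ then yields the pointwise bound $|\Rm(g(t))|\leq \Lambda_1\e/t$ together with the propagated integral bound $\|\Rm(g(t))\|_{L^{n/2}(B_{g(t)}(x,1/2))}\leq \Lambda_1\e$. Condition (a) is used to convert local volumes to scale-invariant constants in the iteration, while $\delta_1$ is chosen small enough that the absorption is effective and $\Lambda_1\e\leq 1/\bigl(4(n-1)\bigr)$.

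\textbf{Entropy, injectivity radius, and distance distortion.}
A Euclidean-type $L^2$-Sobolev inequality at scale $1$ implies log-Sobolev inequalities on all smaller scales, hence Perelman's $\nu$-functional lower bound $\nu(B_{g(t)}(x,2L\sqrt{\hat T}),g(t),L^2\hat T)\geq -\Lambda_1$ for a distinguished scale $L=L(n,v_0,C_s)$ chosen so that the associated conjugate heat kernel stays inside the good region. Combined with $|\Rm(g(t))|\leq \Lambda_1\e/t$, Perelman's non-collapsing argument gives $\mathrm{inj}(g(t))\geq \sqrt{t}$. The distance distortion follows by integrating the Ricci bound $|\Ric(g(t))|\leq (n-1)\Lambda_1\e\, t^{-1}$ along $g(t)$-minimizing geodesics joining $x,y\in B_{g_0}(x_0,r_1)$: the standard first-variation argument yields
$$\left|\frac{d^+}{dt}\log d_{g(t)}(x,y)\right|\leq (n-1)\Lambda_1\e\, t^{-1},$$
which upon integration in time, combined with a careful comparison of $d_{g(t)}$ to $d_{g_0}$ on an intermediate scale (choosing $r_1$ small so that the geodesics remain in the region where all estimates hold), produces the claimed H\"older equivalence with exponents $1/\bigl(1\pm 2(n-1)\Lambda_1\e\bigr)$.

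\textbf{Main obstacle.}
The principal difficulty is that without any Ricci lower bound one cannot build distance-like cut-off functions, and the scaling-invariant integral bound alone cannot produce pointwise Ricci control; the resolution is to replace the geometric tools of \cite{ChanHuangLee2024} entirely by the analytic coupling of Sobolev and evolution above, making smallness of $\e$ the mechanism that triggers absorption. A secondary and tightly interlaced hurdle is that the Sobolev inequality is posed on $g_0$-balls, while the Moser iteration naturally lives on $g(t)$-balls; bridging this requires the distance comparison of the final conclusion, so the four conclusions of the theorem must be proved by a simultaneous induction on $t$ rather than independently.
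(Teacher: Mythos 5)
Your proposal identifies the right theme (close an inequality by absorbing the reaction term via a Sobolev/entropy bound, with smallness of $\e$ as the trigger), and the distance-distortion step at the end is essentially correct. But the overall architecture has two genuine gaps that the paper's proof spends most of its effort repairing, and your proposed resolutions do not address them.

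\textbf{Gap 1: the one-shot extension does not start.} You propose to extend $g_0$ outside $B_{g_0}(x_0,2.5+\bar r)$ by a partition of unity to a complete metric of bounded geometry and then run classical short-time existence once. This cannot give uniform constants, because the hypotheses (a)--(c) give no pointwise curvature bound on $g_0$ at all: condition (c) is only an $L^{n/2}$ bound, so $|\Rm(g_0)|$ may be arbitrarily large anywhere inside the ball (and $M$ itself may be incomplete, so there may be no ``outside'' to extend into). Any Shi-type flow for the extension lives only for a time $\sim\rho_0^2$ with $\rho_0$ depending on $g_0$ itself, not on $(n,v_0,C_s)$. The paper's proof does use a Hochard-type modification (Proposition~\ref{prop:construction-local-RF-new}), but crucially it re-applies it \emph{iteratively}: after each tiny time increment, the a priori estimates from Propositions~\ref{prop:improve-local-total-pre}--\ref{prop:improved-pt-esti} improve the curvature decay from $\a_0/t$ to $2\e_1\Lambda_0/t$ on a slightly smaller ball (Claim~\ref{claim:impro-est}), which \emph{then} permits a new Hochard extension to a longer time interval (Claim~\ref{claim:extension}). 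The conclusion $\hat T(n,v_0,C_s)>0$ is extracted by showing the geometric series of shrinkages converges with positive limit. Your proposal has nothing playing this role.

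\textbf{Gap 2: propagation of the Sobolev/entropy bound is the crux and is unaddressed.} You observe, correctly, that the Sobolev inequality is posed at $t=0$ while the Moser iteration wants it at time $t$, and you say this is handled by ``simultaneous induction.'' But for an incomplete flow with a priori unbounded curvature at $t=0$, neither Perelman's global nor Wang's local $\nu$-monotonicity applies directly; the paper gets around this precisely because every time slab of the constructed flow is, by the Hochard machinery, a restriction of a \emph{complete bounded-curvature} Ricci flow $h_i(t)$, to which Cheng's refined local monotonicity (Lemma~\ref{lma:cheng}) can be applied. The paper then has to carefully choose radius sequences $\hat\beta_i\sim\sigma\,\tilde t_i^{1/5}$ and $\gamma_i$ so that the cumulative entropy loss $\sum_i(\nu_{i+1}-\nu_i)$ is summable and dominated by $1$. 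This bookkeeping \emph{is} the proof of conclusion (4), and it also drives the choice of $L$, $\hat T$, $\mu$, $\sigma$. Your appeal to ``a distinguished scale $L$ chosen so that the conjugate heat kernel stays inside the good region'' gestures at this but provides no mechanism, and a plain scale-1-to-smaller-scales log-Sobolev observation is not enough: on an incomplete, initially rough flow, even the localized entropy need not be monotone without the Hochard completeness structure.

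\textbf{A third, smaller issue.} You assert that ``iteration on dyadic parabolic cylinders'' yields the propagated integral bound $\|\Rm(g(t))\|_{L^{n/2}}\leq\Lambda_1\e$; but in the paper this is Proposition~\ref{prop:improve-local-total-pre}, proved by contradiction via a point-picking argument and a covering count that combines the Ahlfors upper bound (a) with an entropy-based volume lower bound. It is not a byproduct of Moser iteration, and the Ahlfors regularity (a) enters essentially there, not merely to ``convert local volumes to scale-invariant constants.''

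In short, the proposal replaces the two key pieces of machinery (iterative Hochard extension + Cheng's local entropy almost-monotonicity on the complete pieces) by vaguer substitutes that would not close, and thus cannot yield the uniform $\hat T$ and the entropy bound (4).
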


\begin{rem}
The entropy lower bound measures the optimal constant of the log-Sobolev inequality. 
In terms of boundedness, this is equivalent to a (weighted) Sobolev constant. 
See \cites{ChanChenLee2022,Ye2015} for example. 
\end{rem}

From the instantaneous curvature bound of $g(t)$, Theorem~\ref{thm:localRF} provides a local coordinate for $g_0$ quantitatively such that the distance is locally bi-H\"older equivalent to the Euclidean space.
The smoothing is purely local in nature and does not require any local pointwise curvature bound. We use it to consider Ahlfors $n$-regular compact manifolds with bounded diameters, Sobolev constants and curvature concentration. 
We use the local smoothing to show that the Gromov--Hausdorff limit of a sequence of manifolds in this collection must be a topological manifold away from finitely many points. 

\begin{thm}\label{intro:main-2} 
Let $n\geq 3$ and $C_s,v_0,\Lambda_0,D_0>0$. Suppose $(M^n_i,g_i)$ is a sequence of compact manifolds such that  
\begin{enumerate}
\item[(a)] for all $u\in W^{1,2}(M_i)$, 
$$\left(\int_{M_i} u^\frac{2n}{n-2}\,d\mathrm{vol}_{g_i} \right)^\frac{n-2}{n}\leq C_s\int_{M_i} \left(|\nabla u|^2+ u^2 \right) \,d\mathrm{vol}_{g_i};$$
\item[(b)] $\mathrm{Vol}_{g_i}\left(B_{g_i}(x,r) \right)\leq v_0r^n$ for all $(x,r)\in M_i\times (0,1]$;
\item[(c)] $||\Rm(g_i)||_{L^{n/2}(M_i)}\leq \Lambda_0$;
\item[(d)] $\mathrm{diam}(M_i)\leq D_0$.
\end{enumerate}
Then there exists a compact metric space $(X,d_\infty)$ and $\mathcal{S}=\{p_i\}_{i=1}^N$ such that $(M_i,g_i)$ converges subsequentially to $(X,d_\infty)$ in the Gromov--Hausdorff topology and $X\setminus \mathcal{S}$ is a topological manifold. 
\end{thm}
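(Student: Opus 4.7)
The plan is to combine the local Ricci flow mollification of Theorem~\ref{thm:localRF} with a covering argument bounding how often the $L^{n/2}$ curvature norm can concentrate.

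First, hypotheses (a), (b), (d) give Gromov--Hausdorff precompactness. The Sobolev inequality (a) on unit balls, together with the upper volume bound (b), yields a uniform lower volume bound $\mathrm{Vol}_{g_i}(B_{g_i}(x,r)) \ge c_0(n,C_s,v_0) r^n$ for $r \le 1$ by absorbing the $\|u\|_2^2$ term on small supports (so the effective Sobolev is scale-invariant at small scales). With the diameter bound (d), this gives uniformly bounded $\varepsilon$-covering numbers, so a subsequence GH-converges $(M_i, g_i) \to (X, d_\infty)$, with $\varepsilon_i$-approximations $\phi_i: M_i \to X$.

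Second, fix $\delta_1 = \delta_1(n, v_0, C_s)$ from Theorem~\ref{thm:localRF}. For each $r \in (0, 1]$ and each $i$ let $\mathcal{B}_i^{(r)}$ be a maximal $r$-separated subset of $\{p \in M_i : \|\Rm(g_i)\|_{L^{n/2}(B_{g_i}(p,r))} \ge \delta_1/2\}$. By the two-sided volume bound the cover $\{B_{g_i}(p, r)\}_{p \in \mathcal{B}_i^{(r)}}$ has multiplicity at most some $K = K(n,v_0,C_s)$, so summing $L^{n/2}$ norms and applying (c) yields
\[
|\mathcal{B}_i^{(r)}|\,(\delta_1/2)^{n/2} \le K\,\|\Rm(g_i)\|_{L^{n/2}(M_i)}^{n/2} \le K \Lambda_0^{n/2},
\]
and hence $|\mathcal{B}_i^{(r)}| \le N_0 := K(2\Lambda_0/\delta_1)^{n/2}$ uniformly in $i, r$. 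A diagonal extraction over $r = 2^{-k}$ yields Hausdorff-limits $\mathcal{S}_k = \lim_i \phi_i(\mathcal{B}_i^{(2^{-k})}) \subset X$ with $|\mathcal{S}_k| \le N_0$, and I set $\mathcal{S} := \bigcap_{k_0} \overline{\bigcup_{k \ge k_0} \mathcal{S}_k}$, a finite set with $|\mathcal{S}| \le N_0$.

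Third, for $p \in X \setminus \mathcal{S}$ pick $k$ with $d_\infty(p, \mathcal{S}_k) > 10\cdot 2^{-k}$. For large $i$ and $p_i \to p$ this forces $d_{g_i}(q, \mathcal{B}_i^{(2^{-k})}) > 2^{-k}$ for all $q \in B_{g_i}(p_i, 2\cdot 2^{-k})$, so by maximality $\|\Rm(g_i)\|_{L^{n/2}(B_{g_i}(q,2^{-k}))} < \delta_1/2$ throughout this region. Rescaling $\tilde g_i := 2^{2k} g_i$ preserves the Sobolev constant, upper volume bound, and curvature concentration (since $2^{-k} \le 1$), so Theorem~\ref{thm:localRF} applies at $p_i$ with $\bar r = 0$. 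The resulting Ricci flow $\tilde g_i(t)$ on $B_{\tilde g_i}(p_i, 1)$, $t \in [0, \hat T]$, has bounded curvature and injectivity at $t = \hat T$, so Hamilton's compactness gives subsequential smooth convergence $(B_{\tilde g_i}(p_i, r_1), \tilde g_i(\hat T), p_i) \to (N, h, p_\infty)$ to a smooth Riemannian manifold. The uniform bi-H\"older comparison between $d_{\tilde g_i(0)}$ and $d_{\tilde g_i(\hat T)}$ supplied by Theorem~\ref{thm:localRF} passes to the Gromov--Hausdorff limit via Arzel\`a--Ascoli, giving a bi-H\"older homeomorphism between a neighborhood of $p$ in $X$ and an open subset of $(N, h)$. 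Hence $X$ is a topological manifold at every $p \in X \setminus \mathcal{S}$.

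The main obstacle is the diagonalization: one must check that the scale-by-scale bad sets collapse to a single finite $\mathcal{S}$, and that regularity at one single fine scale $2^{-k}$ in $X$ is already enough to trigger the local smoothing uniformly in $i$. A secondary concern is the passage of the bi-H\"older estimate through the GH limit, which requires controlling the bi-H\"older exponent uniformly and extracting the limit homeomorphism via Arzel\`a--Ascoli.
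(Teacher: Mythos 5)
Your overall strategy matches the paper's: GH precompactness, identification of a finite singular set via a volume-packing/counting argument, local Ricci flow mollification on the regular part via Theorem~\ref{thm:localRF}, and transfer of the bi-H\"older distance estimate to the limit space in the manner of Simon--Topping. The step where you regularize near $p\in X\setminus\mathcal{S}$ by rescaling and applying Theorem~\ref{thm:localRF} is essentially what the paper does.

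However, there is a genuine gap in the construction of $\mathcal{S}$ and the assertion that it is finite. You bound $|\mathcal{B}_i^{(2^{-k})}|\le N_0$ uniformly in $i$ and $k$, pass to Hausdorff limits $\mathcal{S}_k$ with $|\mathcal{S}_k|\le N_0$, and then set $\mathcal{S}:=\bigcap_{k_0}\overline{\bigcup_{k\ge k_0}\mathcal{S}_k}$, claiming $|\mathcal{S}|\le N_0$. This does not follow: the Kuratowski $\limsup$ of finite sets of uniformly bounded cardinality can be an arbitrary compact set (e.g., if the $\mathcal{S}_k$ enumerate a dense sequence in an interval, the $\limsup$ is the whole interval). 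To make your definition work, you must observe and use the monotonicity of the bad sets across scales: if $\|\Rm\|_{L^{n/2}(B(p,r))}\ge\delta_1/2$ then the same holds for any $r'>r$; hence each point of $\mathcal{B}_i^{(2^{-k})}$ lies within $2^{-k'}$ of some point of $\mathcal{B}_i^{(2^{-k'})}$ for $k'<k$ by maximality of the coarser net, so $\mathcal{S}_k$ is within $O(2^{-k'})$ of $\mathcal{S}_{k'}$, and the $\limsup$ is captured by $\le N_0$ shrinking balls whose intersection is $\le N_0$ points. Without this, your $|\mathcal{S}|\le N_0$ is unjustified.

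The paper sidesteps this issue by defining the singular set directly on $X$: it sets $\mathcal{S}_r:=\{x\in X:\liminf_i\|\Rm(g_i)\|_{L^{n/2}(B_{g_i}(x_i,r))}\ge\delta_1$ for all approximating sequences $x_i\}$, takes $\mathcal{S}:=\bigcap_{r>0}\mathcal{S}_r$, and proves finiteness in one stroke by picking, for any fixed $r$, a disjoint family $\{B_{d_\infty}(x^j,2r)\}_{j\in J}$ with $x^j\in\mathcal{S}$ covering $\mathcal{S}$ at scale $3r$; pulling back to $M_i$ the disjointness forces $|J|\le(2\Lambda_0/\delta_1)^{n/2}$ by hypothesis (c), a bound independent of $r$, so $|\mathcal{S}|\le(2\Lambda_0/\delta_1)^{n/2}$. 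This is cleaner and avoids the $\limsup$-of-nets bookkeeping entirely. A secondary remark: your derivation of the lower volume bound by ``absorbing the $\|u\|_2^2$ term on small supports'' is hand-wavy as written; the paper simply invokes \cite{Ye2015}*{Lemma 6.1}.
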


In the presence of Ricci lower bounds, the Sobolev constant can be ensured by volume non-collapsing. 
Thus, this theorem can be viewed as a generalization of results in Ricci geometry; see the compactness results in \cites{Anderson1,Anderson,Bando}.  Particularly, we extend the related compactness to cases without Ricci control. When $\Lambda_0$ is sufficiently small, the singular set $\mathcal{S}$ is empty and the limit $X$ admits a smooth structure by the Ricci flow smoothing in Theorem~\ref{thm:localRF}, see \cite[Theorem 1.4]{ChanChenLee2022}.
It is, however, unclear to us if this is the case in general; see Remark~\ref{rem:X-smooth-mfd}. 
In the presence of Ricci curvature lower bounds, it is also known that the tangent cone at $p\in \mathcal{S}$ is given by $C(\mathbb{S}^{n-1}/\Gamma)$ by the deep work of Cheeger \cite{Cheeger2003}. 
This plays an important role in the recent work of Jiang--Wei \cite{JiangWei2025} on the finiteness of diffeomorphisms. 
It will be interesting to know if the finite diffeomorphism theorem in \cite{JiangWei2025} can be generalized to the case without the Ricci curvature assumption. 
This is related to identifying the asymptotic behavior near a singular point $p\in \mathcal{S}$.

Another application of Theorem~\ref{thm:localRF} is about complete non-compact geometry and is related to the case without singularities. 
We consider the topological gap phenomenon of complete non-compact manifolds with small curvature concentration. 
The problem has been studied by many authors; for example, see \cites{MR1484888, Ledoux,Xia1,Xia2, Cheeger2003, Chen2022, ChanHuangLee2024, ChauMarten2026} and the references therein. 
We show that a manifold with small curvature concentration is diffeomorphic to the Euclidean space provided it supports a Euclidean-type Sobolev inequality and has Euclidean volume growth. 
This does not rely on the Ricci curvature and thus  extends the results in \cites{ChanHuangLee2024,Cheeger2003}.

\begin{thm}\label{intro:main}
For $n\geq 3$ and $C_s,v_0>0$, there exists $ \delta_0(n,C_s,v_0)>0$  such that the following is true. 
Suppose $(M^n,g_0)$ is a complete non-compact manifold such that  
\begin{enumerate}
\item[(a)] for all $u\in W^{1,2}_0(M)$, 
$$\left(\int_M u^\frac{2n}{n-2}\,d\mathrm{vol}_{g_0} \right)^\frac{n-2}{n}
\leq C_s \int_M |\nabla u|^2\,d\mathrm{vol}_{g_0};$$
\item[(b)] $\mathrm{Vol}_{g_0}\left(B_{g_0}(x,r) \right)\leq v_0r^n$ for all $(x,r)\in M\times \mathbb{R}_+$;
\item[(c)] $||\Rm(g_0)||_{L^{n/2}(M)}\leq  \delta_0(n,C_s,v_0)$.
\end{enumerate}
Then $M$ is diffeomorphic to $\mathbb{R}^n$.
\end{thm}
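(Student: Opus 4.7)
The plan is to use Theorem~\ref{thm:localRF} to regularize $g_0$ into a metric of bounded geometry, and then to deduce $M\cong \mathbb{R}^n$ from topological rigidity of the smoothed metric.

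\textbf{Global smoothing.} Since hypotheses (a)--(c) of Theorem~\ref{intro:main} hold uniformly at every $x\in M$, I would apply Theorem~\ref{thm:localRF} at a fixed base point $x_0$ with $\bar r$ tending to $\infty$. The flow $g(t)$ and all estimates are then defined on $B_{g_0}(x_0,1+\bar r)\times[0,\hat T]$ with constants depending only on $n,C_s,v_0$. Cheeger--Hamilton compactness (permissible by the uniform curvature bound $|\Rm(g(t))|\leq \Lambda_1\delta_0/t$ and injectivity bound $\mathrm{inj}(g(t))\geq\sqrt{t}$) then produces a complete smooth Ricci flow on $M\times[0,\hat T]$ with $g(0)=g_0$ satisfying the same bounds globally.

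\textbf{Transfer of Euclidean conditions.} Applying Theorem~\ref{thm:localRF} at every $x\in M$ supplies the bi-H\"older distance comparison in the final line of the theorem on a ball of radius $r_1$ around every point. Gluing these together using the bounded curvature of $g(t)$, I would show $g(t)$ is globally bi-H\"older to $g_0$ with exponent $1+O(\delta_0)$. For $\delta_0$ sufficiently small, this transfers the Euclidean-type Sobolev inequality and Euclidean volume growth from $g_0$ to $g_1:=g(\hat T)$ with constants depending only on $n,C_s,v_0$, while conclusion (3) of Theorem~\ref{thm:localRF} keeps the global $L^{n/2}$-norm of curvature small. Thus $g_1$ has bounded curvature, positive injectivity radius, Euclidean Sobolev, Euclidean volume growth, and small scale-invariant curvature.

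\textbf{Topological rigidity.} From $g_1$ I would extend the Ricci flow to all $t\geq 0$ via Perelman's pseudolocality combined with the preserved Euclidean Sobolev inequality, obtaining a long-time solution with $|\Rm(g(t))|\lesssim 1/t$. Parabolic rescaling $g_\lambda(t):=\lambda^{-2}g(\lambda^2 t)$ and Cheeger--Gromov--Hamilton compactness then show that as $\lambda\to\infty$, $(M,g_\lambda(1),\mathrm{pt})$ subsequentially converges to the standard Ricci flow on Euclidean space: the scale-invariant $L^{n/2}$-smallness decays on compact sets, while the Euclidean Sobolev constant prevents collapse and forces the limit volume ratio to be Euclidean, hence the limit flat. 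This convergence yields a diffeomorphism $M\cong \mathbb{R}^n$; since the underlying smooth structure is unchanged along the flow, the same holds for $(M,g_0)$.

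\textbf{Main obstacle.} The principal difficulty is carrying out the long-time extension and blow-down analysis without a Ricci lower bound, since classical Cheeger--Colding tangent cone theory is unavailable. The natural substitute is Perelman's $\nu$-entropy, supplied uniformly by Theorem~\ref{thm:localRF}(4), together with its monotonicity, used in tandem with the scale-invariance of the Euclidean Sobolev inequality and of $\|\Rm\|_{L^{n/2}}$. Propagating these quantities along the flow in a sufficiently controlled way to identify the parabolic blow-down as flat Euclidean space is the heart of the argument.
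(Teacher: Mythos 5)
Your overall architecture—smoothing, long-time extension, blow-down rigidity—is the right skeleton, but the way you try to pass from the short-time flow to the long-time flow has a genuine gap that the paper avoids entirely by a different and much cleaner device.

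The problematic step is your ``transfer of Euclidean conditions.'' You claim that the bi-H\"older distance comparison in the last line of Theorem~\ref{thm:localRF}, with exponent $1+O(\delta_0)$, lets you transfer the Euclidean-type Sobolev inequality (with no lower-order term) and the at-all-scales volume growth from $g_0$ to $g_1:=g(\hat T)$. This does not work: a Sobolev inequality is a statement about the metric tensor (it sees $|\nabla u|_{g}$ and $d\mathrm{vol}_g$ pointwise), and bi-H\"older closeness of the distance functions gives essentially no pointwise comparison of the two metric tensors. Even as the exponent approaches $1$, a map whose distances are bi-H\"older with exponent near $1$ need not have bounded dilatation (think of $x\mapsto x^{1+\e}$ near the origin). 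Likewise, conclusion (3) of Theorem~\ref{thm:localRF} controls $\|\Rm(g(t))\|_{L^{n/2}}$ only on $g(t)$-balls of radius $1/2$, not globally, so ``keeps the global $L^{n/2}$-norm small'' does not follow directly either. You would really need a fresh Euclidean Sobolev inequality for $g_1$ to restart the argument, and that is exactly what fails to propagate. Your ``main obstacle'' paragraph gestures at the right ingredients (entropy monotonicity, scale-invariance), but does not resolve this.

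The paper takes a different route that sidesteps the transfer problem entirely. Instead of running a short flow to $g_1$ and trying to re-establish hypotheses on $g_1$, it observes that hypotheses (a)--(c) on $g_0$ are scale-invariant: a Euclidean-type Sobolev inequality, volume growth $\le v_0 r^n$ for all $r>0$, and the global $L^{n/2}$ curvature bound are all unchanged under $g_0\mapsto R^{-2}g_0$. Theorem~\ref{thm:short-time} therefore applies to $g_{i,0}:=R_i^{-2}g_0$ for any sequence $R_i\to+\infty$, producing (after undoing the scaling) Ricci flows on $M\times[0,R_i^2\hat T]$ with scale-invariant estimates. The crucial extra ingredient is a uniqueness theorem for Ricci flows with scaling-invariant estimates (\cite{Lee2025}), which shows these flows agree on overlapping time intervals and thus patch to a single immortal flow (Theorem~\ref{thm:immortal-RF}). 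The sharp global $L^{n/2}$ bound in conclusion (iii) there is then recovered by re-invoking Lemma~\ref{lma:local-total-pre} (not by conclusion (3) of the short-time theorem), and the decay $\sup_M|\Rm(g(t))|=o(1)/t$ comes from \cite{ChauMarten2024}. Finally, the diffeomorphism to $\mathbb{R}^n$ is not re-derived by a blow-down argument as you sketch; the paper cites the topological rigidity criterion from an immortal Ricci flow with such estimates, namely \cite{HuangPeng2025} (and compare \cite{HeLee2021}, \cite{Wang2020}). Your blow-down proposal is morally what those references do, but you would need to carry it out carefully; quoting them is both shorter and guaranteed correct.

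In short: the blind proposal fails at the point where you try to re-establish the Sobolev hypothesis on the smoothed metric via a bi-H\"older distance comparison. The fix, which is the paper's actual argument, is to never leave $g_0$: rescale the initial metric (allowed because the hypotheses are scale-invariant), run the short-time smoothing at every scale, and invoke uniqueness to consolidate. That removes the need to propagate any inequality across the flow, and the long-time estimates then feed directly into an off-the-shelf topological rigidity theorem.
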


The Sobolev condition plays a strong role in non-collapsing. 
It is also easily seen from the Eguchi-Hanson metric that smallness on $||\Rm||_{L^{n/2}}$ is necessary. 
It is, however, unclear if this is still necessary when $n$ is odd.

When the negative part of Ricci curvature has a stronger decay, the volume growth condition holds automatically by the work of Carron \cite{Carron2019}*{Theorem B}. 
It will be interesting to know if it is necessary to assume (b) a-priori in Theorem~\ref{intro:main}; 
cf. \cite{ChauMarten2024} for the case of bounded curvatures.

A natural collection to consider is the class of Euclidean submanifolds.
Based on the Sobolev inequality of Michael--Simon~\cite{MichaelSimon}, a complete Euclidean submanifold with small total curvature satisfies the Euclidean-type Sobolev inequality.
Thus, Theorem~\ref{intro:main} implies that such a submanifold is diffeomorphic to $\bb R^n$ if its volume growth is Euclidean.
Moreover, when the submanifold is a convex hypersurface in $\bb R^{n+1},$ one can remove the volume growth assumption and use the main theorem in \cite{ChanLee2024} to show that the hypersurface is isometric to $\bb R^n$ when it has small total curvature.
In general, in Theorem~\ref{intro:main}, when the scalar curvature is, furthermore, assumed to be non-negative, we expect to get a stronger rigidity result as in the Eucludean case.
See Remark~\ref{rem:positive-R}.

We organize the paper as follows.
Section~\ref{sec:RF-prel} provides the necessary a priori estimates that we will need in Section~\ref{sec:RF-exist}, in which the main local smoothing result (Theorem~\ref{thm:localRF}) is proven.
We prove the compactness theorem (Theorem~\ref{intro:main-2}) in Section~\ref{sec:compactness} and the topological gap theorem (Theorem~\ref{intro:main}) in Section~\ref{sec:gap}.

{\it Acknowledgment:} The authors would like to thank P.-Y. Chan for fruitful discussion. M.-C. Lee is  supported by Hong Kong RGC grant (Early Career Scheme) of Hong Kong No. 24304222 and No. 14300623, NSFC grant No. 12222122 and an Asian Young Scientist Fellowship.
T.-K. Lee thanks the support from Bill Minicozzi through the NSF Award DMS-2304684 and the support from the US Junior Oberwolfach Fellowship through the NSF Award DMS-2230648 when the project is carried out.

\section{\bf  A priori estimates of Ricci flow}\label{sec:RF-prel}

In this section, we will derive some a-priori estimates of the Ricci flow, which is related to the curvature concentration.

Along the Ricci flow, it is more common to consider the log-Sobolev constant, which is measured by the $\nu$-entropy. 
It is well-known that the $\nu$-entropy is monotone along a complete Ricci flow with bounded curvature thanks to the celebrated work of Perelman \cite{Perelman2002}; see also the complete non-compact case by Chau--Tam--Yu \cite{ChauTamYu2011}. 
We need a localized version by Wang \cite{Wang2018}. 
Following~\cite{Wang2018}, for a connected domain $\Omega$ with possibly empty boundary in $M$,  we define
\begin{equation*}
\left\{
\begin{array}{ll}
&D_g(\Omega):=\left\{u\in W^{1,2}_0(\Omega): u\geq 0 \text{  and  } \|u\|_{L^2(\Omega)}=1 \right\},\\[5mm]
&W(\Omega, g, u, \tau):=\displaystyle\int_{\Omega} \pr{\tau(\mathcal{R}\cdot u^2+4|\nabla u|^2)-2u^2\log u} \;   d\mathrm{vol}_g -\frac{n}{2}\log(4\pi\tau)-n,
\\[5mm]
&\nu(\Omega, g, \tau):= \inf\limits_{u\in D_g(\Omega), s \in (0, \tau]}W(\Omega, g, u, s)
\end{array}
\right.
\end{equation*}
where $\mathcal{R}$ denotes the scalar curvature.

The local entropy $\nu(\Omega,g,\tau)$ measures the optimal (weighed) log-Sobolev constant of $\Omega$ in scale of $\tau$ with respect to $g$. 
This also controls the Sobolev constant in some suitable scale by \cite{Martens2025}*{Lemma 3.3} which is built on \cite{Ye2015}. 
If $g(t)$ is a complete Ricci flow on $\Omega$ with bounded curvature, it is well-known that $\nu(\Omega,g(t),\tau-t)$ is monotonically non-decreasing in $t$ by the celebrated work of Perelman \cite{Perelman2002}.

Although the (almost) monotonicity of the (localized) $\nu$-entropy does not necessarily hold for incomplete local Ricci flow solutions, we will construct local solutions in a way that the entropy is still under control.

In this section, we will derive a priori  estimates, assuming a bound on the localized entropy. 
We first need a local persistence of local curvature concentration, which is an essential key to the semi-preservation of small curvature concentration, see \cite{ChanChenLee2022,ChanHuangLee2024,ChanLee2024,ChauMarten2024,ChauMarten2026,Martens2025,Martens2025-2,Chen2022}. We need the following localized version of \cite{ChanLee2024}*{Lemma 2.2}.

\begin{lma}\label{lma:local-total-pre}
For all $A,\a>0$, there exist $\delta_1(n,A)\in (0,1)$, $\tilde T(n,\a),C_1(n)>0$ such that the following is true. 
Suppose $g(t)$ is a smooth solution to the Ricci flow on $M\times [0,T]$ and $x_0\in M$ such that for all $t\in [0,T]$,
\begin{enumerate}
\item[(a)] $B_{g(t)}(x_0,2)\Subset M$; 
\item[(b)] $||\Rm(g(t))||_{L^{n/2}(B_{g(t)}(x_0,2),g(t))}<\delta_0$ for some $\delta_0\leq \delta_1$;
\item[(c)] $\nu(B_{g(t)}(x_0,2),g(t),1)\geq -A$;
\item[(d)] $\Ric(g(t))\leq \a (n-1)t^{-1}$.
\end{enumerate}
Then for all $t\in [0,T\wedge\tilde T]$, we have
\begin{equation*}
\int_{B_{g(t)}(x_0,1)}|\Rm(g(t))|^{n/2}\, d\mathrm{vol}_{g(t)} \leq \int_{B_{g_0}(x_0,2)}|\Rm(g_0)|^{n/2}\, d\mathrm{vol}_{g_0}+ C_1\delta_0^{n/2} t.
\end{equation*}
\end{lma}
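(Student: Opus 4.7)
The plan is to follow a Moser-type energy argument on $|\Rm|^{n/2}$ adapted to a moving cutoff. Fix $\eta\in C_c^\infty([0,2))$ with $\eta\equiv 1$ on $[0,1]$ and $|\eta'|+|\eta''|$ bounded, and set
$$\phi_t(x):=\eta(d_{g(t)}(x,x_0)),\qquad F(t):=\int_M |\Rm(g(t))|^{n/2}\,\phi_t^2\,d\mathrm{vol}_{g(t)}.$$
Then $\phi_t\equiv 1$ on $B_{g(t)}(x_0,1)$ and $\supp\phi_t\Subset B_{g(t)}(x_0,2)$, so $F(0)\leq \int_{B_{g_0}(x_0,2)}|\Rm(g_0)|^{n/2}$, $F(t)\geq \int_{B_{g(t)}(x_0,1)}|\Rm(g(t))|^{n/2}$, and hypotheses (a)--(d) apply throughout $\supp\phi_t$. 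The standard Simons--Bochner evolution, combined with Kato's inequality, yields
$$(\partial_t-\Delta)|\Rm|^{n/2}\leq C(n)|\Rm|^{n/2+1}-c(n)|\nabla|\Rm|^{n/4}|^2.$$
Differentiating $F$, integrating by parts in the Laplacian term, using $\partial_t\,d\mathrm{vol}_{g(t)}=-\mathcal{R}\,d\mathrm{vol}_{g(t)}$ with $|\mathcal{R}|\leq C(n)|\Rm|$, and absorbing cross terms gives
$$F'(t)\leq -\tfrac{c(n)}{2}\int|\nabla|\Rm|^{n/4}|^2\phi_t^2 + C(n)\int|\Rm|^{n/2+1}\phi_t^2 + C(n)\int|\Rm|^{n/2}\bigl(|\nabla\phi_t|_{g(t)}^2+|\partial_t\phi_t^2|\bigr).$$

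To dispose of the cubic term, the $\nu$-entropy lower bound (c) supplies a local $L^2$-Sobolev inequality with constant $C_S=C_S(n,A)$ on $B_{g(t)}(x_0,2)$ via Ye/Martens. Applying Sobolev to $|\Rm|^{n/4}\phi_t$ and splitting by H\"older with conjugate pair $(n/2,n/(n-2))$ gives
$$\int|\Rm|^{n/2+1}\phi_t^2 \leq \Bigl(\int_{\supp\phi_t}|\Rm|^{n/2}\Bigr)^{2/n} C_S \int\Bigl(|\nabla|\Rm|^{n/4}|^2\phi_t^2 + |\Rm|^{n/2}(|\nabla\phi_t|^2+\phi_t^2)\Bigr).$$
Hypothesis (b) bounds the first factor by $\delta_0$; choosing $\delta_1(n,A)$ small so that $C(n)C_S\delta_1\leq c(n)/4$ swallows the gradient piece into the good-sign term, leaving
$$F'(t)\leq C(n,A)\int_{\supp\phi_t}|\Rm|^{n/2}\bigl(\phi_t^2+|\nabla\phi_t|^2+|\partial_t\phi_t^2|\bigr) \leq C(n,A)\delta_0^{n/2}\bigl(1+\sup|\partial_t\phi_t^2|\bigr).$$

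It remains to bound $\partial_t\phi_t$. Since $d_{g(t)}(\cdot,x_0)$ is only Lipschitz, $\phi_t$ is a priori nonsmooth; the computation above is to be performed after approximating $d_{g(t)}$ by smooth barriers (Greene--Wu style) or in the viscosity sense. The key analytic input is Perelman's distance distortion lemma, which requires only a one-sided Ricci bound: under hypothesis (d), for any $y\in\supp\phi_t$, optimizing the auxiliary radius $r_0$ in the distortion estimate with $K=\alpha(n-1)/t$ produces $|\partial_t d_{g(t)}(y,x_0)|\leq C(n)\sqrt{\alpha/t}$, valid provided $\sqrt{t/\alpha}$ is so small that the auxiliary balls remain in $B_{g(t)}(x_0,2)$. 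Integrating the resulting differential inequality $F'(t)\leq C(n,A,\alpha)\delta_0^{n/2}(1+t^{-1/2})$ from $0$ to $t$ and choosing $\tilde T(n,\alpha)$ small enough to absorb the $\sqrt{t}$ contribution into a linear-in-$t$ term yields the stated bound.

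The chief obstacle is precisely this last step: without any Ricci lower bound the standard two-sided distance distortion is unavailable, so one must rely on Perelman's one-sided barrier, which is the only place hypothesis (d) enters. The unavoidable $t^{-1/2}$ singularity it produces is exactly what forces $\tilde T$ in the conclusion to depend on $\alpha$.
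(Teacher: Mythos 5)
Your energy-method skeleton is sound — the Bochner–Kato evolution of $|\Rm|^{n/2}$, the treatment of the cubic term via the $\nu$-entropy $\Rightarrow$ local Sobolev inequality and H\"older, and the choice of $\delta_1$ to absorb the gradient term all match what the paper does. There are two problems, one minor and one fatal, both concentrated in the handling of the time-dependent cutoff.

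The minor point: Perelman's distance distortion estimate under a Ricci \emph{upper} bound is genuinely one-sided, $\partial_t d_{g(t)}(y,x_0)\geq -C(n)\sqrt{\alpha/t}$, not a two-sided bound $|\partial_t d|\leq C\sqrt{\alpha/t}$ as you wrote. This particular inaccuracy happens to be repairable: since $\eta'\leq 0$, the dangerous sign in $\partial_t(\phi_t^2)=2\phi_t\eta'(d_{g(t)})\,\partial_t d_{g(t)}$ occurs only when $\partial_t d<0$, and precisely that direction is the one Perelman controls. So the one-sidedness does not kill your argument.

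The fatal point: even granting the bound $\partial_t(\phi_t^2)\leq C\sqrt{\alpha/t}\,\phi_t$, the resulting differential inequality $F'(t)\leq C\delta_0^{n/2}\bigl(1+\sqrt{\alpha/t}\bigr)$ integrates to $F(t)\leq F(0)+C\delta_0^{n/2}\bigl(t+2\sqrt{\alpha t}\bigr)$, and the $\sqrt{t}$ term \emph{cannot} be absorbed into a linear-in-$t$ term by shrinking $\tilde T$: the ratio $\sqrt{t}/t = t^{-1/2}$ is unbounded as $t\to 0$, and shrinking $\tilde T$ only makes it worse. Your static cutoff $\phi_t=\eta(d_{g(t)})$ therefore proves a genuinely weaker statement with remainder $O(\sqrt{t})$, not the claimed $O(t)$. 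The paper sidesteps this by using a Perelman-shifted cutoff $\phi(x,t)=e^{-10^3 t}\varphi\bigl(d_{g(t)}(x,x_0)+2(\alpha+1)(n-1)\sqrt t\,\bigr)$, chosen so that $(\partial_t-\Delta_{g(t)})\phi\leq 0$ in the barrier sense for $t\leq\tilde T(n,\alpha)$: the added $\sqrt t$ shift exactly cancels the $t^{-1/2}$-singular part of $\partial_t d_{g(t)}$, and then only the non-singular term $\int\phi\,|\Rm|^{n/2}\leq\delta_0^{n/2}$ survives from the cutoff, yielding $\tfrac{d}{dt}\int\phi^2|\Rm|^{n/2}\leq C_1\delta_0^{n/2}$ and hence the clean linear-in-$t$ bound. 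To fix your proof you would need to replace $\eta(d_{g(t)})$ with such a heat-supersolution barrier rather than trying to control $\partial_t\phi_t$ pointwise.
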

\begin{proof}
Let $\varphi(s)$ be a smooth non-increasing function on $[0,+\infty)$ such that $\varphi\equiv 1$ on $[0,1]$, vanishes outside $[0,2],$ and satisfies $|\varphi'|^2\leq 10^3\varphi$ and $\varphi''\geq -10^3\varphi$. 
Let 
$$\eta(x,t):=d_{g(t)}(x,x_0)+2(\a+1)(n-1) \sqrt{t}$$ 
and define $\phi(x,t):=e^{-10^3t}\varphi\left(\eta(x,t) \right)$ for $x\in B_{g(t)}(x_0,2)$ and $t\in [0,T]$. 
It follows from \cite{Perelman2002}*{Lemma 8.3} that there exists $\tilde T=\tilde T(n,\a)>0$ such that 
\begin{equation*}
\heat \phi \leq 0
\end{equation*}
for $t\in [0,T\wedge \tilde T]$, in the barrier sense; see also \cite{SimonTopping2022}*{Lemma 7.1} for a detailed discussion. 
The inequality also holds in the distribution sense by \cite{MantegazzaMascellaniUraltsev2014}*{Appendix A}. 

Furthermore, $|\nabla\phi|^2\leq 10^3 \phi,$ so that \cite{ChanChenLee2022}*{Lemma 2.1} implies 
\begin{equation*}
\begin{split}
\frac{d}{dt} \int_M \phi^2 |\Rm|^{n/2}\,d\mathrm{vol}_{g(t)}&\leq -C_0^{-1}  \int_M |\nabla \left(\phi |\Rm|^{n/4} \right)|^2  d\mathrm{vol}_{g(t)}\\
&\quad +C_0\int_M \phi^2 |\Rm|^{n/2+1} \,d\mathrm{vol}_{g(t)}\\
&\quad +C_0\int_M \phi |\Rm|^{n/2} \,d\mathrm{vol}_{g(t)}\\
&:=-\mathbf{I}+\mathbf{II}+\mathbf{III}
\end{split}
\end{equation*}
for some dimensional constant $C_0(n)>0$. We note that the evolution equation in \cite{ChanChenLee2022}*{Lemma 2.1} is purely local, and thus the completeness of the solution there is not necessary.

Since $\phi(\cdot,t)$ is compactly supported on $B_{g(t)}(x_0,2)$ for each $t\in [0,T]$, 
\begin{equation*}
\mathbf{III}\leq C_0\delta_0^{n/2}.
\end{equation*}
Similarly, together with H\"older's inequality, we have
\begin{equation*}
\begin{split}
\mathbf{II}&\leq C_0||\Rm||_{L^{n/2}(B_{g(t)}(x_0,2))}\cdot \left( \int_M \phi^\frac{2n}{n-2}|\Rm|^\frac{n^2}{2(n-2)}\,d\mathrm{vol}_{g(t)}\right)^\frac{n-2}{n}\\
&\leq C_0\delta_0 \cdot \left( \int_M \phi^\frac{2n}{n-2}|\Rm|^\frac{n^2}{2(n-2)}\,d\mathrm{vol}_{g(t)}\right)^\frac{n-2}{n}.
\end{split} 
\end{equation*}

It remains to handle $\mathbf{I}$. For any $t\in [0,T\wedge \tilde T]$, we apply \cite{Martens2025}*{Lemma 3.3(2)} with $\Omega=B_{g(t)}(x_0,2)$, $g=g(t)$ and $\tau=1$ to deduce that for some $\Lambda_0(n,A)>0$, we have 
\begin{equation}\label{eqn:Sobo}
\left(\int_{B_{g(t)}(x_0,2)} |u|^\frac{2n}{n-2}\,d\mathrm{vol}_{g(t)} \right)^\frac{n-2}{n}\leq  \Lambda_0  \int_{B_{g(t)}(x_0,2)} \left(|\nabla u|^2+ u^2\right)  \,d\mathrm{vol}_{g(t)} 
\end{equation}
for all $u\in W^{1,2}_0(B_{g(t)}(x_0,2))$, provided that $\delta_0\leq \delta_1$ and $\delta_1$ is small enough depending on $A.$ 
Applying \eqref{eqn:Sobo} to $u:=\phi |\Rm|^{n/4}$ yields 
\begin{equation*}
\begin{split}
\mathbf{I}&\geq (C_0\Lambda_0)^{-1} \left( \int_M \phi^\frac{2n}{n-2}|\Rm|^\frac{n^2}{2(n-2)}\,d\mathrm{vol}_{g(t)}\right)^\frac{n-2}{n}-C_0^{-1}\int_M \phi^2 |\Rm|^{n/2}\,d\mathrm{vol}_{g(t)}.
\end{split}
\end{equation*}

Combining the estimates above, we conclude that if $\delta_0\leq \delta_1\leq (C_0^2 \Lambda_0)^{-1}$, then 
\begin{equation*}
\begin{split}
&\quad \frac{d}{dt} \int_M \phi^2 |\Rm|^{n/2}\,d\mathrm{vol}_{g(t)}\\
&\leq (C_0+C_0^{-1}) \delta_0^{n/2}+\left( C_0\delta_0-(C_0\Lambda_0)^{-1} \right) \cdot \left( \int_M \phi^\frac{2n}{n-2}|\Rm|^\frac{n^2}{2(n-2)}\,d\mathrm{vol}_{g(t)}\right)^\frac{n-2}{n}\\
&\leq C_1(n)\delta_0^{n/2}.
\end{split}
\end{equation*}
The assertion follows from integrating this from the initial time slice. 
This completes the proof.
\end{proof}

\begin{rem}\label{rem:size-delta1}
Here we implicitly assume $\delta_1$ is small enough so that assumption in \cite{Martens2025}*{(2), Lemma 3.3} is fulfilled.
\end{rem}

\bigskip

We now show that if we have a priori scaling invariant curvature decay in time, then the assumption (b) in Lemma~\ref{lma:local-total-pre} will be satisfied, provided that the initial metric is quantitatively Ahlfors $n$-regular. 

\begin{prop}\label{prop:improve-local-total-pre}
For all $A,v_0>1$, there exist $\tilde T(n)$, $\delta_0(n,v_0,A)>0$ and $\a_0(n,v_0,A) \in (0,1)$ such that the following holds. 
Suppose $g(t)$ is a smooth solution to the Ricci flow on $M\times [0,T]$ and $x_0\in M$ such that for all $t\in [0,T]$, we have
\begin{enumerate}
\item[(a)] $B_{g(t)}(x_0,2)\Subset M$;
\item[(b)] $|\Rm(g(t))|\leq \a_0 t^{-1}$ on $B_{g(t)}(x_0,2)$;
\item [(c)]
$ \nu\left(B_{g(t)}(x_0,2),g(t),1\right)\geq  -A$;
\item[(d)] $\mathrm{Vol}_{g_0}\left( B_{g_0}(x,r)\right)\leq v_0r^n$ for $x\in B_{g_0}(x_0,1)$ and $r\in (0,1]$;
\item[(e)] $||\Rm(g_0)||_{L^{n/2}(B_{g_0}(x_0,2))}<\delta_0$.
\end{enumerate}
Then for all $t\in [0,T\wedge \tilde T]$,
\begin{equation*}
\left(\int_{B_{g(t)}\pr{x_0,1}} |\Rm(g(t))|^{n/2} \,d\mathrm{vol}_{g(t)} \right)^{2/n}\leq \delta_1
\end{equation*}
where $\delta_1(n,A)$ is the constant from Lemma~\ref{lma:local-total-pre}.
\end{prop}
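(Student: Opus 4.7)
My plan is to reduce Proposition~\ref{prop:improve-local-total-pre} to Lemma~\ref{lma:local-total-pre} via a continuity (bootstrap) argument. The principal obstruction to invoking Lemma~\ref{lma:local-total-pre} directly is that its hypothesis (b) requires $L^{n/2}$ curvature smallness on $B_{g(t)}(x_0,2)$ at \emph{every} time, whereas assumption (e) of the proposition supplies this only at $t=0$.

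With $\delta_1=\delta_1(n,A)$ from Lemma~\ref{lma:local-total-pre}, I would choose $\delta_0\leq\delta_1/2$ and, for a small $\tilde T=\tilde T(n)$ to be specified, define
\[
T^*:=\sup\Bigl\{t\in[0,T\wedge \tilde T]\,:\,\|\Rm(g(s))\|_{L^{n/2}(B_{g(s)}(x_0,2))}\leq\delta_1 \text{ for all } s\in[0,t]\Bigr\}.
\]
Smoothness of the flow and the initial bound below $\delta_0^{n/2}$ give $T^*>0$ by continuity. On $[0,T^*]$ the hypotheses (b)--(d) of Lemma~\ref{lma:local-total-pre} are met: (b) by the definition of $T^*$; (c) by the proposition's (c); and (d) with $\alpha=\alpha_0$ from the pointwise bound $|\Rm|\leq\alpha_0/t$ (which implies $\Ric\leq(n-1)\alpha_0/t$). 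Lemma~\ref{lma:local-total-pre} then yields
\[
\int_{B_{g(t)}(x_0,1)}|\Rm(g(t))|^{n/2}\,d\mathrm{vol}_{g(t)}\leq\delta_0^{n/2}+C_1\delta_1^{n/2}t\leq\delta_1^{n/2}
\]
after choosing $\delta_0$ and $\tilde T$ small in terms of $\delta_1$. This is precisely the desired conclusion on $[0,T^*]$.

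It remains to close the bootstrap, i.e.~to show $T^*=T\wedge\tilde T$. Suppose otherwise; then by continuity $\|\Rm(g(T^*))\|_{L^{n/2}(B_{g(T^*)}(x_0,2))}=\delta_1$. I would refute this by a direct estimate combining Perelman's parabolic distance estimate~\cite{Perelman2002}*{Lemma 8.3} applied to $\Ric\leq(n-1)\alpha_0/t$ (giving $B_{g(t)}(x_0,2)\subset B_{g_0}(x_0,2+C(n)\sqrt{\alpha_0 t})$), the Ahlfors regularity (d) to bound $g_0$-volumes on the relevant scales, the entropy bound (c) to secure volume comparability between $g(t)$ and $g_0$, and the pointwise bound $|\Rm|^{n/2}\leq(\alpha_0/t)^{n/2}$. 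Decomposing $B_{g(T^*)}(x_0,2)$ into its intersection with $B_{g_0}(x_0,2)$ (where the integral is close to $\delta_0^{n/2}$ via (e) and the flow's measure evolution) and the annular remainder (bounded by the pointwise bound times the annular $g(T^*)$-volume), a suitable calibration of $\delta_0,\alpha_0,\tilde T$ forces the total strictly below $\delta_1^{n/2}$, contradicting the equality at $T^*$. The main obstacle is to control the annular contribution uniformly for small $t$: the pointwise bound carries a $t^{-n/2}$ singularity at $t=0$, while the annular width is only $\sim\sqrt{\alpha_0 t}$, so the parabolic $\sqrt{t}$ scaling of Perelman's estimate together with Ricci-flow non-inflating from the entropy is essential to absorb this singularity.
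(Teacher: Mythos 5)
Your continuity-argument strategy is a genuinely different route from the paper's, but it has a gap that I do not think can be closed as you have set it up. The issue is the radius-halving built into Lemma~\ref{lma:local-total-pre}: its output is an $L^{n/2}$ bound on $B_{g(t)}(x_0,1)$, whereas its hypothesis (b) — and hence your bootstrap quantity — lives on $B_{g(t)}(x_0,2)$. So on $[0,T^*]$ you recover the bound on $B_{g(t)}(x_0,1)$, but this never feeds back into the definition of $T^*$, which is pinned to the larger ball. The bootstrap is therefore not self-improving; there is nothing forcing $T^*=T\wedge\tilde T$.

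Your attempt to close it by a direct estimate at $T^*$ does not work either, for two reasons. First, on $B_{g_0}(x_0,2)\cap B_{g(T^*)}(x_0,2)$ there is no reason for $\int|\Rm(g(T^*))|^{n/2}$ to stay near $\delta_0^{n/2}$: the integrand, not just the measure, evolves under the flow, and controlling precisely this growth is the entire content of Lemma~\ref{lma:local-total-pre}, which only gives you that control on the smaller ball $B_{g(t)}(x_0,1)$ (to control it on $B_{g_0}(x_0,2)$ you would need a hypothesis at radius $4$). Second, the ``annular remainder'' $B_{g(T^*)}(x_0,2)\setminus B_{g_0}(x_0,2)$ is a thin shell located at distance $\sim 2$ from $x_0$; Perelman's estimate gives its thickness as $\sim\sqrt{\alpha_0 T^*}$, but its ($g_0$- or $g(T^*)$-) volume is then on the order of area-times-thickness, i.e.\ $\sim\sqrt{\alpha_0 T^*}$, not $(\sqrt{\alpha_0 T^*})^n$. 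Paired with the pointwise bound $(\alpha_0/T^*)^{n/2}$, the contribution is of order $\alpha_0^{(n+1)/2}(T^*)^{(1-n)/2}$, which \emph{diverges} as $T^*\to 0$ for $n\geq 3$ regardless of how small $\alpha_0$ is. The parabolic scaling you invoke is designed for balls centered at the point in question, not for thin shells at an $O(1)$ distance, and the entropy lower bound gives non-collapsing (a volume \emph{lower} bound), not an upper bound that would shrink the shell's volume.

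The paper avoids the scale mismatch entirely by making the argument scale-invariant: it sets $\rho(x,t):=\tfrac12\bigl(1-d_{g(t)}(x,x_0)\bigr)$ and proves the weighted smallness $\|\Rm(g(t))\|_{L^{n/2}(B_{g(t)}(x,\rho(x,t)))}<\delta_1$ for all interior $x$ by contradiction. At a hypothetical first failure $(x_1,t_1)$, a lower bound $\rho_1\geq\beta\sqrt{t_1}$ is established (Claim~\ref{claim:lower-bdd}, via the $\alpha_0/t$ curvature bound and Bishop--Gromov), the flow is rescaled by $8/\rho_1$, Lemma~\ref{lma:local-total-pre} is applied at the rescaled unit scale, and then $B_{g(t_1)}(x_1,\rho_1)$ is covered by $N\leq N_0(n,A,v_0)$ balls of radius $\rho_1/8$ using Ahlfors $n$-regularity and entropy non-collapsing (Claim~\ref{claim:cover-num}). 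This yields $\delta_1^{n/2}\leq N_0(\delta_0^{n/2}+C\delta_1^{n/2}\rho_1^{-2}t_1)$, which contradicts $\rho_1\geq\beta\sqrt{t_1}$ once $\delta_0$ and $\alpha_0$ are chosen small. If you want to salvage a continuity-method approach, you would need to promote your bootstrap quantity to something scale-covariant (like $\sup_x\|\Rm\|_{L^{n/2}(B_{g(t)}(x,\rho(x,t)))}$) rather than fixing a single radius; at that point you essentially reproduce the paper's argument.
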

\begin{proof}
In the proof, we will use $C_i$ to denote any dimensional constant. For $t\in [0,T]$ and $x\in B_{g(t)}(x_0,1)$, we define $ \rho(x,t):= \frac23 \left(\frac32-d_{g(t)}(x,x_0)\right),$ which is $ \frac23$ of the $g(t)$-distance from $x$ to the boundary of $ B_{g(t)}(x_0,\frac32)$.

Let $\delta_0\in (0,\delta_1)$ be a constant to be specified. 
We claim that 
\begin{equation}\label{eqn:pt-picking}
\left(\int_{B_{g(t)}(x,\rho(x,t))} |\Rm(g(t))|^{n/2}\,d\mathrm{vol}_{g(t)}\right)^{2/n}<\delta_1
\end{equation}
for all $ x\in B_{g(t)}(x_0,\frac32)$ and $t\in [0,T]$ { if $T$ is smaller than a dimensional constant}.

By \cite{SimonTopping2022}*{Corollary 3.3}, for some $\tilde T(n)>0,$ $ B_{g(t)}(x_0,\frac32)\Subset B_{g_0}(x_0,2)$ for $t\in [0,T\wedge \tilde T]$ and  $\rho(x,t)\to 0$ as $ x\to \partial B_{g(t)}(x_0,\frac32)$. 
Therefore, if the claim \eqref{eqn:pt-picking} is not true, then there exists $t_1\in (0,T\wedge \tilde T]$ such that 
\begin{enumerate}
\item[(i)] 
$||\Rm(g(t))||_{L^{n/2}(B_{g(t)}(x,\rho(x,t)))}<\delta_1$ for all $t\in [0,t_1)$ and $ x\in B_{g(t)}(x_0,\frac32)$;
\item[(ii)]
$||\Rm(g(t_1))||_{L^{n/2}(B_{g(t_1)}(x_1,\rho(x_1,t_1)))}=\delta_1$ for some  $ x_1\in B_{g(t_1)}(x_0,\frac32 )$.
\end{enumerate}

\bigskip

We denote $ \rho_1:=\rho(x_1,t_1)\leq 1$ for convenience.

\begin{claim}\label{claim:lower-bdd}
There exists $C_0(n)>1$ such that for any $\b>1$, if $\a_0\leq C_0^{-2}\b^{-2}\delta_1$, then $\rho_1\geq \b \sqrt{t_1}$.
\end{claim}
\begin{proof}[Proof of Claim~\ref{claim:lower-bdd}]
Given $\b>1$, we show that $\rho_1\geq \b\sqrt{t_1}$ if $\a_0$ is sufficiently small. 
Suppose this is not the case, that is, $\rho_1\leq \b\sqrt{t_1}$.  
By (ii) and the assumption on curvature estimates, 
\begin{equation*}
\begin{split}
\delta_1^{n/2}&\leq (\a t_1^{-1})^{n/2} \cdot \mathrm{Vol}_{g(t_1)}\left(B_{g(t_1)}(x_1,\rho_1) \right)\\
&\leq (\a t_1^{-1})^{n/2} \cdot \mathrm{Vol}_{g(t_1)}\left(B_{g(t_1)}(x_1,\b \sqrt{t_1}) \right).
\end{split}
\end{equation*}
By the volume comparison using $\Ric(g(t))\geq -(n-1)\a t^{-1}$  and a simple scaling argument, we can estimate the volume from above as long as $\a_0\b^2\leq 1$, so that
\begin{equation*}
\begin{split}
\delta_1^{n/2}\leq \pr{\frac12C_0\a_0^{1/2}\b}^n,
\end{split}
\end{equation*}
which is impossible if $\a_0\leq C_0^{-2}\b^{-2}\delta_1$.
\end{proof}

On the other hand, for all $x\in B_{g(t_1)}(x_1,\rho_1)$ and $t\in [0,t_1]$,
\begin{equation*} 
\begin{split}
\frac32\rho(x,t)&=\frac32-d_{g(t)}(x_0,x)\\
&\geq \frac32-d_{g(t)}(x_0,x_1)-d_{g(t)}(x_1,x)\\
&\geq \frac32-d_{g(t_1)}(x_0,x_1)-d_{g(t_1)}(x_1,x) -C_1\sqrt{\a_0 t_1}\\
&\ge \rho_1-C_1\sqrt{t_1}
\end{split}
\end{equation*}
for some $C_1>1$, where we have used \cite{SimonTopping2022}*{Corollary 3.3}. 
Hence, by Claim~\ref{claim:lower-bdd}, we might assume $\rho(x,t)\geq \frac14 \rho_1$  for all $x\in B_{g(t_1)}(x_1,\rho_1)$ and $t\in [0,t_1]$, if $ \a_0\leq (4C_0C_1)^{-2}\delta_1$. 
Therefore, (i) implies
\begin{equation}\label{eqn:pt-pick-2}
\int_{B_{g(t)}\pr{x,\frac14\rho_1}} |\Rm(g(t))|^{n/2}\,d\mathrm{vol}_{g(t)} \leq \delta_1^{n/2}
\end{equation} 
for all $t\in [0,t_1]$ and $x\in B_{g(t_1)}(x_1,\rho_1)$. 

\bigskip

Fix $z\in  B_{g(t_1)}(x_1,\rho_1)$ and  consider the rescaled Ricci flow $\tilde g(t):=\lambda^2 g(\lambda^{-2}t),t\in [0,\lambda^2 t_1]$ where $\lambda  =8  \rho_1^{-1}$. 
By \eqref{eqn:pt-pick-2} and the assumptions, for $t\in [0,\lambda^2 t_1],$ the rescaled flow satisfies 
\begin{itemize}
\item $B_{\tilde g(t)}(z,2)=B_{g(\lambda^{-2}t)}(z,\frac14\rho_1)\Subset M$;
\item $||\Rm(\tilde g(t))||_{L^{n/2}(B_{\tilde g(t)}(z,2))}<\delta_1$;
\item $\nu\left(B_{\tilde g(t)}(z,2),\tilde g(t),1 \right)\geq -A;$
\item $|\Rm(\tilde g(t))|\leq \a_0 t^{-1}$.
\end{itemize}
Here we have used the monotonicity on the scale $\tau$ in $\nu$ entropy from \cite{Wang2018}*{Proposition 3.2} and $\lambda^{-2}=8^{-2}\rho_1^2\leq 1$. 

We apply Lemma~\ref{lma:local-total-pre} to $\tilde g(t)$ to conclude that 
\begin{equation*}
\begin{split}
\int_{B_{\tilde g(t)}(z,1)}|\Rm(\tilde g(t))|^{n/2}\, d\mathrm{vol}_{\tilde g(t)} 
&\leq \int_{B_{\tilde g_0}(z,2)}|\Rm(\tilde g_0)|^{n/2}\, d\mathrm{vol}_{\tilde g_0}+ C_2\delta_1^{n/2} t\\
&\leq \delta_0^{n/2}+ C_2\delta_1^{n/2} t
\end{split}
\end{equation*}
for all $t\in [0,\lambda^2t_1 \wedge \tilde T(n)]$. 
Rescaling it back, we conclude that 
\begin{equation}\label{eqn:local-cur-conc}
\begin{split}
\int_{B_{g(s)}(z,\frac18\rho_1)} |\Rm(g(s))|^{n/2}\,d\mathrm{vol}_{g(s)} &\leq \delta_0^{n/2}+ 64C_2\delta_1^{n/2} \rho_1^{-2} s\\
&=: \delta_0^{n/2}+ C_3\delta_1^{n/2} \rho_1^{-2} s
\end{split}
\end{equation} 
for all $s\in [0,t_1\wedge 8^{-2}\tilde T \rho_1^2]$ and $z\in B_{g(t_1)}(x_1,\rho_1)$. 
By Claim~\ref{claim:lower-bdd}, if 
$$ \a_0\leq \min\set{(8C_0)^{-2}\tilde T\delta_1, (4C_1C_0)^{-2}\delta_1},$$ 
then \eqref{eqn:local-cur-conc} holds at $s=t_1$.
\bigskip

We now use \eqref{eqn:local-cur-conc} to contradict (ii), the equality at $(x_1,t_1)$. 
By choosing a maximal collection of $\{z_i\}_{i=1}^N$ in $B_{g(t_1)}\pr{x_1,\frac{39}{40}\rho_1}$ such that $B_{g(t_1)}\left(z_i,\frac1{40}\rho_1\right)$ are pair-wise disjoint, we have a collection of points $\{z_i\}_{i=1}^N$ such that
\begin{equation}\label{eqn:net}
\coprod_{i=1}^N B_{g(t_1)}\left(z_i,\frac1{40}\rho_1\right)\subseteq B_{g(t_1)}(x_1,\rho_1) \subseteq  \bigcup_{i=1}^N B_{g(t_1)}\left(z_i,\frac18\rho_1\right).
\end{equation}
Then (ii) and \eqref{eqn:local-cur-conc} imply
\begin{equation}\label{eqn:bound-delta_1-delta_0}
\begin{split}
\delta_1^{n/2}&\leq N \left( \delta_0^{n/2}+ C_3\delta_1^{n/2} \rho_1^{-2}t_1  \right).
\end{split}
\end{equation}

\begin{claim}\label{claim:cover-num}
There exist $\gamma_1(n,A),N_0(n,A,v_0)>0$ such that if $\a_0\leq \gamma_1\delta_1$, then $N\leq N_0$.
\end{claim}

\begin{proof}[Proof of Claim~\ref{claim:cover-num}]
This follows from \cite[Lemma 2.1]{Martens2025-2}; see also the proof of \cite[Theorem 1.2]{ChanChenLee2022}. 
For readers' convenience, we include its proof.

By \cite{SimonTopping2022}*{Corollary 3.3} and Claim~\ref{claim:lower-bdd}, we might assume $B_{g(t_1)}(x_1,\rho_1)\subseteq B_{g_0}(x_1,2\rho_1)$ and $B_{g(t_1)}\left(z_i,\frac1{40}\rho_1\right)\sbst B_{g(s)}(z_i, \rho_1/4)$ for $s\in[0,t_1]$. 
From \eqref{eqn:net} and the assumption of Ahlfors $n$-regularity, we have
\begin{equation}\label{eqn:covering}
\begin{split}
\sum_{i=1}^N \mathrm{Vol}_{g_0}\left(B_{g(t_1)}\left(z_i,\frac1{40}\rho_1\right) \right)
&\leq \mathrm{Vol}_{g_0}\left(B_{g(t_1)}\left(x_1,\rho_1\right) \right)\\
&\leq  \mathrm{Vol}_{g_0}\left(B_{g_0}\left(x_1,2\rho_1\right) \right)\leq v_0(2\rho_1)^n.
\end{split}
\end{equation}

It remains to control each $\mathrm{Vol}_{g_0}\left(B_{g(t_1)}\left(z_i,\frac1{40}\rho_1\right) \right)$ from below. We compare it with volume at $t=t_1$ as follows.  
For $s\in [0,t_1]$ and any $i,$
\begin{equation*}
\begin{split}
&\quad \frac{d}{ds}\mathrm{Vol}_{g(s)}\left(B_{g(t_1)}\left(z_i,\frac1{40}\rho_1\right) \right)\\
&=-\int_{B_{g(t_1)}\left(z_i,\frac1{40}\rho_1\right)}\mathcal{R}(g(s)) \,d\mathrm{vol}_{g(s)}\\
&\leq \left(\int_{B_{g(t_1)}\left(z_i,\frac1{40}\rho_1\right)}|\Rm(g(s))|^{\frac{n}{2}} \,d\mathrm{vol}_{g(s)}\right)^{2/n}\left[\mathrm{Vol}_{g(s)}\left(B_{g(t_1)}\left(z_i,\frac1{40}\rho_1\right) \right) \right]^{1-\frac2n}\\
&\leq \delta_1 \left[\mathrm{Vol}_{g(s)}\left(B_{g(t_1)}\left(z_i,\frac1{40}\rho_1\right) \right) \right]^{1-\frac2n}
\end{split}
\end{equation*}
where we have used \eqref{eqn:pt-pick-2} and $z_i\in B_{g(t_1)}(x_1,\rho_1)$. 

By integrating it from $s=0$ to $s=t_1$, we conclude that 
\begin{equation}\label{eqn:vol-low}
\begin{split}
\left[\mathrm{Vol}_{g(t_1)}\left(B_{g(t_1)}\left(z_i,\frac1{40}\rho_1\right) \right)\right]^{2/n}
&\leq \left[\mathrm{Vol}_{g_0}\left(B_{g(t_1)}\left(z_i,\frac1{40}\rho_1\right) \right)\right]^{2/n} +\frac{2\delta_1}n t_1.
\end{split}
\end{equation}

On the other hand, by assumption (c) and the monotonicity of the $\nu$-entropy (in terms of domains) \cite{Wang2018}*{Propositions 2.1}, $\nu(B_{g(t_1)}(z_i,\frac1{40} \rho_1, g(t_1),1)\geq -A.$ 
Thus, \eqref{eqn:pt-pick-2}, \cite{Martens2025}*{Lemmas 3.1 \& 3.3}, and Remark~\ref{rem:size-delta1} imply 
\begin{equation}\label{eqn:vole-low}
\mathrm{Vol}_{g(t_1)}\left(B_{g(t_1)}\left(z_i,\frac1{40}\rho_1\right) \right)\geq \gamma_0\cdot \rho_1^n
\end{equation}
for some $\gamma_0(n,A)>0$. 

By Claim~\ref{claim:lower-bdd} with $\b=2\gamma_0^{-1/n}$, then we have
$$\gamma_0^{2/n} \rho_1^2\geq  4 t_1> \frac{4}{n} \delta_1 t_1$$
provided that 
$\a_0
\leq C_0^{-2}\delta_1\cdot \min\set{2\gamma_0^{-1/n}, 8^{-2}\tilde T}^{ -2}
=:\gamma_1(n,A) \cdot \delta_1$. 
Using this with \eqref{eqn:vole-low} and \eqref{eqn:vol-low}, we estimate the left hand side in \eqref{eqn:covering} by
\begin{equation*}
\begin{split}
\sum_{i=1}^N \mathrm{Vol}_{g_0}\left(B_{g(t_1)}\left(z_i,\frac1{40}\rho_1\right) \right)
&\geq \sum_{i=1}^N\left(\gamma_0^{2/n} \rho_1^2 -\frac{2\delta_1}{n}t_1\right)^{n/2}\\
&\geq N2^{-n/2} \gamma_0 \rho_1^n
\end{split}
\end{equation*}
so that $N\leq 4^n\gamma_0^{-1} v_0=: N_0(n,A,v_0)$ from \eqref{eqn:covering}.
\end{proof}

We can finish the proof of Proposition~\ref{prop:improve-local-total-pre}.
We now choose $\delta_0(n,v_0,A):= (4N_0)^{-2/n}\delta_1$ so that from Claim~\ref{claim:cover-num}, we can deduce from \eqref{eqn:bound-delta_1-delta_0} that
\begin{equation*}
\rho_1^2<  2C_3N_0  t_1.
\end{equation*}
By Claim~\ref{claim:lower-bdd}, this is impossible if we choose 
$$\a_0:=\delta_1\cdot  
\min\set{(8C_0)^{-2}\tilde T, \gamma_1,(4C_0C_1)^{-2}, {C_0^{-2}} (2C_3N_0)^{-1}}.$$
This proves the claim~\eqref{eqn:pt-picking}. {The proposition follows by the claim using $\rho(x_1,t)=1$.}
\end{proof}

The upshot of Proposition~\ref{prop:improve-local-total-pre} is that it allows us to re-apply Lemma~\ref{lma:local-total-pre} to obtain an improved estimate of curvature.

\begin{prop}\label{prop:improved-pt-esti}
For $n,A>0$, there exists $\Lambda_0(n,A)>0$ such that the following holds. Under the assumptions in Proposition~\ref{prop:improve-local-total-pre}, if, in addition, we have $||\Rm(g_0)||_{L^{n/2}(B_{g_0}(x_0,1))}\leq \e$, then we have 
$$|\Rm(x_0,t)|\leq \Lambda_0 t^{-1}(\e +t^{2/n})$$
for $t\in (0,T\wedge \tilde T]$.
\end{prop}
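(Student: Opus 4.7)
The plan is to first upgrade the $L^{n/2}$ bound from Proposition~\ref{prop:improve-local-total-pre} via a second application of Lemma~\ref{lma:local-total-pre} on a rescaled flow, taking advantage of the stronger initial-data control $\|\Rm(g_0)\|_{L^{n/2}(B_{g_0}(x_0,1))} \leq \e$, and then to convert the improved integral bound into the pointwise estimate at $x_0$ via a parabolic mean-value inequality. For the first step, I would consider the rescaled Ricci flow $\tilde g(s) := 4 g(s/4)$, so that $B_{\tilde g(s)}(x_0,2) = B_{g(s/4)}(x_0,1)$. Every hypothesis of Lemma~\ref{lma:local-total-pre} is then satisfied for $\tilde g$: Proposition~\ref{prop:improve-local-total-pre} gives the scale-invariant smallness $\|\Rm(\tilde g(s))\|_{L^{n/2}(B_{\tilde g(s)}(x_0,2))} \leq \delta_1$; the scaling rule for the $\nu$-entropy combined with its monotonicity in $\tau$ (see \cite{Wang2018}*{Proposition 3.2}) yields
\begin{equation*}
\nu\pr{B_{\tilde g(s)}(x_0,2),\tilde g(s),1} = \nu\pr{B_{g(s/4)}(x_0,1), g(s/4), 1/4} \geq -A;
\end{equation*}
and the pointwise bound $|\Rm(\tilde g(s))| \leq \a_0 s^{-1}$ is immediate. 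Applying Lemma~\ref{lma:local-total-pre} to $\tilde g$ and rescaling back, one obtains, for $\sigma \in (0, T \wedge \tilde T]$ after possibly shrinking $\tilde T$,
\begin{equation*}
\int_{B_{g(\sigma)}(x_0, 1/2)} |\Rm(g(\sigma))|^{n/2}\, d\mathrm{vol}_{g(\sigma)} \leq \int_{B_{g_0}(x_0, 1)} |\Rm(g_0)|^{n/2}\, d\mathrm{vol}_{g_0} + C\sigma \leq \e^{n/2} + C\sigma.
\end{equation*}

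For the second step, the entropy bound together with \cite{Martens2025}*{Lemmas 3.1 \& 3.3} provides both a Sobolev inequality on $B_{g(\sigma)}(x_0,1)$ and a volume non-collapsing estimate $\mathrm{Vol}_{g(\sigma)}(B_{g(\sigma)}(x_0,r)) \geq c(n,A) r^n$ for $r \leq 1$. Using the Sobolev inequality I would run a parabolic Moser iteration on the standard evolution inequality $(\p_t - \Delta)|\Rm|^2 \leq C(n)|\Rm|^3$, with the scale-invariant bound $|\Rm| \leq \a_0 \sigma^{-1}$ turning the cubic term into a linear term that can be absorbed when $\a_0$ is small (exactly as in the proof of Lemma~\ref{lma:local-total-pre}). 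At the scale $r := \kappa \sqrt{\sigma}$ for a suitably small $\kappa = \kappa(n,A)$, the iteration produces a mean-value inequality of the form
\begin{equation*}
|\Rm(x_0, \sigma)|^{n/2} \leq \frac{C(n,A)}{r^n} \int_{B_{g(\sigma)}(x_0, r)} |\Rm(g(\sigma))|^{n/2}\, d\mathrm{vol}_{g(\sigma)} \leq C(n,A)\, \sigma^{-n/2}\bigl(\e^{n/2} + C\sigma\bigr).
\end{equation*}
Since $n \geq 3$, taking the $(2/n)$-th root and using subadditivity of $x \mapsto x^{2/n}$ gives $|\Rm(x_0, \sigma)| \leq \Lambda_0\, \sigma^{-1}(\e + \sigma^{2/n})$, which is the desired estimate.

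The principal technical obstacle is the mean-value inequality of the second step. The complication is that $|\Rm|^{n/2}$ is not a heat sub-solution on the nose because of the cubic nonlinearity $|\Rm|^3$ in the evolution of $|\Rm|^2$; however, the scale-invariant a priori bound $|\Rm| \leq \a_0 \sigma^{-1}$ lets this term be controlled by a linear term whose coefficient can be made small relative to the Sobolev constant by choosing $\a_0$ small enough (depending on $A$ through the Sobolev inequality). Once the sub-caloric inequality is reduced in this way, the Moser iteration and the resulting mean-value estimate are standard in the Ricci-flow context of \cite{ChanChenLee2022} and \cite{Wang2018}, so the essential new content of this proposition is the coupling of that iteration with the improved integral control obtained in the first step.
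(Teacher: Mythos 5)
Your proposal is correct and follows essentially the same two-step route as the paper: rescale by a factor of two and reapply Lemma~\ref{lma:local-total-pre} (using the improved initial control and the $L^{n/2}$ bound from Proposition~\ref{prop:improve-local-total-pre} and the entropy/scaling bookkeeping exactly as you describe), then run a parabolic Moser iteration at scale $\sqrt{t}$ using the entropy-induced Sobolev and non-collapsing. The paper cites the non-collapsing to \cite{Wang2018}*{Theorem 5.4} rather than \cite{Martens2025}, but this is a cosmetic difference; otherwise the argument and the final absorption of the cubic term via the a priori $|\Rm|\leq\a_0 t^{-1}$ bound are exactly the intended content compressed by the paper into the phrase ``standard Moser iteration.''
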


\begin{proof}
By Proposition~\ref{prop:improve-local-total-pre}, we have $||\Rm(g(t))||_{L^{n/2}(B_{g(t)}(x_0,1))}\leq \delta_1$ for $t\in [0,T\wedge \tilde T]$, where $\delta_1(n,A)$ is the constant from Lemma~\ref{lma:local-total-pre}. 
Applying Lemma~\ref{lma:local-total-pre} to the rescaled Ricci flow $\tilde g(t):= 4g(4^{-1}t),t\in [0,4(T\wedge \tilde T)]$ and then rescaling it back to $g(t)$, we see that
\begin{align*}
\int_{B_{g(t)}(x_0,\frac12 )}|\Rm(g(t))|^{n/2}\, d\mathrm{vol}_{g(t)} 
& \le \int_{B_{g_0}(x_0, 1)}|\Rm(g_0)|^{n/2}\, d\mathrm{vol}_{g(0)} + 4C_1\delta_1^{n/2} t\\
& \leq \e^{n/2}+ 4C_1\delta_1^{n/2} t
\end{align*}
for all $t\in [0,T\wedge\tilde T]$. 
Thanks to the curvature assumption and the volume non-collapsing from \cite{Wang2018}*{Theorem 5.4}, standard Moser iteration (see \cite[Chapter 19]{PeterLi} for example)
yields 
\begin{equation*}
t|\Rm(x_0,t)|\leq \Lambda_0 \pr{\e +t^{2/n}}
\end{equation*}
for some $\Lambda_0(n,A)>0$. 
This completes the proof.
\end{proof}

In particular, Proposition~\ref{prop:improved-pt-esti} asserts that the Ricci flow has an improved estimate from the smallness of the initial local curvature concentration.

\bigskip

The next Lemma states that for a \textit{complete bounded curvature} Ricci flow, the local $\nu$-functional is (almost) preserved. 
This was first considered by Wang \cite{Wang2018}, localizing the work of Perelman \cite{Perelman2002}. 
The following refined version is by Cheng \cite{Cheng2025}.

\begin{lma}[Lemma 2.5 in \cite{Cheng2025}]\label{lma:cheng}
Fix $0\leq s_1\leq s_2\leq 1$. 
Suppose $g(t)$ is a smooth solution to the Ricci flow on $M\times [s_1,s_2]$ with bounded curvature such that for all $t\in [s_1,s_2]$ and $x\in B_{g(t)}(x_0,\sqrt{t}),$
$$\Ric(g(t))\leq (n-1)\hat \a t^{-1}.$$
Then for any $\tau>0$, $\b\in (0,\frac1{10}),$ and $\gamma\in [0,8-20\b)$, we have 
\begin{equation*}
\begin{split}
&\quad \nu(\Omega_2,g(s_2),\tau+1-s_2)-\nu(\Omega_1,g(s_1),\tau+1-s_1)\\
&\geq -\left[ \frac{1+\tau}{10 \hat \a ^2\b^2}+\frac1e\right]\cdot \left[ \exp\left(\frac{s_2-s_1}{10 \hat \a ^2\b^2}\right)-1\right]
\end{split}
\end{equation*}
where 
\begin{equation*}
\left\{
\begin{array}{ll}
\Omega_1:=B_{g(s_1)}\left(x_0,10\hat \a -2\hat \a \sqrt{s_1}-\hat \a \gamma \right);\\[3mm]
\Omega_2:=B_{g(s_2)}\left(x_0,10\hat \a (1-2\b)-2\hat \a \sqrt{s_2}-\hat \a \gamma\right).
\end{array}
\right.
\end{equation*}
\end{lma}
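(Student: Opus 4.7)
The plan is to localize Perelman's monotonicity of the $\nu$-entropy \cite{Perelman2002}, following the localization strategy of Wang \cite{Wang2018}. Fix $\epsilon > 0$ and pick a near-minimizer $(u_2, s_*) \in D_{g(s_2)}(\Omega_2) \times (0, \tau + 1 - s_2]$ of $\nu_2 := \nu(\Omega_2, g(s_2), \tau + 1 - s_2)$, so that $W(\Omega_2, g(s_2), u_2, s_*) < \nu_2 + \epsilon$. Set $\tilde\tau(t) := s_* + s_2 - t$; then $\tilde\tau(s_1) = s_* + s_2 - s_1 \leq \tau + 1 - s_1$, so $\tilde\tau(s_1)$ is an admissible scale for $\nu_1 := \nu(\Omega_1, g(s_1), \tau + 1 - s_1)$. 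Extending $u_2^2$ by zero and solving the conjugate heat equation $(-\partial_t - \Delta + \mathcal{R}) v = 0$ backward from $s_2$ to $s_1$ with terminal condition $v(\cdot, s_2) = u_2^2$ is well-posed by the bounded-curvature hypothesis; positivity and $\int v \, d\mathrm{vol}_{g(t)} = 1$ are preserved. Writing $v = (4\pi\tilde\tau)^{-n/2} e^{-f}$ and $u = \sqrt{v}$, Perelman's pointwise identity reads
\begin{equation*}
\frac{d}{dt} W\bigl(g(t), u(\cdot, t), \tilde\tau(t)\bigr) = 2\tilde\tau \int \left|\Ric + \nabla^2 f - \tfrac{g}{2\tilde\tau}\right|^2 v \, d\mathrm{vol}_{g(t)} \geq 0.
\end{equation*}
If $u(\cdot, s_1)$ were compactly supported in $\Omega_1$, we would obtain $\nu_1 \leq \nu_2 + \epsilon$ immediately; infinite propagation speed of the conjugate heat equation obstructs this.

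To localize, I would use a smooth space-time cutoff $\phi : M \times [s_1, s_2] \to [0, 1]$ of the form $\phi(x, t) = \chi(R(t) - d_{g(t)}(x, x_0))$, where $R(t)$ linearly interpolates $R(s_1) = 10\hat\alpha - 2\hat\alpha\sqrt{s_1} - \hat\alpha\gamma$ and $R(s_2) = 10\hat\alpha(1 - 2\beta) - 2\hat\alpha\sqrt{s_2} - \hat\alpha\gamma$, and $\chi$ is a smooth nondecreasing profile equal to $0$ on $(-\infty, 0]$, equal to $1$ on $[20\hat\alpha\beta, \infty)$, and satisfying $|\chi'|^2/\chi + |\chi''| \leq C/(\hat\alpha^2\beta^2)$. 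The $-2\hat\alpha\sqrt{s_i}$ correction is designed to absorb the Perelman--Hamilton distance distortion \cite{Perelman2002}*{Lemma 8.3}: under $\Ric(g(t)) \leq (n-1)\hat\alpha/t$ on balls of radius $\sqrt{t/\hat\alpha}$ around $x_0$, one has $\partial_t^+ d_{g(t)}(\cdot, x_0) \geq -c_n \sqrt{\hat\alpha/t}$ in the barrier sense, integrating to a distortion of order $\hat\alpha(\sqrt{s_2} - \sqrt{s_1})$; the gap $20\hat\alpha\beta$ provides the cutoff width, and $\gamma$ leaves room so that $R(s_2) > 0$ whenever $\gamma < 8 - 20\beta$ and $s_2 \leq 1$. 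With this setup, $\phi(\cdot, s_2) \equiv 1$ on $\Omega_2$ (so that $\phi\, u_2 = u_2$), $\phi(\cdot, s_1) \equiv 0$ outside $\Omega_1$, and a direct estimate gives $|(\partial_t - \Delta)\phi^2| \leq C/(\hat\alpha^2\beta^2)$ on the annular region $\{0 < \phi < 1\}$.

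Setting $\tilde u := \phi\, u(\cdot, s_1) / \|\phi\, u(\cdot, s_1)\|_{L^2(g(s_1))}$ yields an admissible competitor for $\nu_1$ at scale $\tilde\tau(s_1)$. The discrepancy $W(g(s_1), \tilde u, \tilde\tau(s_1)) - W(g(s_1), u(\cdot, s_1), \tilde\tau(s_1))$ decomposes into an $L^2$-renormalization factor and a $|\nabla \phi|^2$ contribution; both are controlled by a Gr\"onwall estimate on $\mu(t) := \int (1 - \phi^2) v \, d\mathrm{vol}_{g(t)}$, using $\mu(s_2) = 0$, conservation of $\int v \, d\mathrm{vol}_{g(t)}$, and the bound on $(\partial_t - \Delta)\phi^2$. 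Integrating the resulting Gr\"onwall differential inequality on $[s_1, s_2]$ produces $\mu(s_1) \leq \exp\bigl((s_2 - s_1)/(10\hat\alpha^2\beta^2)\bigr) - 1$. Combining with the pointwise bound $f \geq -\tfrac{n}{2}\log(4\pi\tilde\tau) - \tfrac{n}{2}$ (which controls the $-2 u^2 \log u$ contribution from the cutoff region and is responsible for the additive $1/e$, since $-y\log y \leq 1/e$) and the upper bound $\tilde\tau \leq 1 + \tau$ yields the stated prefactor $(1 + \tau)/(10\hat\alpha^2\beta^2) + 1/e$, completing the estimate upon sending $\epsilon \to 0$.

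The main obstacle is the joint design and sharp error analysis of $\phi$: the radii of $\Omega_1, \Omega_2$, the cutoff width $20\hat\alpha\beta$, the distance-distortion absorption $2\hat\alpha\sqrt{s_i}$, and the constraint $\gamma \in [0, 8 - 20\beta)$ must all be reconciled so that the cutoff error in the Gr\"onwall integration matches precisely the form claimed. Once the cutoff is in place, the remainder is bookkeeping based on Perelman's pointwise entropy identity and Hamilton's distance-distortion estimate.
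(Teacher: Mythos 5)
The paper supplies no proof of this lemma; it is quoted verbatim as Lemma~2.5 of Cheng~\cite{Cheng2025}, so there is no ``paper's own proof'' to compare against. Your overall plan is the right one, and is indeed what Cheng does: push a near-minimizer of $\nu(\Omega_2,g(s_2),\cdot)$ backward along the conjugate heat flow, invoke Perelman's pointwise $\mathcal W$-monotonicity, and insert a space--time cutoff $\phi$ calibrated against the Ricci upper bound via \cite{Perelman2002}*{Lemma~8.3} to turn the backward solution into an $\Omega_1$-admissible competitor for $\nu(\Omega_1,g(s_1),\cdot)$.

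However, as written the cutoff is miscalibrated, and this breaks the first step. You take $\phi(x,t)=\chi(R(t)-d_{g(t)}(x,x_0))$ where $R(t)$ interpolates between $R(s_1)=\operatorname{rad}(\Omega_1)=10\hat\a-2\hat\a\sqrt{s_1}-\hat\a\gamma$ and $R(s_2)=\operatorname{rad}(\Omega_2)=10\hat\a(1-2\b)-2\hat\a\sqrt{s_2}-\hat\a\gamma$, and $\chi$ transitions from $0$ to $1$ over a width $20\hat\a\b$. At the final time $s_2$, the set where $\phi\equiv 1$ is then $\{d_{g(s_2)}(\cdot,x_0)\le R(s_2)-20\hat\a\b\}$, which is a ball \emph{strictly inside} $\Omega_2$ by exactly the transition width $20\hat\a\b$. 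Hence $\phi\,u_2\neq u_2$ for a general near-minimizer $u_2\in D_{g(s_2)}(\Omega_2)$, and you cannot launch the argument from $\nu_2$ as claimed. The two radii $\operatorname{rad}(\Omega_1),\operatorname{rad}(\Omega_2)$ already differ by $20\hat\a\b$ precisely so that the outer-support radius of the cutoff can equal $\operatorname{rad}(\Omega_1)$ at $t=s_1$ while the inner ``$\phi\equiv1$'' radius equals $\operatorname{rad}(\Omega_2)$ at $t=s_2$; concretely one should take $R(t)=10\hat\a-2\hat\a\sqrt t-\hat\a\gamma$ as the \emph{outer} support radius (so $R(s_2)=\operatorname{rad}(\Omega_2)+20\hat\a\b$) with the transition band inside it, rather than interpolating the two stated radii linearly.

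Beyond this, the Gr\"onwall step is asserted but not established, and it is where the real technical content sits. If $\mu(t)=\int(1-\phi^2)v\,d\mathrm{vol}_{g(t)}$ and $v$ solves the conjugate heat equation, then $\frac{d}{dt}\mu=-\int(\partial_t-\Delta)\phi^2\,v$, and a uniform bound $|(\partial_t-\Delta)\phi^2|\lesssim(\hat\a\b)^{-2}$ only yields a \emph{linear-in-time} bound $\mu(s_1)\lesssim (s_2-s_1)/(\hat\a^2\b^2)$, not the exponential $\exp\bigl((s_2-s_1)/(10\hat\a^2\b^2)\bigr)-1$ you quote. Obtaining the latter requires closing a differential inequality of the form $\frac{d}{dt}\mu\ge -c(1+\mu)$ (or, more accurately, tracking the $W$-functional of the \emph{renormalized} competitor $\phi u/\|\phi u\|_{L^2}$ and of $\|\phi u\|_{L^2}^2=1-\mu$ simultaneously), which is not derived here. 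The presence of $1/e$ from $\sup_{y>0}(-y\log y)$ and of $1+\tau$ from $\tilde\tau\le 1+\tau$ are both plausible, but none of the bookkeeping that would reconcile the cutoff error with the exact quoted constant $10\hat\a^2\b^2$ is carried out; you correctly flag this yourself as the ``main obstacle,'' and this, together with the cutoff miscalibration, is a genuine gap rather than routine bookkeeping.
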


\section{\bf Local smoothing using Ricci flow}\label{sec:RF-exist}

In this section, we will construct local solutions to the Ricci flow which locally regularize metrics with small local curvature concentration, i.e. Theorem~\ref{thm:localRF}.
We adapt the idea of Cheng \cite{Cheng2025} with the estimates in Section~\ref{sec:RF-prel} to construct local Ricci flow, building on the idea of Hochard \cite{Hochard_thesis}. To prove Theorem~\ref{thm:localRF}, we need to specify some constants precisely. 
We need two more ingredients. 
The first is a result of Hochard that allows us to construct a local Ricci flow on regions with bounded curvature.

\begin{prop}[Proposition 4.2 in \cite{LeeTopping2022}, based on \cite{Hochard_thesis}]\label{prop:construction-local-RF-new}
For $n\geq 2$ there exist constants $\gamma(n)\in (0,1]$ and $\Lambda(n)>1$  so that the following is true.
Suppose $(N^n,h_0)$ is a smooth manifold (not necessarily complete)
that satisfies $|\Rm(h_0)|\leq \rho^{-2}$ throughout, for some $\rho>0$.
Then there exists a smooth \textit{complete} Ricci flow $h(t)$ on $N$
for $t\in [0,\gamma \rho^2]$, with the properties that 
\begin{enumerate}
\item[(i)] $h(0)=h_0$ on $N_\rho=\{x\in N: B_{h_0}(x,\rho)\Subset N\}$;
\item[(ii)] $|\Rm(h(t))|\leq  \Lambda \rho^{-2}$ throughout $N\times [0,\gamma \rho^2]$.
\end{enumerate}
\end{prop}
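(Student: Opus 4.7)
The plan is to reduce the proposition to Shi's classical short-time existence for complete bounded-curvature Ricci flows. The whole task is then to produce a \emph{complete} smooth background metric $\tilde h_0$ on $N$ that (a) agrees with $h_0$ on $N_\rho$ and (b) has $|\Rm(\tilde h_0)|_{\tilde h_0}\le C_0(n)\rho^{-2}$ throughout; once this is achieved, Shi's theorem immediately yields a complete Ricci flow $h(t)$ on $N\times[0,\gamma\rho^2]$ starting from $\tilde h_0$, with $|\Rm(h(t))|\le\Lambda\rho^{-2}$ for dimensional constants $\gamma(n),\Lambda(n)>0$, and (i)--(ii) follow.

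For the construction of $\tilde h_0$, I would use a conformal completion. Define
$$f(x):=\sup\{s>0 : B_{h_0}(x,s)\Subset N\},$$
which is $1$-Lipschitz with respect to $h_0$, satisfies $f\ge\rho$ on $N_\rho$, and tends to $0$ along sequences escaping $N$. Because $|\Rm(h_0)|\le\rho^{-2}$ gives a uniform harmonic radius of order $\rho$, one can mollify $f$ to a smooth positive $\tilde f$ with $|\nabla\tilde f|_{h_0}\le 2$ and $\tilde f\asymp f$. Pick a smooth non-increasing convex $\chi:(0,\infty)\to[0,\infty)$ vanishing on $[1,\infty)$ with $\chi(s)\to\infty$ as $s\to 0^+$ and $|\chi'(s)|+\chi''(s)\lesssim s^{-2}$, and set $\varphi:=\chi(\tilde f/\rho)$ and $\tilde h_0:=e^{2\varphi}h_0$. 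Then $\varphi\equiv0$ on $N_\rho$, so (a) holds.

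The check of (b) and of completeness is the heart of the proof. By the conformal change formula,
$$|\Rm(\tilde h_0)|_{\tilde h_0}\;\lesssim\;e^{-2\varphi}\bigl(|\Rm(h_0)|_{h_0}+|\nabla^2_{h_0}\varphi|_{h_0}+|\nabla_{h_0}\varphi|^2_{h_0}\bigr).$$
Near the ideal boundary of $N$, where $\tilde f\ll\rho$, the chain rule gives $|\nabla\varphi|_{h_0}\lesssim\rho^{-1}\cdot(\rho/\tilde f)$ and $|\nabla^2\varphi|_{h_0}\lesssim\rho^{-2}\cdot(\rho/\tilde f)^2$, while $e^{-2\varphi}\lesssim(\tilde f/\rho)^{2}$. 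The factors cancel to yield a uniform bound $|\Rm(\tilde h_0)|_{\tilde h_0}\le C_0(n)\rho^{-2}$. The same blow-up of $\varphi$ forces $e^{\varphi}\gtrsim\rho/\tilde f$, so the $\tilde h_0$-length of any curve escaping $N$ is bounded below by $\int ds/\tilde f=\infty$, giving completeness.

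The main obstacle is exactly this balancing act in the last paragraph: the conformal factor must grow quickly enough to complete the metric yet slowly enough for its derivatives, dressed by $e^{-2\varphi}$, to remain $O(\rho^{-2})$. This requires using the curvature hypothesis $|\Rm(h_0)|\le\rho^{-2}$ to control the regularity of $f$ at the scale $\rho$ (so that $\varphi$ has the claimed derivative bounds), and then choosing $\chi$ precisely so that $\chi'$ and $\chi''$ decay at the rate $s^{-2}$. A technically cleaner alternative is to replace the conformal surgery by a model gluing in geodesic balls of $h_0$-radius $\rho$ around each point of $N\setminus N_\rho$, but the bookkeeping of the derivative estimates is essentially the same.
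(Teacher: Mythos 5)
The paper does not prove this proposition; it imports it verbatim as Proposition 4.2 of Lee--Topping, which in turn is Hochard's construction, and the paper merely records (in the paragraph following the statement) that the proof modifies the incomplete metric at its extremities to make it complete and then applies Shi's theorem. Your overall strategy — produce a complete background metric $\tilde h_0$, agreeing with $h_0$ on $N_\rho$, with $|\Rm(\tilde h_0)|\lesssim\rho^{-2}$, via a conformal blow-up depending on a regularized distance to the ideal boundary, then invoke Shi — is exactly the strategy Hochard and Lee--Topping use, so at the level of plan you have reconstructed the cited argument correctly.

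Two points in your sketch do not hold up as written, though. First, the claim that $|\Rm(h_0)|\le\rho^{-2}$ ``gives a uniform harmonic radius of order $\rho$'' is false: harmonic radius (and in fact any coordinate control at a definite scale) requires an injectivity radius or volume non-collapsing input in addition to a two-sided curvature bound, and no such lower bound is available here. Consequently the step where you mollify $f$ to a smooth $\tilde f$ with a useful Hessian bound is unjustified as stated; moreover the mollification would have to be performed at the variable scale $\tilde f(x)$ rather than at scale $\rho$ near the extremities, so a harmonic radius of order $\rho$ would not help anyway. The known proofs get around this either by passing to the local universal cover of small balls (where the conjugate radius gives a definite injectivity radius and Hessian comparison applies), by working with one-sided barrier estimates for the distance function, or by accepting a $C^{1,1}$ background metric. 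Second, your stated bounds on $\chi$ are internally inconsistent: with $|\chi'(s)|,\chi''(s)\lesssim s^{-2}$ one gets $|\nabla\varphi|\lesssim \rho/\tilde f^2$ and only the exponential estimate $e^{-2\varphi}\lesssim e^{-c\rho/\tilde f}$, whereas the polynomial bounds you then use, $|\nabla\varphi|\lesssim \tilde f^{-1}$ and $e^{-2\varphi}\lesssim(\tilde f/\rho)^2$, correspond to $\chi(s)=-\log s$, i.e.\ $|\chi'(s)|\sim s^{-1}$. Either normalization can be made to work, but you must pick one and carry it through consistently; as written the Hessian term $\chi'(\tilde f/\rho)\rho^{-1}\nabla^2\tilde f$ is also left uncontrolled without a quantitative bound on $\nabla^2\tilde f$. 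Given that the paper simply cites Lee--Topping here, the cleanest fix is to do the same; if you do want to include a proof, you need to replace the harmonic-radius step by one of the devices above and fix the $\chi$ bookkeeping.
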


The Ricci flow obtained in \cite[Proposition 4.2]{LeeTopping2022} is constructed by modifying the incomplete metric at its extremities in order to make it complete, and followed by running the classical Shi's solution. Thus, the solution $h(t)$ is by construction a complete solution on the whole $N$.

We also recall the shrinking balls lemma, which is one of the local ball inclusion results based on  the distance distortion estimates of Hamilton and Perelman from  \cite[Lemma 8.3]{Perelman2002}.

\begin{lma}[{\cite[Corollary 3.3]{SimonTopping2022}}]
\label{l-balls}
For $n\geq 2$ there exists a constant $\beta(n)\geq 1$  such that the following is true. 
Suppose $(N^n,g(t))$ is a Ricci flow for $t\in [0,S]$ and $x_0\in N$ with $B_{g(0)}(x_0,r)\Subset N$ for some $r>0$, and $\Ric_{g(t)}\leq a(n-1)/t$ on $B_{g_0}(x_0,r)$ for each $t\in (0,S]$. 
Then for each $t\in (0,S],$ 
$$B_{g(t)}\left(x_0,r-\beta\sqrt{a t}\right)\subseteq B_{g_0}(x_0,r).$$
\end{lma}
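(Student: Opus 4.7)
The plan is to deduce the inclusion from Perelman's distance distortion estimate (\cite{Perelman2002}*{Lemma 8.3}) applied to the $g(s)$-distance between $x_0$ and a candidate point $y$, combined with a continuity-in-time bootstrap to handle the fact that the Ricci upper bound is posited only on the $g_0$-ball.

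I would prove the contrapositive: for any $y\in N$ with $d_{g_0}(x_0,y)\geq r$, show that $d_{g(t)}(x_0,y)\geq r-\beta\sqrt{at}$ for every $t\in(0,S]$. Perelman's lemma asserts that whenever $\Ric(g(s))\leq K$ holds on $g(s)$-balls of radius $r_0$ about both $x_0$ and $y$, then in the forward-barrier sense
$$\frac{d^+}{ds}\, d_{g(s)}(x_0,y)\geq -(n-1)\Bigl(\tfrac{2}{3}Kr_0+\tfrac{1}{r_0}\Bigr).$$
Substituting $K=a(n-1)/s$ and optimizing $r_0=\sqrt{s/(a(n-1))}$ gives $\frac{d^+}{ds}\,d_{g(s)}(x_0,y)\geq -\beta_0(n)\sqrt{a/s}$. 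Integrating over $s\in(0,t]$ (the improper integral $\int_0^t s^{-1/2}\,ds=2\sqrt{t}$ converges) yields
$$d_{g(t)}(x_0,y)\geq d_{g_0}(x_0,y)-2\beta_0(n)\sqrt{at}\geq r-\beta\sqrt{at},$$
where $\beta(n):=\max\{2\beta_0(n),1\}$.

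For Perelman's lemma to apply, both auxiliary balls of radius $r_0$ must lie inside $B_{g_0}(x_0,r)$, where the Ricci bound is assumed. I would arrange this via a standard continuity bootstrap: let $T^*\in[0,S]$ be the supremum of $t$ for which the inclusion $B_{g(s)}(x_0,r-\beta\sqrt{as})\subseteq B_{g_0}(x_0,r)$ holds on all of $[0,t]$. This set is non-empty (trivially at $t=0$) and closed by continuity of the flow. For openness at $T^*<S$, any boundary witness $y$ with $d_{g(T^*)}(x_0,y)=r-\beta\sqrt{aT^*}$ must sit strictly inside $B_{g_0}(x_0,r)$ by the bootstrap hypothesis on $[0,T^*)$, so the required auxiliary balls are available, and the distance-distortion computation above yields a strict inequality $d_{g_0}(x_0,y)<r$ that propagates to slightly later times, contradicting the definition of $T^*$.

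The main obstacle is precisely this bootstrap: Perelman's inequality demands a local Ricci bound near the moving endpoint $y$, while the hypothesis provides it only on the initial $g_0$-ball. Closing the circularity requires calibrating $\beta(n)$ large enough that the auxiliary radius $r_0=\sqrt{s/(a(n-1))}$ is absorbed by the buffer $\beta\sqrt{as}$ up to a dimensional constant; the remaining technicalities—the forward-barrier sense of Perelman's inequality and the improper integration through $s=0$—are routine.
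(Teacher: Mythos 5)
The paper does not prove this lemma; it is quoted verbatim as \cite[Corollary 3.3]{SimonTopping2022} and used as a black box, so there is no internal proof to compare against. That said, your attempted reconstruction has a genuine gap in the bootstrap step.

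Your contrapositive sets up the right integral estimate, and the calculus (choice of $K=a(n-1)/s$, the optimal $r_0\sim\sqrt{s/a}$, and the convergent $\int_0^t s^{-1/2}\,ds$) is fine. The problem is the applicability of Perelman's two-endpoint distance estimate. That estimate requires $\Ric(g(s))\le (n-1)K$ on the evolving balls $B_{g(s)}(x_0,r_0)$ \emph{and} $B_{g(s)}(y,r_0)$. In your contrapositive you fix a point $y$ with $d_{g_0}(x_0,y)\ge r$, i.e.\ $y$ lies \emph{outside} (or on the boundary of) $B_{g_0}(x_0,r)$, the only region where the hypothesis grants any Ricci control. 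The ball $B_{g(s)}(y,r_0)$ therefore cannot be contained in $B_{g_0}(x_0,r)$, for any $r_0>0$ and any $s$, because $y$ itself is not in the open ball. Your bootstrap only tracks the inclusion $B_{g(s)}(x_0,r-\beta\sqrt{as})\subseteq B_{g_0}(x_0,r)$, which controls a $g(s)$-neighborhood of the \emph{center} $x_0$; it says nothing about the $g(s)$-neighborhood of the far endpoint $y$. The sentence ``any boundary witness $y$ with $d_{g(T^*)}(x_0,y)=r-\beta\sqrt{aT^*}$ must sit strictly inside $B_{g_0}(x_0,r)$'' conflates two different points: the $y$ used in your contrapositive has $d_{g_0}(x_0,y)\ge r$ by assumption, while the $y$ on the boundary of the shrunken $g(T^*)$-ball satisfies (at best) $d_{g_0}(x_0,y)\le r$ with possible equality. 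Even granting $d_{g_0}(x_0,y)<r$, the containment $B_{g(s)}(y,r_0)\subseteq B_{g_0}(x_0,r)$ is itself a statement comparing a $g(s)$-ball to a $g_0$-ball, which is exactly the kind of statement the lemma is trying to prove; asserting it at this point is circular.

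The subtlety your argument misses is precisely why this lemma is nontrivial. A correct proof cannot simply cite the two-endpoint Perelman estimate with a ball hypothesis around $y$; it must instead exploit the fact that the second-variation mechanism behind Perelman's estimate only samples the Ricci curvature \emph{along the minimizing geodesic} near its two ends, together with a first-failure-time argument that keeps the relevant portion of the geodesic inside $B_{g_0}(x_0,r)$. Tracking where that evolving geodesic sits relative to the fixed $g_0$-ball is the crux of the Simon--Topping argument, and it is exactly the point your bootstrap does not engage with.
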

\bigskip

We are now ready to prove Theorem~\ref{thm:localRF}.

\begin{proof}[Proof of Theorem~\ref{thm:localRF}]
 We first prove the existence when $||\Rm(g_0)||_{L^{n/2}(B_{g_0}(x_0,1),g_0)}\leq \e_1$ for $\e_1>0$ which is fixed below.
By \cite[Lemma 3.2]{Martens2025}, we might assume $\nu\left(B_{g_0}(x,1),g_0,2 \right)\geq -A_0$ for some $A_0(n,C_s)$, by working on a slightly smaller ball if necessary. 
We need to specify how the positive constants are chosen, since  the precise forms of the constants will play a crucial role in the construction.
\begin{itemize}
\item $\gamma(n)$ and $\Lambda(n)$ from Proposition~\ref{prop:construction-local-RF-new};
\item $\b(n)$ from Lemma~\ref{l-balls};
\item $A:=A_0+1$;
\item $\tilde T(n),\delta_0(n,v_0,A)$ and $\a_0(n,v_0,A)$ from Proposition~\ref{prop:improve-local-total-pre};
\item $\Lambda_0(n,A)$ from Proposition~\ref{prop:improved-pt-esti};
\item $\e_1(n,v_0,A):=\min\set{ \delta_0, \a_0 (2\Lambda_0)^{-1}, \a_0 (4\Lambda\Lambda_0)^{-1}}$;
\item $\mu(n,v_0,A):=\min\set{ 10^{-2},\gamma (2\e_1 \Lambda_0)^{-1}}$;
\item $\sigma(n,v_0,A):=10^{-3} (1-(1+\mu)^{-1/5} )$;
\item $L(n,v_0,A):=10^4\a_0^{-1/2}\cdot \max\set{ \b,\tilde T ^{-1/2},\e_1^{-n}   ,n, \mu\sigma^{-2} , \sigma^{-2}\left[1-(1+\mu)^{-1/5} \right]^{-1}}$;
\item $\hat T(n,v_0,A):=\min\set{ 10^{-1}\a_0^{-1}L^{-2},\left(\frac14 L^2\a_0+\mu+1\right)^{-1}}.$
\end{itemize}

Let $\rho_0\in (0,\frac12)$ be small such that $|\Rm(g_0)|\leq \rho_0^{-2}$ on $N_0:=B_{g_0}(x_0,2+\bar r)$. 
Then from Proposition~\ref{prop:construction-local-RF-new}, there exists a smooth complete Ricci flow $h(t)$ on $N_0\times [0,\gamma \rho_0^2]$ such that 
\begin{itemize}
\item $h(0)=g_0$ on $(N_0)_{\rho_0} := B_{g_0}(x_0,2+\bar r-\rho_0)$;
\item  $|\Rm(h(t))|\leq \Lambda \rho_0^{-2}$ on $N_0\times [0,\gamma \rho_0^2]$.
\end{itemize}
We choose $t_0:= \rho_0^2\cdot \min\{\gamma, \Lambda^{-1}\a_0\}$ so that
\begin{equation*}
|\Rm(h(t))|\leq \Lambda \rho_0^{-2}\leq \a_0 t_0^{-1} \leq \a_0t^{-1}
\end{equation*}
on $N_0\times (0,t_0]$.
We also denote $t_{-1}=0$ for notional consistence. 
We define a sequence of times $t_k$ and radii $r_k$ inductively as follows. 
\begin{enumerate}
\item[(i)] $r_0:=2+\bar r-\rho_0$;
\item[(ii)] $t_{k+1}:=(1+\mu) t_k$ for $k\geq 0$;
\item[(iii)] $r_{k+1}=r_k-8L\sqrt{\a_0 t_k}$  for $k\geq 0$.
\end{enumerate}

We consider the statement $\mathcal{P}(k)$: 
\begin{quote}
There exists a smooth Ricci flow $g(t)$ defined on $B_{g_0}(x_0,r_k)\times [0,t_k]$ such that
\end{quote}
\begin{enumerate}
\item[(I)] $g(0)=g_0$,
\item[(II)] $|\Rm(g(t))|\leq \a_0 t^{-1}$ for $t\in (0,t_k]$, and
\item[(III)] $ \nu\left(B_{ g(t)}\left(x,L\sqrt{\a_0 t_k}\right), g(t),\frac14 L^2 \a_0 t_k\right)\geq -A$ for all $(x,t)\in B_{g_0}(x_0,r_k)\times (0,t_k]$.
\end{enumerate}

From the discussion above, since $h(t)$ is smooth and $A>A_0$, we might assume $\mathcal{P}(0)$ is true by shrinking $\rho_0$ if necessary, for instances using the continuity of entropy \cite[Corollary 2.5]{Wang2018}.  
We want to show that $\mathcal{P}(k)$ is true if $r_k>0$ and $t_k<\hat T$, both of which hold when $k=0$. 

We prove this by induction on $k$. 
$\mathcal{P}(0)$ is true by the construction above. 
{ Suppose that  for some $k\ge 0$ with $r_{k+1}>0$ and $t_{k+1}< \hat T,$ $\mathcal{P}(i)$ is true for $0\le i\le k.$}
We let $g(t)$ be the Ricci flow  on $B_{g_0}(x_0,r_k)\times [0,t_k]$ from $\mathcal{P}(k)$. 
We first improve the curvature estimate on a smaller domain. 

\begin{claim}\label{claim:impro-est}
We have $$|\Rm(g(t))|\leq 2\e_1\Lambda_0t^{-1}$$ on $B_{g_0}(x_0,r_k-2L\sqrt{\a_0 t_k})\times [0,t_k]$.
\end{claim}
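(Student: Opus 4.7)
The plan is to promote the single-point estimate of Proposition~\ref{prop:improved-pt-esti} to a uniform pointwise bound on the smaller ball by a parabolic rescaling whose normalization is dictated by the form of the entropy bound $\mathcal{P}(k)$(III). Fix any $y \in B_{g_0}(x_0, r_k - 2L\sqrt{\alpha_0 t_k})$, set $\lambda := 2(L\sqrt{\alpha_0 t_k})^{-1}$, and consider the rescaled Ricci flow $\tilde g(s) := \lambda^2 g(\lambda^{-2} s)$ on $s \in [0, \lambda^2 t_k] = [0, 4(L^2\alpha_0)^{-1}]$. The crucial point is that this choice of $\lambda$ gives $B_{\tilde g(s)}(y, 2) = B_{g(\lambda^{-2} s)}(y, L\sqrt{\alpha_0 t_k})$ and, by the scale-invariance $\nu(\Omega, \lambda^2 g, \lambda^2 \tau) = \nu(\Omega, g, \tau)$, it turns the statement $\nu(B_{\tilde g(s)}(y, 2), \tilde g(s), 1) \geq -A$ into exactly the entropy bound supplied by $\mathcal{P}(k)$(III) at time $t = \lambda^{-2} s \leq t_k$ and base point $y \in B_{g_0}(x_0, r_k)$.

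I will then verify the remaining hypotheses of Proposition~\ref{prop:improve-local-total-pre} for $\tilde g(s)$ centered at $y$. The compact containment $B_{g(t)}(y, L\sqrt{\alpha_0 t_k}) \subseteq B_{g_0}(y, 2L\sqrt{\alpha_0 t_k}) \subseteq B_{g_0}(x_0, r_k) \Subset M$ for $t \in (0, t_k]$ follows from the shrinking balls Lemma~\ref{l-balls} applied with $a = \alpha_0$, using $\mathcal{P}(k)$(II) to supply the Ricci upper bound and $L \geq \beta$ from the constants list. The curvature bound $|\Rm(\tilde g(s))| \leq \alpha_0 s^{-1}$ on $B_{\tilde g(s)}(y, 2)$ is immediate from scale-invariance. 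Since $\hat T \leq (10 L^2\alpha_0)^{-1}$ forces $\lambda^{-1} = \tfrac12 L\sqrt{\alpha_0 t_k} \leq 1$, the Ahlfors hypothesis (d) of Proposition~\ref{prop:improve-local-total-pre} and the initial curvature hypotheses (e) (as well as the $L^{n/2}$ smallness required for Proposition~\ref{prop:improved-pt-esti} with parameter $\varepsilon$) for $\tilde g_0 = \lambda^2 g_0$ reduce by scale-invariance to Theorem~\ref{thm:localRF}(a) and (c) at the point $y \in B_{g_0}(x_0, 2+\bar r)$, since the rescaled balls $B_{\tilde g_0}(y, r)$ with $r \leq 2$ sit inside $B_{g_0}(y, 1)$.

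Having verified the hypotheses, Proposition~\ref{prop:improved-pt-esti} gives
\begin{equation*}
|\Rm(\tilde g(s))(y)| \leq \Lambda_0 s^{-1}\bigl(\varepsilon + s^{2/n}\bigr)
\end{equation*}
for all $s \in (0, \lambda^2 t_k \wedge \tilde T] = (0, \lambda^2 t_k]$, the last equality because $4(L^2\alpha_0)^{-1} < \tilde T$. Unwinding the scaling via $s = \lambda^2 t$ yields
\begin{equation*}
|\Rm(g(t))(y)| \leq \Lambda_0 t^{-1}\bigl(\varepsilon + (\lambda^2 t)^{2/n}\bigr) \leq \Lambda_0 t^{-1}\bigl(\varepsilon + (4(L^2\alpha_0)^{-1})^{2/n}\bigr)
\end{equation*}
for every $t \in (0, t_k]$. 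The smallness assumption $\varepsilon \leq \varepsilon_1$ controls the first error term, while the choice $L \geq 10^4 \alpha_0^{-1/2}\varepsilon_1^{-n}$ in the constants list forces $(4(L^2\alpha_0)^{-1})^{2/n} \leq \varepsilon_1$; adding the two yields the asserted $|\Rm(g(t))(y)| \leq 2\varepsilon_1 \Lambda_0 t^{-1}$.

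The principal obstacle is identifying the correct rescaling $\lambda$: it must be tuned so that the entropy bound from $\mathcal{P}(k)$(III), stated in the specific parabolic normalization $(\text{radius } L\sqrt{\alpha_0 t_k},\,\text{scale } \tfrac14 L^2\alpha_0 t_k)$, matches exactly the normalization $(\text{radius } 2,\,\text{scale } 1)$ required by Proposition~\ref{prop:improve-local-total-pre}. Once $\lambda = 2(L\sqrt{\alpha_0 t_k})^{-1}$ is fixed by this matching principle, the remaining verifications are essentially bookkeeping with the explicit constants $L$ and $\hat T$ chosen at the start of the proof of Theorem~\ref{thm:localRF}, and the improved estimate follows by a direct invocation of Propositions~\ref{prop:improve-local-total-pre} and~\ref{prop:improved-pt-esti}.
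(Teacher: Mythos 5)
Your proof is correct and follows the same approach as the paper: you apply the same parabolic rescaling (your $\lambda$ is the reciprocal of the paper's, so $\tilde g$ is identical), transfer the entropy bound $\mathcal{P}(k)$(III) and the shrinking-balls inclusion into the hypotheses of Proposition~\ref{prop:improve-local-total-pre}, and then invoke Proposition~\ref{prop:improved-pt-esti}. The only difference is that you spell out the final arithmetic — showing $\varepsilon \leq \varepsilon_1$ and $(4(L^2\alpha_0)^{-1})^{2/n} \leq \varepsilon_1$ from the explicit form of $L$ — where the paper simply says the conclusion follows from the choice of $L$ and $\varepsilon_1$; this extra bookkeeping is sound and consistent with the constants specified at the start of the proof.
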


\begin{proof}[Proof of Claim~\ref{claim:impro-est}]
Let $z\in B_{g_0}(x_0,r_k-2L\sqrt{\a_0 t_k})$ so that Lemma~\ref{l-balls} implies
\begin{equation*}
\begin{split}
B_{g(t)}(z,L\sqrt{\a_0 t_k})&\subseteq B_{g_0}(z,2L\sqrt{\a_0 t_k})\subseteq  B_{g_0}(x_0,r_k)
\end{split}
\end{equation*}
for all $t\in [0,t_k]$. 
We consider the rescaled Ricci flow $\tilde g(t):=\lambda^{-2}g(\lambda^2 t)$ for $t\in [0,\lambda^{-2}t_k]$ where $\lambda=\frac12 L\sqrt{\a_0 t_k}$ and denote $\tilde g_0:=\tilde g(0)$. 
Then for all $t\in [0,4L^{-2}\a_0^{-1}]$, we have
\begin{enumerate}
\item[(A)] $B_{\tilde g(t)}(z,2)=B_{g(\lambda^2 t)}(z,2\lambda)\Subset M$;
\item[(B)] $|\Rm(\tilde g(t))|\leq \a_0 t^{-1}$ on $B_{\tilde g(t)}(z,2)$;
\item [(C)]
$ \nu\left(B_{\tilde g(t)}(z,2),\tilde g(t),1\right)\geq  -A$;
\item[(D)] $\mathrm{Vol}_{\tilde g_0}\left( B_{\tilde g_0}(x,r)\right)\leq v_0r^n$ for $x\in B_{\tilde g_0}(z,1)$ and $r\in (0,1]$;
\item[(E)] $||\Rm(\tilde g_0)||_{L^{n/2}(B_{\tilde g_0}(z,2) , \tilde g_0)}<\e_1$.
\end{enumerate}
Here we have used $2\lambda=L\sqrt{\a_0 t_k}<1$ so that $B_{\tilde g_0}(x,2)\subseteq B_{g_0}(x,1)$ where the initial estimates hold. 
The entropy lower bound (C) follows from (III):
\begin{equation*}
\begin{split}
\nu\left(B_{\tilde g(t)}(z,2),\tilde g(t),1\right)
&= \nu\left(B_{ g(s)}\left(z,L\sqrt{\a_0 t_k}\right), g(s),\frac14 L^2 \a_0 t_k\right)\geq -A
\end{split}
\end{equation*}
for $s=\lambda^2t\in [0,t_k]$. 
Thus, the rescaled flow $\tilde g(t)$ satisfies all the conditions of Proposition~\ref{prop:improve-local-total-pre}.
It follows from Proposition~\ref{prop:improved-pt-esti} and our choice of $L$ and $\e_1$ that for $t\in  (0,t_k]$,
\begin{equation*}
|\Rm(z,t)|\leq 2\e_1\Lambda_0 t^{-1}.
\end{equation*} 
This proves the Claim.
\end{proof}

We now use the improved estimate from Claim~\ref{claim:impro-est} to extend the solution.

\begin{claim}\label{claim:extension}
The Ricci flow $g(t)$ can be extended to $[0,t_{k+1}]$ on $B_{g_0}(x_0,r_k-4L\sqrt{\a_0 t_k})$ such that (I)-(II) in $\mathcal{P}(k+1)$ hold. 
Moreover, there is a complete bounded curvature Ricci flow $h_k(t)$ on $B_{g_0}(x_0,r_k-2L\sqrt{\a_0t_k})\times [t_k,t_{k+1}]$ such that $|\Rm(h_k(t))|\leq \a_0 t^{-1}$ and  $h_k(t)|_{B_{g_0}(x_0,r_k-4L\sqrt{\a_0 t_k})}=g(t)$.
\end{claim}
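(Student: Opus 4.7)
The strategy is to apply Hochard's extension (Proposition~\ref{prop:construction-local-RF-new}) with $g(t_k)$ as initial data, using the uniform curvature ceiling from Claim~\ref{claim:impro-est} to fix the Hochard scale. Concretely, set $N := B_{g_0}(x_0, r_k - 2L\sqrt{\a_0 t_k})$ and $\rho_k := \sqrt{t_k/(2\e_1\Lambda_0)}$; Claim~\ref{claim:impro-est} gives $|\Rm(g(t_k))| \leq \rho_k^{-2}$ throughout $N$. Proposition~\ref{prop:construction-local-RF-new} then produces a complete Ricci flow $\tilde h(s)$ on $N$ for $s \in [0, \gamma\rho_k^2]$ with $\tilde h(0) = g(t_k)$ on $N_{\rho_k} := \{x \in N : B_{g(t_k)}(x, \rho_k) \Subset N\}$ and curvature bound $|\Rm(\tilde h(s))| \leq \Lambda\rho_k^{-2}$. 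I set $h_k(t) := \tilde h(t - t_k)$.

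Next I would verify the duration and curvature bounds. By the choice $\mu \leq \gamma/(2\e_1\Lambda_0)$ in the parameter list, $\gamma\rho_k^2 \geq \mu t_k = t_{k+1}-t_k$, so $h_k$ is defined on $[t_k, t_{k+1}]$. On this interval, using $\e_1 \leq \a_0/(4\Lambda\Lambda_0)$ and $1+\mu < 2$,
\[
t\,|\Rm(h_k(t))| \leq \Lambda\rho_k^{-2}\cdot t = 2\Lambda\e_1\Lambda_0 \cdot (t/t_k) \leq 2\Lambda\e_1\Lambda_0(1+\mu) \leq \a_0,
\]
which yields the desired $|\Rm(h_k(t))| \leq \a_0 t^{-1}$.

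The main obstacle is identifying $h_k(t_k)$ with $g(t_k)$ on the smaller ball $B_{g_0}(x_0, r_k - 4L\sqrt{\a_0 t_k})$, i.e.\ showing this ball sits inside $N_{\rho_k}$. I would use the curvature bound from $\mathcal{P}(k)$ together with Lemma~\ref{l-balls}, rearranged to the form $d_{g_0}(x, y) \leq d_{g(t_k)}(x, y) + \b\sqrt{\a_0 t_k}$ whenever the relevant $g_0$-ball remains inside $B_{g_0}(x_0, r_k)$: for $x$ in the inner ball and $y \in B_{g(t_k)}(x, \rho_k)$,
\[
d_{g_0}(x_0, y) < r_k - 4L\sqrt{\a_0 t_k} + \rho_k + \b\sqrt{\a_0 t_k},
\]
which is $< r_k - 2L\sqrt{\a_0 t_k}$ whenever $\rho_k < (2L-\b)\sqrt{\a_0 t_k}$. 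The choice $L \geq 10^4\a_0^{-1/2}\e_1^{-n}$ (so that $L\sqrt{\a_0 t_k}$ dominates $\rho_k = \sqrt{t_k/(2\e_1\Lambda_0)}$ as well as $\b\sqrt{\a_0 t_k}$) secures this inequality, hence $B_{g(t_k)}(x, \rho_k) \subseteq N$ and $h_k(t_k)(x) = g(t_k)(x)$. Defining the extension by $g(t) := h_k(t)$ on $B_{g_0}(x_0, r_k - 4L\sqrt{\a_0 t_k})$ for $t \in [t_k, t_{k+1}]$ produces a smooth Ricci flow (smoothness across $t = t_k$ is automatic since both branches satisfy the Ricci flow equation from a common value at $t_k$), and properties (I)--(II) of $\mathcal{P}(k+1)$ follow by combining $\mathcal{P}(k)$ on $[0, t_k]$ with the above curvature estimate on $[t_k, t_{k+1}]$. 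The constants $\e_1, \mu, L$ are calibrated precisely so that the Hochard collar $\rho_k$ plus the distance distortion $\b\sqrt{\a_0 t_k}$ fit inside the geometric shrinkage $2L\sqrt{\a_0 t_k}$ absorbed into the radius transition $r_k \to r_{k+1}$.
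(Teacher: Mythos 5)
Your proof is correct and follows essentially the same route as the paper: invoke Claim~\ref{claim:impro-est} to pick the Hochard scale $\rho_k=\sqrt{t_k/(2\e_1\Lambda_0)}$, apply Proposition~\ref{prop:construction-local-RF-new} with a time translation on $N_k=B_{g_0}(x_0,r_k-2L\sqrt{\a_0 t_k})$, check duration via the choice of $\mu$, curvature via the choice of $\e_1$, and verify the collar inclusion $(N_k)_{\rho_k}\supseteq B_{g_0}(x_0,r_k-4L\sqrt{\a_0 t_k})$ with the shrinking balls lemma. Your triangle-inequality reformulation of the collar inclusion is equivalent to the paper's direct ball-containment argument.
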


\begin{proof}[Proof of Claim~\ref{claim:extension}]
Denote $\a_1:= 2\e_1\Lambda_0\le \alpha_0$ for convenience.  
We apply Proposition~\ref{prop:construction-local-RF-new} with a translation in time, on $N=N_k:=B_{g_0}(x_0,r_k-2L\sqrt{\a_0t_k})$ and $h_{0,k}=g(t_k)$ with $\rho=\rho_k:=\sqrt{\a_1^{-1}t_k}$ to obtain a complete Ricci flow $h_k(t)$ on $N_k\times [t_k,t_k+\gamma \a_1^{-1} t_k]$ such that 
\begin{itemize}
\item $h_k(t_k)=h_{0,k}$ on $(N_k)_{\rho_k}=\{ x \in N_k : B_{g(t_k)}(x,\rho_k)\Subset N_k \}$;
\item $|\Rm(h_k(t))|\leq \Lambda \rho_k^{-2}= \Lambda \a_1 t_k^{-1}$ on $N_k\times [t_k,(1+\gamma \a_1^{-1} )t_k]$.
\end{itemize}

It remains to show that $(N_k)_{\rho_k}\supseteq B_{g_0}(x_0,r_{k}-4L\sqrt{\a_0 t_k})$.  
Let $x\in B_{g_0}(x_0,r_{k}-4L\sqrt{\a_0 t_k})$, and hence we have $B_{g_0}(x,2L\sqrt{\a_1 t_k})\subseteq N_k$. 
Thanks to (II) in the induction hypothesis $\mathcal{P}(k)$, Lemma~\ref{l-balls} implies 
\begin{equation*}
B_{g(t_k)}(x,L\sqrt{\a_0 t_k}) 
\Subset B_{g_0}(x,2L\sqrt{\a_0 t_k})\subseteq N_k
\end{equation*}
so that $x\in (N_k)_{\rho_k}$. 
Thus, $h_k({ t_k})=h_{0,k}=g(t_k)$ on $B_{g_0}(x_0,r_{k}-4L\sqrt{\a_0 t_k})$. 

By our choice of $\mu$, we know $t_{k+1}-t_k=\mu t_k\leq \gamma \a_1^{-1} t_k$. 
Therefore, we  might patch the Ricci flow together on $B_{g_0}(x_0,r_k-4L\sqrt{\a_0t_k})\times [0,t_{k+1}]$ by taking  
\begin{equation*}
g(t):=\begin{cases}
    g(t)&\text{if } t\in[0, t_k]\\
    h_k(t)&\text{if } t\in [t_k,t_{k+1}]
\end{cases}.
\end{equation*}
By our choice of $\e_1$, we see that for $t\in [t_k,t_{k+1}]$,
\begin{equation*}
\begin{split}
|\Rm(g_k(t))|
\leq \Lambda \a_1 t^{-1}_k
& =  2\e_1 \Lambda_0\Lambda(1+\mu)  t^{-1}_{k+1}\\
&\leq  4\e_1 \Lambda_0\Lambda t^{-1}\leq \a_0 t^{-1}.
\end{split}
\end{equation*}
This finishes the proof of the claim.
\end{proof}
{ We will assume $g(t)$ for $t\leq t_k$, is constructed by inductively using Claim~\ref{claim:extension} so that on each sub-interval $[t_i,t_{i+1}]$, the Ricci flow $g(t)$ is restriction of some complete bounded curvature Ricci flow.} 

It remains to show that (III) in $\mathcal{P}(k+1)$ holds. 
For each $i$, from Claim~\ref{claim:extension} 
and the induction hypothesis, there is a complete bounded curvature Ricci flow $h_i(t)$ on $B_{g_0}(x_0,r_i-2L\sqrt{\a_0 t_i})\times [t_i,t_{i+1}]$ such that 
\begin{itemize}
\item $h_i(t)|_{B_{g_0}(x_0,r_i-4L\sqrt{\a_0 t_i})}=g(t)$;
\item $|\Rm(h_i(t))|\leq \a_0 t^{-1}$ where $\a_0< 1<10^3n$.
\end{itemize}

We now consider the rescaled Ricci flow $\tilde g(t):=t_k^{-1}g(t_k t)$ for $t\in [0,1+\mu]$. 
For each $i$, we also define the rescaled Ricci flow $\tilde h_i(t):=t_k^{-1}h_i(t_k t)$ for $t\in [\tilde t_i,\tilde t_{i+1}]$ where {$\tilde t_i:=t_k^{-1}t_i$}. 
For $z\in B_{g_0}(x_0,r_{k+1})$, we apply Lemma~\ref{lma:cheng} to deduce 
\begin{equation}\label{eqn:monot}
\begin{split}
&\quad \nu\left(B_{\tilde h_i(\tilde t_{i+1})}\left(z, \pr{10(1-2\hat\b)- \gamma-2\sqrt{\tilde t_{i+1}}} \hat\a\right),\tilde h_i(\tilde t_{i+1}),\tau+1-\tilde t_{i+1} \right)\\
&\geq \nu\left(B_{\tilde h_i(\tilde t_{i})}\left(z, \pr{10- \gamma-2\sqrt{\tilde t_{i}}} \hat\a\right),\tilde h_i(\tilde t_{i}),\tau+1-\tilde t_{i}\right)\\
&\quad -\left( \frac{1+\tau}{10 \hat \a ^2\hat\b^2}+\frac1e\right)\cdot \left[ \exp\left(\frac{\mu \tilde t_i}{10 \hat \a ^2\hat\b^2}\right)-1\right]
\end{split}
\end{equation}
for any $\hat\a\geq 10^3n$, $\tau>0$, $\hat\b\in (0,\frac1{10})$, and $\gamma\in [0,8-20\hat\b)$. 
The analogous estimate also holds if $\tilde t_{i+1}$ is replaced by any $t\in [\tilde t_i,\tilde t_{i+1}]$. 
\bigskip

We start to prove (III) in $\mathcal{P}(k+1).$ 
Suppose $\tilde t\in [\tilde t_\ell,\tilde t_{\ell+1}]$ for some $-1\le \ell\le k.$
We fix $$\hat\a:=5^{-1}L\sqrt{\a_0}\geq 10^3n\quad\text{and}\quad \tau:=\mu+\frac14 L^2\a_0.$$
For $-1\le i\le \ell,$ we define
\begin{align*}
\hat\b_i&:= \sigma\cdot \tilde t_i^{1/5},\\
\gamma_{i}&:=\gamma_{i-1}+20\hat\b_{i-1},\, \text{and}\\
\nu_i&:=\nu\left(B_{\tilde h_i(\tilde t_{i})}\left(z, \pr{10- \gamma_i-2\sqrt{\tilde t_{i}}}\hat\a\right),\tilde h_i(\tilde t_{i}),\tau+1-\tilde t_{i}\right),
\end{align*}
where $\gamma_i$ is defined inductively starting from the final data below. 
Here $\sigma$ is the constant defined in the beginning of the proof. 
Recall that we also set $t_{-1}:=0.$
We also define the "initial data": 
\begin{align*}
\hat \b_{\tilde t}&:= \sigma\,  \tilde t^{1/5},\\
\gamma_{\tilde t}&:= 10(1-2\hat \b_{\tilde t})-2\sqrt{\tilde t}-L\hat \a^{-1}\sqrt{\a_0},\,\\
 \gamma_{\ell}&:= \gamma_{\tilde t}+20\hat\b_{\tilde t},\, \text{and}\\
\nu_{\tilde t}&:= { \nu\left(B_{\tilde h_{\ell}(\tilde t)}\left(z, \pr{10- \gamma_{\tilde t}-2\sqrt{\tilde t}}\hat\a\right),\tilde h_{\ell}(\tilde t),\tau+1-\tilde t\right).}
\end{align*}
The definition of $\gamma_{\tilde t}$ implies 
\begin{equation}\label{eqn:choice-initial}
10\pr{1-2\hat \b_{\tilde t}}-\gamma_{\tilde t}-2\sqrt{\tilde t}= L\hat \a^{-1}\sqrt{\a_0}=5.
\end{equation}

Since
$B_{g_0}(z,4L\sqrt{\a_0t_k})\Subset B_{g_0}(x_0,r_k-4L\sqrt{\a_0t_k})$
where ${ g(t)= h_i(t)}$ for $t\in [t_i,t_{i+1}]$ { from Claim~\ref{claim:extension} and the remark after it}, Lemma~\ref{l-balls} implies that for $t\in [t_i,t_{i+1}]$,
$$B_{ g(t)}\left( z,3L\sqrt{\a_0t_k} \right)\Subset B_{g_0}(x_0,r_i-4L\sqrt{\a_0t_i})$$
and thus, 
\begin{equation*}
\nu_i=\nu\left(B_{\tilde g(\tilde t_{i})}\left(z, \pr{10- \gamma_i-2\sqrt{\tilde t_{i}}}\hat\a\right),\tilde g(\tilde t_{i}),\tau+1-\tilde t_{i}\right).
\end{equation*}
By the monotonicity of the $\nu$-entropy \cite{Wang2018}*{Propositions 2.1}, thanks to $t_k\leq \hat T$ and our choice of $\hat T$, we have
\begin{equation}\label{eqn:initi-nu}
\begin{split}
\nu_{-1}&\geq \nu\left(B_{\tilde g_0}\left(z, 10\hat\a\right),\tilde g_0,\tau+1\right)\\
&=\nu\left(B_{ g_0}\left(z, 2L\sqrt{\a_0 t_k}\right), g_0,\left(\frac14 L^2\a_0+\mu+1\right)t_k\right)\\
&\geq \nu\left(B_{g_0}(x_0,1),g_0,2) \right)\geq -A_0.
\end{split}
\end{equation}

{ 

We show that $\sigma$ is chosen so that the assumptions on $\hat \b_i$ and $\gamma_i$ in \eqref{eqn:monot} are justified. Clearly, $\hat \b_i\leq  \sigma (1+\mu)^{1/5}\leq 10^{-2}$ if $\sigma\leq 10^{-3}$. We now estimate $\gamma_i$. The upper bound is trivial from decreasing property so that $\gamma_i\leq 5<8-20\hat \b_i$. To estimate the lower bound, we claim that $\gamma_{-1}>0$.
\begin{equation}\label{eqn:est-low-gamma}
    \begin{split}
    \gamma_{\tilde t}-\gamma_{-1}&=\gamma_{\tilde t}-\gamma_{\ell} +\sum^\ell_{i=0} (\gamma_{i}-\gamma_{i-1}) \\
    &\leq 20 \sigma\cdot  \sum_{i=0}^{k} (\tilde t_{k-i})^{1/5}\\
    &\leq 20\sigma \sum_{i=0}^\infty (1+\mu)^{-i/5}= \frac{20\sigma}{1-(1+\mu)^{-1/5}}.
    \end{split}
\end{equation}
From \eqref{eqn:choice-initial}, the definition of $\hat\beta_{\tilde t},$ and $\tilde t\le 1+\mu,$ we have
\begin{equation}
    5\leq 2(1+\mu)^{1/2}+20\sigma (1+\mu)^{1/5}+\gamma_{\tilde t}
\end{equation}
so that \eqref{eqn:est-low-gamma} implies 
\begin{equation}
   \gamma_{i}\geq \gamma_{-1}\geq 5-2(1+\mu)^{1/2}-20\sigma (1+\mu)^{1/5}-\frac{20\sigma}{1-(1+\mu)^{-1/5}}>1
\end{equation}
for all $-1\leq i\leq \ell$ by our choice of $\mu$. 
}

{Without loss of generality, we might assume $\tilde t=\tilde t_{\ell+1}$.} By our choices of $\hat \b_i$ and $\gamma_i$, \eqref{eqn:monot} implies that for all $ -1\leq i\leq \ell$,
\begin{equation*}
\begin{split}
\nu_{i+1}-\nu_i &\geq -\left( \frac{(1+\tau)}{ 10\sigma^{2}\hat \a ^2(\tilde t_i)^{2/5}}+\frac1e\right)\cdot \left[ \exp\left(\frac{\mu (\tilde t_i)^{3/5}}{ 10\sigma^2\hat \a ^2}\right)-1\right].
\end{split}
\end{equation*}
Since $  10^{-1}\mu\hat \a^{-2} 
\sigma^{-2}\tilde t_i^{3/5}  \leq \frac{25}{10}\mu L^{-2}\a_0^{-1}\sigma^{-2} \leq 1$, 
{ using $e^z-1\le 2z$ for $z\in (0,1),$}
we can simplify it as 
\begin{equation*} 
\begin{split}
\nu_{i+1}-\nu_i &\geq -2\left( \frac{10(1+\mu+\frac14 L^2\a_0)}{ \sigma^2  L^2 \a_0(\tilde t_i)^{2/5}}+\frac1e\right)\cdot \left(  \frac{10\mu (\tilde t_i)^{3/5}}{ \sigma^2L^2\a_0}\right) \\
&\geq -10^{4}\sigma^{-2}L^{-2}\a_0^{-1} (1+\mu)^{(i-\ell)/5}.
\end{split}
\end{equation*}
Combining it with \eqref{eqn:initi-nu}, we conclude that 
\begin{equation*}
\begin{split}
\nu_{\tilde t}&\geq \nu_{-1}-10^{4}\sigma^{-2}L^{-2}\a_0^{-1}\sum_{j=0}^{\ell+1} (1+\mu)^{-j/5}\\
&\geq -A_0- \frac{10^{4}\sigma^{-2}}{1-(1+\mu)^{-1/5}}\cdot L^{-2}\a_0^{-1}\\
&\geq -A_0-1
=-A.
\end{split}
\end{equation*}
By \eqref{eqn:choice-initial} and the monotonicity of the $\nu$-entropy \cite{Wang2018}*{Propositions 2.1} again, this proves (III) in $\mathcal{P}(k+1)$ holds. 
By induction, this shows that $\mathcal{P}(k)$ is true whenever $r_k>0$ and $t_k\leq \hat T$.

We let $k_0$ be the maximal $k$ such that $r_k\geq 1+\bar r$, i.e., $r_{k_0+1}<1+\bar r\leq r_{k_0}$. 
We claim that $t_{k_0}$ is uniformly bounded from below. 
To see this, by the construction, we have
\begin{equation*}
    \begin{split}
      1+\bar r&>  r_{k_0+1}=r_0+\sum^{k_0}_{i=0} (r_{i+1}-r_i)\\
      &=2+\bar r -\rho_0-8L\sqrt{\a_0}\cdot \sum^{k_0}_{i=0} \sqrt{t_{k_0-i}}\\
      &\geq \frac32 +\bar r-8L\sqrt{\a_0 t_{k_0}}  \cdot \sum^{\infty}_{i=0} (1+\mu)^{-i/2}.
    \end{split}
\end{equation*}
Thus, $t_{k_0}\geq \tilde T(n,v_0,A)>0$. 
Replacing $\hat T$ with $\hat T\wedge \tilde T$, we prove the existence with (1), (3), and (4) on $B_{g_0}(x_0,1+\bar r)$ using $\mathcal{P}(k_0)$ and the monotonicity of entropy (on scales and domains) after relabelling the constants.  
The injectivity radius lower bound (2) follows from (4) and  the result of Cheeger--Gromov--Taylor  \cite{CheegerGromovTaylor1982}. 
By enlarging $\Lambda_1$ if necessary, the H\"older equivalence of distances follows from the proof of \cite[Claim 3.2]{Martens2025-2}; see also the proof of \cite[Lemma 5.1]{ChanHuangLee2024}.

With the existence of Ricci flow, the general case where $||\Rm(g_0)||_{L^{n/2}(B_{g_0}(x_0,1),g_0)}\leq \e <\e_1$ follows from Lemma~\ref{lma:local-total-pre} and Proposition~\ref{prop:improved-pt-esti} after we shrink $\hat T$ if necessary; see the proof of Proposition~\ref{prop:improved-pt-esti}.
\end{proof}

By letting $\bar r\to+\infty$, we obtain a short-time solution to the Ricci flow on $M\times [0,\hat T]$ with the desired estimates.
\begin{thm}\label{thm:short-time}
For $n\geq 3$ and $v_0,C_s>0$, there exist $\delta_1(n,v_0,C_s),\Lambda_1(n,v_0,C_s),L(n,v_0,C_s),$ and $\hat T(n,v_0,C_s)>0$, such that the following holds. Suppose $(M^n,g_0)$ is a complete non-compact manifold such that for all $x\in M$,
\begin{enumerate}
\item[(a)] $ \mathrm{Vol}_{g_0}\left(B_{g_0}(x,r)\right)\leq v_0 r^n$ for all $r\in (0,1]$;
\item[(b)]  for all $u\in W^{1,2}_0( B_{g_0}(x,1), g_0)$,
$$\left(\int_{ B_{g_0}(x,1)} u^\frac{2n}{n-2}\,d\mathrm{vol}_{g_0}\right)^\frac{n-2}{n}\leq C_s \int_{ B_{g_0}(x,1)} \left(|\nabla u|^2+u^2\right)\, d\mathrm{vol}_{g_0};$$
\item[(c)] $||\Rm(g_0)||_{L^{n/2}(B_{g_0}(x,1),g_0)}\leq \e$ for some $\e\in (0,\delta_1]$. 
\end{enumerate} 
Then there exists a smooth complete Ricci flow solution $g(t)$  on $M\times [0,\hat T]$ such that $g(0)=g_0$ and 
\begin{enumerate}
\item $|\Rm(g(t))|\leq \Lambda_1 \e t^{-1}$;
\item $\mathrm{inj}(g(t))\geq \sqrt{t}$;
\item $||\Rm(g(t))||_{L^{n/2}(B_{g(t)}(x,1/2))}\leq \Lambda_1\e $;
\item $\nu \left(B_{g(t)}(x,2L\sqrt{\hat T},g(t),L^2\hat T \right)\geq -\Lambda_1$
\end{enumerate}
for all $(x,t)\in M\times (0,\hat T]$.
\end{thm}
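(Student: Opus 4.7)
The plan is to deduce Theorem~\ref{thm:short-time} from Theorem~\ref{thm:localRF} by an exhaustion argument: apply the local smoothing with $\bar r$ tending to infinity at a fixed basepoint and extract a limiting Ricci flow.

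\textbf{Exhaustion and uniform bounds.} Fix $x_0 \in M$ and a sequence $\bar r_i \to +\infty$. Since $(M,g_0)$ is complete, $B_{g_0}(x_0,3+\bar r_i)\Subset M$, and the global hypotheses (a)--(c) of Theorem~\ref{thm:short-time} imply, at every $x\in B_{g_0}(x_0,2+\bar r_i)$, the ball-wise hypotheses of Theorem~\ref{thm:localRF}. The local smoothing then produces a smooth Ricci flow $g_i(t)$ on $B_{g_0}(x_0,1+\bar r_i)\times[0,\hat T]$ with $g_i(0)=g_0$ and conclusions (1)--(4) together with the H\"older distance equivalence, all with constants $\delta_1,\Lambda_1,L,\hat T$ depending only on $n,v_0,C_s$---hence uniform in $i$.

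\textbf{Compactness and limit flow.} Combining the uniform bound $|\Rm(g_i(t))|\le\Lambda_1\e t^{-1}$ with Shi's interior derivative estimates gives, for every compact $K\subset M$ and every $\tau\in(0,\hat T)$, uniform $C^k$ bounds on $g_i(t)$ over $K\times[\tau,\hat T]$ once $i$ is large enough that $K\subset B_{g_0}(x_0,1+\bar r_i)$ with a definite $g_0$-distance buffer to the boundary. Together with the injectivity radius bound $\mathrm{inj}(g_i(t))\ge\sqrt t$, an Arzel\`a--Ascoli/Cheeger--Gromov--Hamilton diagonal compactness argument extracts a subsequence converging in $C^\infty_{\mathrm{loc}}(M\times(0,\hat T])$ to a smooth Ricci flow $g(t)$ on all of $M$.

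\textbf{Initial data, completeness, and transfer of the four estimates.} The H\"older distance equivalence from Theorem~\ref{thm:localRF}, applied uniformly at every basepoint $x\in M$, descends to the limit; it forces completeness of $(M,g(t))$ from that of $(M,g_0)$ and shows $d_{g(t)}\to d_{g_0}$ locally uniformly as $t\to 0^+$, realizing the initial condition $g(0)=g_0$ in the distance sense. Conclusions (1)--(3) pass to the limit by the $C^\infty_{\mathrm{loc}}$ convergence (using that the balls $B_{g(t)}(x,1/2)$ are precompact in $(M,g(t))$), and (4) passes by lower semicontinuity of the localized $\nu$-functional under smooth convergence of metrics.

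The main technical subtlety is the second step: the pre-limit flows are incomplete, defined only on the growing balls $B_{g_0}(x_0,1+\bar r_i)$, so Hamilton's classical compactness theorem for Ricci flows does not apply off the shelf. What makes the diagonal argument run is precisely that the constants in Theorem~\ref{thm:localRF} depend only on $n,v_0,C_s$ and not on $\bar r$; the H\"older distance equivalence is then the key tool that bridges the singular $t\to 0^+$ limit (where the curvature bound $|\Rm|\lesssim t^{-1}$ is not time-integrable) back to the prescribed initial metric $g_0$.
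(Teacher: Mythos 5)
Your proof takes essentially the same route as the paper's: fix a basepoint, apply Theorem~\ref{thm:localRF} with $\bar r_i\to+\infty$, and extract a subsequential limit via interior Shi-type derivative estimates --- which is exactly what the paper's citations to \cite{Chen2009} and \cite{ChowBook} supply. The only cosmetic difference is that you invoke the injectivity-radius bound and Cheeger--Gromov--Hamilton compactness where a plain Arzel\`a--Ascoli argument for the tensor fields $g_i(t)$ on the fixed manifold $M$ (all flows share the same underlying manifold and the same initial metric $g_0$, so no pullback diffeomorphisms are needed) already suffices; your H\"older-distance treatment of $g(0)=g_0$ and your observation that the localized $\nu$-lower bound passes to the limit by testing against fixed compactly supported functions are correct justifications that the paper leaves implicit.
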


\begin{proof}
Let $x_0\in M$ and $\bar r_i\to+\infty$. 
By Theorem~\ref{thm:localRF}, there exists a sequence of Ricci flows $g_i(t),t\in [0,\hat T]$ such that (1)-(4) hold for all $x\in B_{g_0}(x_0,1+\bar r_i)\times (0,\hat T]$. 
By \cite[Corollary 3.2]{Chen2009} and \cite[Theorem 14.16]{ChowBook}, $g_i(t)$ sub-converges to a Ricci flow $g(t)$ on $M\times [0,\hat T]$ in the local smooth topology as $i\to+\infty$. 
\end{proof}

\begin{rem}\label{rem:allscale}
In case $\bar r=+\infty$, we only manage to control the small local curvature concentration in the suitable scale; see Proposition~\ref{prop:improve-local-total-pre}. 
In contrast with the bounded curvature case, the small global curvature concentration is preserved under bounded curvature Ricci flow. 
It is unclear to us whether there is any jumping phenomenon of the curvature concentration in the case the curvature is initially unbounded. 
\end{rem}

\section{\bf Compactness of manifolds with bounded curvature concentration}
\label{sec:compactness}

In this section, we use the local smoothing result from Theorem~\ref{thm:localRF} to discuss the compactness of the space of compact manifolds with bounded curvature concentration, i.e., Theorem~\ref{intro:main-2}. 
We start with a simple observation that the collection of manifolds satisfying the assumptions in Theorem~\ref{intro:main-2} is pre-compact with respect to the Gromov--Hausdorff topology.

\begin{lma}\label{lma:doubling}
Suppose $(M,g)$ is a compact manifold such that 
\begin{enumerate}
    \item there exists $C_s>0$ so that for all $u\in W^{1,2}(M)$,
$$\left(\int_M u^\frac{2n}{n-2}\,d\mathrm{vol}_{g} \right)^\frac{n-2}{n}\leq C_s\int_M \left(|\nabla u|^2+ u^2\right) \,d\mathrm{vol}_{g} ;$$
\item $\mathrm{diam}(M,g)\leq D_0$.
\end{enumerate}
Then there exists $v_1(n,C_s,D_0)>0$ such that for all $0<r\leq \mathrm{diam}(M,g)$ and $x\in M$, $$\mathrm{Vol}_g\left(B_g(x,r) \right)\geq v_1 r^n.$$
\end{lma}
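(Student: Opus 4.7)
Plan: The strategy is to extract volume non-collapsing from the Sobolev inequality through a two-regime analysis, splitting by whether $\mathrm{Vol}_g(B_g(x,r))$ lies above or below the natural threshold $(2C_s)^{-n/2}$, in the spirit of \cite{Martens2025}*{Lemma 3.1}. First, testing the Sobolev inequality with $u\equiv 1$ immediately gives $\mathrm{Vol}_g(M)^{(n-2)/n}\leq C_s\,\mathrm{Vol}_g(M)$, hence $\mathrm{Vol}_g(M)\geq C_s^{-n/2}$. Whenever $\mathrm{Vol}_g(B_g(x,r))\geq (2C_s)^{-n/2}$, the diameter bound $r\leq D_0$ already yields $\mathrm{Vol}_g(B_g(x,r))\geq \bigl((2C_s)^{-n/2}D_0^{-n}\bigr)\,r^n$, disposing of this ``large ball'' regime.

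In the complementary regime $\mathrm{Vol}_g(B_g(x,r))<(2C_s)^{-n/2}$, the plan is to upgrade the global Sobolev inequality to a pure Euclidean-type inequality on the ball via an absorption argument. For $u\in W^{1,2}_0(B_g(x,r))\hookrightarrow W^{1,2}(M)$ (zero extension), H\"older's inequality on $\mathrm{supp}\, u$ gives
$\|u\|_{L^2}^2\leq \mathrm{Vol}_g(B_g(x,r))^{2/n}\|u\|_{L^{2n/(n-2)}}^2<(2C_s)^{-1}\|u\|_{L^{2n/(n-2)}}^2$; substituting into the Sobolev hypothesis and absorbing produces the Euclidean-type inequality $\|u\|_{L^{2n/(n-2)}}^2\leq 2C_s\,\|\nabla u\|_{L^2}^2$ on $B_g(x,r)$. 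The Federer--Fleming truncation/coarea argument then converts this into the isoperimetric inequality $\mathrm{Vol}_g(\Omega)^{(n-1)/n}\leq C_1\,P_g(\Omega)$ for smooth $\Omega\Subset B_g(x,r)$ with $C_1=C_1(n,C_s)$. Specialising $\Omega=B_g(x,\rho)$ for $\rho<r$ and using the coarea identity $P_g(B_g(x,\rho))=\tfrac{d}{d\rho}\mathrm{Vol}_g(B_g(x,\rho))$ gives
$\mathrm{Vol}_g(B_g(x,\rho))^{(n-1)/n}\leq C_1\tfrac{d}{d\rho}\mathrm{Vol}_g(B_g(x,\rho))$ for a.e.\ $\rho$, i.e.\ $\bigl(\mathrm{Vol}_g(B_g(x,\rho))^{1/n}\bigr)'\geq 1/(nC_1)$, whose integration from $\rho=0$ to $\rho=r$ (using $\mathrm{Vol}_g(B_g(x,\rho))\uparrow \mathrm{Vol}_g(B_g(x,r))$ as $\rho\uparrow r$) yields $\mathrm{Vol}_g(B_g(x,r))\geq (r/(nC_1))^n$.

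The main obstacle is the Federer--Fleming step, since the $L^2$ Sobolev inequality does not give the isoperimetric form by a single choice of test function: a direct plug-in of $u=(r-d(x,\cdot))_+$ only produces an \emph{upper} bound on $\mathrm{Vol}_g(B_g(x,r/2))$ in terms of $\mathrm{Vol}_g(B_g(x,r))$, which is the wrong direction. One must either derive an intermediate $L^1$ Sobolev inequality $\|f\|_{L^{n/(n-1)}}\leq C\|\nabla f\|_{L^1}$ by applying the $L^2$ version to power functions $|f|^{\alpha}$ and then test with sigmoidal approximations of $\mathbf{1}_\Omega$, or equivalently pass through heat-kernel arguments (ultracontractivity from Nash, a Gaussian upper bound, and the split $\int_{B_g(x,r)}p_t(x,y)\,dy\geq 1/2$ for $t\lesssim r^2$). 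Combining the two regimes with $v_1:=\min\{(nC_1)^{-n},\,(2C_s)^{-n/2}D_0^{-n}\}$ then completes the proof.
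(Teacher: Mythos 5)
The paper's own proof is a one-line citation to Ye \cite{Ye2015}*{Lemma 6.1}, so you are reconstructing that result from scratch. Your two-regime split (large-ball regime from $r\le D_0$, small-ball regime by absorbing the zeroth-order term via H\"older) is correct and is the standard way to reduce to a pure Euclidean-type $L^2$ Sobolev inequality on small balls. The problem is the step you yourself flag as the obstacle.

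The difficulty is real and your proposed Fix 1 does not close it. An $L^2$ Sobolev inequality (equivalently a Faber--Krahn inequality) does \emph{not} imply an $L^1$ Sobolev / isoperimetric inequality on a general Riemannian manifold: the implication only runs $L^1\Rightarrow L^p$, not the reverse. Your suggestion of applying the $L^2$ inequality to $|f|^\alpha$ produces $\int |f|^{2\alpha-2}|\nabla f|^2$ on the right, not $\bigl(\int|\nabla f|\bigr)^2$, and no choice of $\alpha$ fixes this. So the chain ``Euclidean $L^2$ Sobolev on $B_g(x,r)$ $\Rightarrow$ isoperimetric $\Rightarrow$ ODE for $\mathrm{Vol}(B_g(x,\rho))^{1/n}$'' as you present it has a genuine gap, and the claim that the two alternatives are ``equivalent'' is misleading.

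Your Fix 2 (Nash inequality from interpolation, on-diagonal ultracontractive heat-kernel bound $p_t(x,x)\le c(n)C_s^{n/2}t^{-n/2}$, Davies--Gaffney off-diagonal decay, and Cauchy--Schwarz with $\int_{B_g(x,r)}p_t(x,\cdot)\ge 1/2$ at $t\sim r^2$) is valid and does complete the proof; if you promote that to the main argument, the proposal works. There is also a more elementary route you missed: the ``wrong-direction'' inequality $\mathrm{Vol}(B_g(x,r/2))^{(n-2)/n}\le C r^{-2}\mathrm{Vol}(B_g(x,r))$ from plugging in a cutoff can be iterated along dyadic scales $r_k=r2^{-k}$; since the manifold is smooth, $\mathrm{Vol}(B_g(x,\rho))/\rho^n\to\omega_n$ as $\rho\to 0$, so $\lambda^k\log\mathrm{Vol}(B_g(x,r_k))\to 0$ with $\lambda=(n-2)/n<1$, and telescoping the recursion gives $\mathrm{Vol}(B_g(x,r))\ge c(n)C_s^{-n/2}r^n$ directly, without any isoperimetric inequality or heat kernel. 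Either of these two correct routes, substituted for the isoperimetric detour, yields a complete self-contained proof of the lemma that the paper only cites.
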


\begin{proof}
This follows from \cite[Lemma 6.1]{Ye2015}. 
\end{proof}

\begin{lma}\label{lma:compact}
The collection of compact manifolds satisfying (a), (b), and (d) in Theorem~\ref{intro:main-2} is pre-compact in the Gromov--Hausdorff topology.
\end{lma}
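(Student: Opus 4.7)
The plan is to verify Gromov's pre-compactness criterion for the family $\{(M_i,g_i)\}$: since the diameter is uniformly bounded by (d), it suffices to uniformly bound, for every $\e>0$, the number of $\e$-balls needed to cover $M_i$. A standard packing argument reduces this in turn to (A) a uniform lower bound on the volume of small balls and (B) a uniform upper bound on the total volume $\mathrm{Vol}_{g_i}(M_i)$. For (A), I would apply Lemma~\ref{lma:doubling} directly (using only (a) and (d)) to obtain the Ahlfors-type lower bound $\mathrm{Vol}_{g_i}(B_{g_i}(x,r))\geq v_1 r^n$ for all $x\in M_i$ and $0<r\leq D_0$, with $v_1=v_1(n,C_s,D_0)>0$. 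Combined with (b), this yields uniform Ahlfors $n$-regularity at scales $r\leq 1$.

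The crux is step (B). Here I would use the Sobolev inequality (a) to derive an isoperimetric-type inequality on each $M_i$, and apply it to the sublevel sets of the distance function $d_{g_i}(\cdot,x_0)$. Setting $V(t)=\mathrm{Vol}_{g_i}(B_{g_i}(x_0,t))$, the coarea formula combined with the isoperimetric inequality produces a differential inequality roughly of the form $V'(t)\gtrsim \min(V(t),\mathrm{Vol}(M_i)-V(t))^{(n-1)/n}$, possibly with lower-order corrections from the $\|u\|_2^2$ term in (a). Combined with the initial datum $V(1)\geq v_1$ from Step (A) and the boundary condition $V(D_0)=\mathrm{Vol}_{g_i}(M_i)$, integrating this inequality should yield $\mathrm{Vol}_{g_i}(M_i)\leq V_0(n,C_s,v_0,D_0)$. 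An alternative route is to cover $M_i$ by unit balls using the upper bound in (b) and iterate the small-scale doubling constant $2^n v_0/v_1$ up to the diameter scale to bound the covering number directly.

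With (A) and (B) in hand, a maximal $\e$-separated net $\{x_j\}_{j=1}^N\subset M_i$ yields disjoint balls $B_{g_i}(x_j,\e/2)\subset M_i$ whose total volume satisfies $N v_1(\e/2)^n\leq V_0$, so $N\leq V_0/(v_1(\e/2)^n)$ uniformly in $i$. Since the balls $B_{g_i}(x_j,\e)$ cover $M_i$, Gromov's pre-compactness theorem then gives the desired subsequential Gromov--Hausdorff convergence. The main obstacle is (B): without any Ricci-type assumption, the classical Bishop--Gromov comparison is unavailable, and the Sobolev inequality must carry the entire burden of passing from local volume control to a global bound. Extracting a sufficiently strong isoperimetric inequality from the subcritical form of (a), uniformly across the family, is the nontrivial step.
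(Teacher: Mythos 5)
Your overall strategy is the same as the paper's: use Lemma~\ref{lma:doubling} (which needs only (a) and (d)) to obtain the Ahlfors-type lower bound $\mathrm{Vol}_{g}(B_g(x,r))\ge v_1 r^n$ for $r\le \mathrm{diam}(M,g)$, combine with (b), and then invoke Gromov's pre-compactness via a packing argument. The paper compresses all of this into one sentence (``uniformly volume doubling, then Gromov''), so the real comparison is in how you propose to fill in the total-volume/covering-number estimate that the paper leaves implicit. You are right that this is the crux, and you are right to be uneasy about it.

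Your primary route (B1), however, does not work as written. The inequality $V'(t)\gtrsim V(t)^{(n-1)/n}$ is the $L^1$-isoperimetric inequality applied to sublevel sets of the distance, and that is equivalent to the $W^{1,1}$-Sobolev inequality, \emph{not} to the $W^{1,2}$-Sobolev inequality in hypothesis (a). Plugging (mollified) indicators of $B(x_0,t)$ into (a) produces $\|\nabla u\|_{L^2}^2$, which scales like $\mathrm{Area}(\partial B_t)/\delta$ as the mollification width $\delta\to 0$ and therefore gives no boundary-area control; what (a) actually yields, after testing with Lipschitz cutoffs, is a \emph{lower} volume bound of the Faber--Krahn type (this is precisely how Lemma~\ref{lma:doubling} is proved), and not an upper one. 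So the differential inequality in (B1) is not available, and the deduction $\mathrm{Vol}_{g_i}(M_i)\le V_0$ does not follow from it.

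The alternative route (B2) is the right direction, but ``iterate the small-scale doubling constant up to the diameter scale'' needs unpacking: knowing $\mathrm{Vol}(B(x,2r))\le (2^n v_0/v_1)\,\mathrm{Vol}(B(x,r))$ for $r\le 1/2$ does \emph{not} by itself give doubling or covering control at scales between $1$ and $D_0$, since (b) gives no upper volume bound on balls of radius larger than $1$. The missing ingredient is a chaining/nerve-graph argument using connectivity of $M_i$ together with the diameter bound: a maximal $s$-separated net ($s\le 1/4$, say) has bounded degree in the nerve graph because the local Ahlfors regularity bounds the packing number inside any ball of radius $O(s)\le 1$, and the nerve graph has metric-graph diameter $\lesssim D_0/s$ because $M_i$ is a geodesic space of diameter $\le D_0$; a connected bounded-degree graph of bounded diameter has a bounded number of vertices, which simultaneously bounds the $s$-covering number and, via (b), the total volume $\mathrm{Vol}_{g_i}(M_i)$. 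With that in place your final packing step goes through, and the argument agrees in substance with what the paper sweeps under ``uniform volume doubling'' plus Gromov.
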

\begin{proof}
By Lemma~\ref{lma:doubling} and (b) in Theorem~\ref{intro:main-2}, a manifold $(M,g)$ in the collection is uniformly volume doubling. 
The conclusion then follows from Gromov's compactness Theorem \cite{Gromov1999}.
\end{proof}

The basic idea in proving the compactness without Ricci geometry is based on finding a local homeomorphism from an open ball around a regular point to an open ball in a smooth manifold. 
This is based on mollifying the initial metrics locally using Theorem~\ref{thm:localRF} to a metric with regularity.

\begin{proof}[Proof of Theorem~\ref{intro:main-2}]
We will use $g_{i,0}$ to denote $g_i$ to emphasize that it is the initial metric whenever smoothing is used. 
We closely follow the argument in \cite{Nakajima1988} with modifications using ideas from \cite{SimonTopping2021} and the new smoothing result, i.e., Theorem~\ref{thm:localRF}. 
It is more convenient to first obtain a convergent sequence in metric geometry. 
By Lemma~\ref{lma:compact}, we may assume $(M_i,g_{i,0})$ converges to a metric space $(X,d_\infty)$ in the Gromov--Hausdorff topology after passing to subsequence. 
We also let $\varphi_i: (M_i,d_{g_{i,0}})\to (X,d_\infty)$ be a sequence of $\e_i$-Gromov--Hausdorff approximations for some $\e_i\to 0$ as $i\to+\infty$.

We let $\delta_1(n,v_0,C_s)$ be the constant from Theorem~\ref{thm:localRF}. 
We define the singular set $\mathcal{S}_r$ of scale $r>0$ by 
\begin{align*}
	\cS_r
	:= \set{
		x\in X:
		\liminf_{i\to\infty} ||\Rm(g_{i,0})||_{L^{n/2}(B_{g_{i,0}}(x_i,r))}\ge \delta_1 
		 \text{ for any }x_i\in M_i \text{ such that } d_\infty(\varphi_i(x_i), x)\to 0
	}
\end{align*}
and we consider
\begin{align*}
	\cS:=\bigcap_{r>0}\cS_r.
\end{align*}

\begin{claim}\label{claim:S-finite}
The set $\cS$ is a finite set. 
That is, $\mathcal{S}=\{p_i\}_{i=1}^N$ for some $N\leq (2\Lambda_0\delta_1^{-1})^{n/2}$. In particular, $\mathcal{S}$ is empty if $\Lambda_0<\delta_1/2$.
\end{claim}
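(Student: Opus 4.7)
The plan is to separate singular points and extract a definite amount of $L^{n/2}$-curvature mass from a small ball around each, then compare against the global bound $\Lambda_0$. Suppose $p_1, \ldots, p_N$ are $N$ distinct points of $\mathcal{S}$. First I would choose $r > 0$ with $4r < \min_{\alpha \neq \beta} d_\infty(p_\alpha, p_\beta)$, so that the balls $B_{d_\infty}(p_\alpha, 2r) \subset X$ are pairwise disjoint.

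Next, for each $\alpha$, pick a sequence $x_{i,\alpha} \in M_i$ with $d_\infty(\varphi_i(x_{i,\alpha}), p_\alpha) \to 0$. Since $\varphi_i$ is an $\varepsilon_i$-Gromov--Hausdorff approximation with $\varepsilon_i \to 0$,
$$d_{g_{i,0}}(x_{i,\alpha}, x_{i,\beta}) \geq d_\infty(\varphi_i(x_{i,\alpha}), \varphi_i(x_{i,\beta})) - \varepsilon_i,$$
and the right-hand side converges to $d_\infty(p_\alpha, p_\beta) > 4r$ as $i \to \infty$. Hence the balls $B_{g_{i,0}}(x_{i,\alpha}, r) \subset M_i$, $\alpha = 1, \ldots, N$, are pairwise disjoint for all $i$ sufficiently large. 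By $p_\alpha \in \mathcal{S} \subset \mathcal{S}_r$ and the definition of $\mathcal{S}_r$, for each $\alpha$,
$$\liminf_{i \to \infty} \|\Rm(g_{i,0})\|_{L^{n/2}(B_{g_{i,0}}(x_{i,\alpha}, r))} \geq \delta_1.$$
Passing to a common subsequence in $i$ (over the finitely many indices $\alpha$), we may assume the left-hand norm is at least $\delta_1/2$ for every $\alpha$ simultaneously, so that
$$\int_{B_{g_{i,0}}(x_{i,\alpha}, r)} |\Rm(g_{i,0})|^{n/2} \, d\mathrm{vol}_{g_{i,0}} \geq (\delta_1/2)^{n/2}.$$

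The final step is to sum over $\alpha$, use the disjointness of the balls, and apply the global bound $\|\Rm(g_i)\|_{L^{n/2}(M_i)} \leq \Lambda_0$ from hypothesis (c) of Theorem~\ref{intro:main-2}:
$$N \pr{\frac{\delta_1}{2}}^{n/2} \leq \sum_{\alpha=1}^N \int_{B_{g_{i,0}}(x_{i,\alpha}, r)} |\Rm(g_{i,0})|^{n/2} \, d\mathrm{vol}_{g_{i,0}} \leq \int_{M_i} |\Rm(g_{i,0})|^{n/2} \, d\mathrm{vol}_{g_{i,0}} \leq \Lambda_0^{n/2}.$$
Rearranging yields $N \leq (2\Lambda_0/\delta_1)^{n/2}$. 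Since this upper bound is independent of the chosen $N$-tuple in $\mathcal{S}$, it follows that $\mathcal{S}$ is finite with cardinality at most $(2\Lambda_0 \delta_1^{-1})^{n/2}$. When $\Lambda_0 < \delta_1/2$, this upper bound is strictly less than $1$, forcing $N = 0$ and hence $\mathcal{S} = \emptyset$.

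I do not anticipate a real obstacle beyond careful bookkeeping: the argument is a pigeonhole count once (i) the balls around Gromov--Hausdorff preimages of distinct singular points are shown to be disjoint, and (ii) the pointwise-in-$\alpha$ $\liminf$ condition is upgraded to a simultaneous lower bound by passing to a common subsequence in $i$.
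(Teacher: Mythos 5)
Your proof is correct and follows essentially the same approach as the paper's: both extract a definite amount of $L^{n/2}$-curvature mass (at least $(\delta_1/2)^{n/2}$) from disjoint balls around Gromov--Hausdorff preimages of singular points and compare against the global bound $\Lambda_0^{n/2}$ to obtain the same packing estimate $N\leq(2\Lambda_0\delta_1^{-1})^{n/2}$. The only cosmetic difference is that the paper organizes the count via a maximal $2r$-separated net covering $\mathcal{S}$ with cardinality bound independent of $r$, whereas you bound an arbitrary $N$-tuple directly; these are interchangeable.
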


\begin{proof}[Proof of Claim~\ref{claim:S-finite}]
For any fixed $r>0,$ we take a collection $\{x^j\}_{j\in J}\subseteq\cS$  such that $\set{B_{d_\infty}(x^j, 2r):j\in J}$ is a disjoint collection of balls satisfying
\begin{align*}
	\cS\sbst \bigcup_{j\in J} B_{d_\infty}\pr{x^j, 3r}.
\end{align*}
For each $j\in J$, $x^j\in \mathcal{S}_r$ for $r>0$. 
We can find an approximating sequence $x^j_i\in M_i$ of $x^j$ such that $d_\infty\pr{\varphi_i(x_i),x}\to 0$ and for large enough $i,$ we have
\begin{align*}
	\left(\int_{B_{g_i}\pr{x^j_i,r}} |\Rm(g_{i,0})|^{n/2} d\mathrm{vol}_{g_{i,0}} \right)^{2/n} > \frac12\delta_1 
\end{align*}
By the disjoint property in $X$ and the approximating property of $x^j_i$'s, we know that for large $i,$
$\set{B_{g_i}(x_i^j, r):j\in J}$ is a disjoint collection in $M_i.$
Hence, in particular, we derive
\begin{align*}
	|J|
	\le (2\Lambda_0 \delta_1^{-1})^{n/2}
\end{align*}
where $\Lambda_0$ is from the assumption of Theorem~\ref{intro:main-2}.
This bound is independent of $r$ and it particularly implies
\begin{align*}
	|\cS|\le   (2\Lambda_0 \delta_1^{-1})^{n/2}.
\end{align*}
This proves the claim.
\end{proof}

To show that the topological space $M$ induced by $(X\setminus \mathcal{S},d_\infty)$ is in fact a manifold with bi-H\"older charts, we must show that given an arbitrary $x\in X\setminus\mathcal{S}$, there is a neighborhood of $x$ that is bi-H\"older homeomorphic to  an open subset of a complete Riemannian manifold. 
We follow the treatment in \cite{SimonTopping2021}. 

Let $x\in X\setminus\mathcal{S}$. 
Then there exist $r>0$ and $x_i\in M_i$ such that $d_\infty(\varphi_i(x_i),x)\to 0$ and 
\begin{equation*}
||\Rm(g_{i,0})||_{L^{n/2}(B_{g_{i,0}}(x_i,r))}< \delta_1
\end{equation*}
for large enough $i.$ 
By considering $\tilde g_{i,0}:=(3r)^{-2}g_{i,0}$, we might assume $r=3$. 
By Theorem~\ref{thm:localRF} with $\bar r=0$, there exists a Ricci flow $g_i(t)$ on $B_{g_{i,0}}(x_i,1)\times [0,\hat T]$ such that $g_i(0)=g_{i,0}$ and 
 \begin{enumerate}
\item $|\Rm(g_i(t))|\leq \Lambda_1 \delta_1 t^{-1}$;
\item $\mathrm{inj}(g_i(t))\geq \sqrt{t}$;
\item $||\Rm(g_i(t))||_{L^{n/2}(B_{g_i(t)}(x_i,1/2))}\leq \Lambda_1 \e $;
\item $\nu \left(B_{g_i(t)}(x_i,2L\sqrt{\hat T},g_i(t),L^2\hat T \right)\geq -\Lambda_1$
\end{enumerate}
for all $(x,t)\in B_{g_{i,0}}(x_i,1)\times (0,\hat T]$. Moreover, we might assume that for some $C_1(n,v_0,C_s)>0,\gamma(n,v_0,C_s)\in (0,1),$ we have 
\begin{equation}\label{eqn:dist-comp}
    C_1^{-1} \left(d_{g_i(t)}(x,y)\right)^{1/\gamma}\leq d_{g_{i,0}}(x,y)\leq C_1  \left(d_{g_i(t)}(x,y)\right)^{\gamma}
\end{equation}
for all $x,y\in B_{g_{i,0}}(x_i,1)$ and $t\in [0,\hat T]$.

We now have all necessary ingredients to carry out the proof in \cite[Theorem 1.4 \& Corollary 1.5]{SimonTopping2021}. 
We only give a sketch. 
We use Shi's estimates and apply smooth compactness on $g_i(\hat T)$ to obtain a family of local diffeomorphisms $F_i$'s such that $F_i^*g_i(t)$ sub-converges to $g_\infty(t)$ on $B_{g_\infty(\hat T)}(x_\infty,\sigma)$ for $\sigma\leq C_1^{-1}\sigma^{-\gamma}$. 
Using $F_i$, we might use the argument in the proof of \cite[Theorem 1.4]{ChanChenLee2022} to show that $F_i^*d_{g_{i,0}}$ sub-converges to a distance function $\tilde d_\infty$ on $B_{g_\infty(\hat T)}(x_\infty,\sigma)$ which is locally bi-H\"older to the distance $d_{g_\infty(\hat T)}$ thanks to \eqref{eqn:dist-comp}. 
The topological manifold structure nearby $x$ follows from the uniqueness of the pointed Gromov--Hausdorff limits, which identifies $\tilde d_\infty$ nearby $x_\infty$ with $d_\infty$ nearby $x$.
\end{proof}

\begin{rem}
From Claim~\ref{claim:S-finite},  when $\Lambda_0$ is sufficiently small, then $\mathcal{S}$ is empty. In this case from the proof of Theorem~\ref{intro:main-2}, $g_i(t)$ exists globally on $M_i\times [0,\hat T]$. In particular, we can construct $X$ by applying smooth compactness on $(M_i,g_i(\hat T))$ so that $X$ admits a smooth structure. 
\end{rem}

\begin{rem}\label{rem:X-smooth-mfd}
Using the H\"older equivalence \eqref{eqn:dist-comp}, it should not be difficult to show that $X\setminus \mathcal{S}$ in Theorem~\ref{intro:main-2} is a $C^\a$ manifold for some $\a\in (0,1)$. 
In general, we conjecture that $X\setminus \mathcal{S}$ is always a smooth manifold. 
\end{rem}

\section{\bf Application to topological gap theorems}
\label{sec:gap}

In this section, we will use  Theorem~\ref{thm:localRF} to prove the gap theorem in differentiable structure of manifolds with small curvature concentration, i.e., Theorem~\ref{intro:main}.  
We start with the long-time existence. 
If the initial metric is assumed to have bounded curvature, this was obtained earlier by \cite{ChauMarten2024}; see also \cite{Chen2022}.

\begin{thm}\label{thm:immortal-RF}
For $n\geq 3$ and $C_s>0$, there exist $\delta_0(n,C_s),\Lambda(n,C_s)>0$ such that  if  $(M^n,g_0)$ is a complete non-compact manifold  satisfying (a)-(c) in Theorem~\ref{intro:main}, then there exists a unique complete Ricci flow $g(t)$ on $M\times [0,+\infty)$ such that $g(0)=g_0$ and 
\begin{enumerate}
\item[(i)] $|\Rm(g(t))|\leq  \Lambda\e t^{-1}$;
\item[(ii)]  $\mathrm{inj}(g(t))\geq \sqrt{ t}$;
\item[(iii)] $||\Rm(g(t))||_{L^{n/2}(M,g(t))}\leq \e$;
\item[(iv)] $\nu(M,g(t))\geq -\Lambda$.
\end{enumerate}
Furthermore, we have $\sup_M|\Rm(g(t))|=o(1)/t$ as $t\to +\infty$. 
\end{thm}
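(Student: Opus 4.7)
The plan is to combine the short-time existence in Theorem~\ref{thm:short-time} with a parabolic rescaling bootstrap, exploiting the fact that hypotheses (a)--(c) are all scaling invariant. Applied directly, the global Euclidean-type Sobolev inequality in (a) implies on each unit ball the weaker Sobolev inequality with $u^2$ correction needed for Theorem~\ref{thm:short-time}, which together with (b) and (c) yields a complete Ricci flow $g(t)$ on $M\times [0,\hat T]$ satisfying (i)--(iv) for some short time.

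The heart of the proof is the preservation of (a)--(c) along the flow. For the global $L^{n/2}$ estimate, the lack of the $u^2$ correction in (a) removes the need for any spatial cutoff, and the computation underlying Lemma~\ref{lma:local-total-pre} carries over verbatim to the global setting to give
\begin{equation*}
\frac{d}{dt}\int_M |\Rm|^{n/2}\,d\mathrm{vol}_{g(t)}
\le \pr{C_0\|\Rm\|_{L^{n/2}(M)} - C_0^{-1}C_s^{-1}} \pr{\int_M |\Rm|^{\frac{n^2}{2(n-2)}}\,d\mathrm{vol}_{g(t)}}^{\frac{n-2}{n}}.
\end{equation*}
For $\delta_0$ small the prefactor is negative, so $\|\Rm(g(t))\|_{L^{n/2}(M)}$ is non-increasing and (iii) holds. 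The entropy bound (iv) follows from Perelman's monotonicity of $\nu$ along a complete bounded-curvature Ricci flow \cite{ChauTamYu2011}, combined with the equivalence between an all-scale $\nu$-lower bound and a Euclidean-type Sobolev inequality (cf.~\cite{Ye2015}); this simultaneously propagates the Sobolev constant in (a) with only a mild loss. The Euclidean volume growth in (b) at positive times is then recovered by combining the curvature bound in (i), the distance distortion of \cite{SimonTopping2022}*{Lemma 4.1}, and the non-collapsing implied by (iv).

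With these preservation statements, long-time existence follows by iteration. Assuming $g(t)$ exists on $[0,T)$ with (i)--(iv), for any $T_0 \in [\hat T, T)$ the parabolic rescaling $\tilde g_0 := T_0^{-1} g(T_0)$ satisfies (a)--(c) with the same constants by scale invariance and the preceding step, so applying Theorem~\ref{thm:short-time} to $\tilde g_0$ extends the flow to $[0, T_0(1+\hat T)]$. Since $\hat T$ is a fixed constant depending only on $n$, $C_s$, $v_0$, iterating forces $T=+\infty$, while uniqueness on any interval $[\tau, T]\Subset (0,\infty)$ follows from Chen--Zhu with the curvature bound $|\Rm|\le \Lambda\e\tau^{-1}$.

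For the decay $\sup_M|\Rm(g(t))| = o(1)/t$, I would argue by contradiction. If $\sup_M t_k|\Rm(g(t_k))| \ge \delta$ along a sequence $t_k\to\infty$, choose $x_k$ nearly attaining this supremum and parabolically rescale $g_k(s) := t_k^{-1} g(t_k(1+s))$. By the scale-invariant estimates (i)--(iv) and Hamilton's compactness, a subsequence converges smoothly in the pointed sense to a Ricci flow $(M_\infty, g_\infty, x_\infty)$ on a neighborhood of $s=0$ with $|\Rm(g_\infty(0),x_\infty)| \ge \delta/2$, $\nu(g_\infty) \ge -\Lambda$, and $\|\Rm(g_\infty)\|_{L^{n/2}(M_\infty)} \le \e$; a diagonalization for $s<0$ upgrades this to an ancient solution with the same scale-invariant bounds. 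Smallness of $\e$ combined with Theorem~\ref{thm:localRF} applied to $g_\infty$ at very negative times then forces $g_\infty$ to be flat, contradicting the curvature lower bound at $x_\infty$. The main obstacle I foresee is in this last rigidity step, since one must rule out loss of $L^{n/2}$-mass at spatial infinity along the rescaling and verify that the limiting ancient flow on a noncompact manifold genuinely has vanishing curvature.
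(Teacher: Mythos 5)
Your overall strategy is genuinely different from the paper's: you propose a short-time existence plus bootstrap iteration (flow on $[0,\hat T]$, rescale $g(\hat T)$, re-verify the hypotheses, extend, repeat), whereas the paper exploits scale invariance \emph{at the level of initial data}. Since (a)--(c) in Theorem~\ref{intro:main} are all scale invariant and (b) is assumed for all radii, the metrics $g_{i,0}:=R_i^{-2}g_0$ satisfy the hypotheses of Theorem~\ref{thm:localRF}/\ref{thm:short-time} with \emph{identical} constants, so a single application yields a flow on $[0,\hat T]$ that rescales back to a flow on $[0,R_i^2\hat T]$; letting $R_i\to\infty$ and invoking a uniqueness theorem gives the immortal flow directly. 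This sidesteps the principal technical burden in your bootstrap: you would have to show that the Sobolev constant, the volume upper bound, and the curvature-concentration threshold $\delta_0$ do not degrade as you iterate, and your own text flags this ("with only a mild loss") without closing it. A ``mild loss'' per step is fatal under infinite iteration unless one proves genuine monotonicity (e.g.\ via the entropy), which you gesture at but do not carry out.

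A concrete gap is the uniqueness step. You cite Chen--Zhu type uniqueness on intervals $[\tau,T]\Subset(0,\infty)$, where $|\Rm|\le \Lambda\e\tau^{-1}$ is bounded, but this does not settle uniqueness from $t=0$: two solutions agreeing only at $g(0)=g_0$ both have curvature blowing up like $t^{-1}$ as $t\to 0$, so there is no positive time at which you know a priori they coincide, and Chen's theorem does not self-improve to this setting. The paper instead cites \cite{Lee2025}, which is the uniqueness theorem tailored precisely to flows with scale-invariant estimates $|\Rm|\lesssim t^{-1}$, and uses it both to glue the rescaled flows $g_i(t)$ together and to obtain the stated uniqueness. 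Finally, for the decay $\sup_M|\Rm(g(t))|=o(1)/t$, your blowdown/rigidity sketch has the two difficulties you yourself identify (possible loss of $L^{n/2}$ mass at spatial infinity, and showing the noncompact ancient limit is flat); these are nontrivial and are exactly what is proved in \cite[Theorem 1.2]{ChauMarten2024}, which the paper cites directly rather than re-deriving.
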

\begin{proof}
By \cite[Lemma 3.3]{Martens2025}, the global entropy is bounded from below. 
By applying Theorem~\ref{thm:short-time} on $g_{i,0}:=R_i^{-2}g_0$ for some $R_i\to+\infty$ and rescaling it back, we obtain a sequence of Ricci flows $g_i(t),t\in [0,R_i^2\hat T],$ satisfying the desired properties. 
By \cite{Lee2025}, $g_i(t)=g_{i+k}(t)$ for all $t\in [0, \hat T R_i] $ and $k\in \mathbb{N}$. 
Thus, we obtain a long-time solution $g(t)$ with (i), (ii), and (iv). 
It also satisfies a rough form of (iii) that the upper bound is $\Lambda \e$. 
By Lemma~\ref{lma:local-total-pre}, we recover the sharp upper bound $\e$.  
The curvature estimate as $t\to +\infty$ follows from \cite[Theorem 1.2]{ChauMarten2024}.
\end{proof}

\begin{rem}
In contrast to the bounded curvature case studied in \cite{ChauMarten2024}, we require volume growth to be at most Euclidean. 
It is unclear whether the volume growth assumption is necessary.
\end{rem}

Using Theorem~\ref{thm:immortal-RF}, the diffeomorphic gap theorem is immediate from the Ricci flow existence and its estimates.

\begin{proof}[Proof of Theorem~\ref{intro:main}]
This follows from Theorem~\ref{thm:immortal-RF} and \cite[Theorem 1.1]{HuangPeng2025}. 
See also \cite[Theorem 1.1]{HeLee2021} and  \cite[Theorem 5.7]{Wang2020}.
\end{proof}

Basic examples of manifolds satisfying the assumptions in Theorem~\ref{intro:main} are compact perturbations of Euclidean metrics. 
By the rigidity case in positive mass Theorems, we observe that the additional condition on scalar curvature $\mathcal{R}\geq 0$ is strong enough to rule them out. 
Motivated by this,  we conjecture: 
\begin{conj}
For any $C_s,v_0>0$ and $n\geq 3$, there exists $\delta_0(n,C_s,v_0)>0$ such that the following holds. Suppose $(M^n,g_0)$ is a complete non-compact manifold such that  
\begin{enumerate}
\item[(a)] $C_{sob}(M)\geq C_s^{-1}$;
\item[(b)] $\mathrm{Vol}_{g_0}\left(B_{g_0}(x,r) \right)\leq v_0r^n$ for all $(x,r)\in M\times \mathbb{R}_+$;
\item[(c)] $||\Rm(g_0)||_{L^{n/2}(M)}\leq \delta_0(n,C_s,v_0)$;
\item[(d)]$\mathcal{R}(g_0)\geq 0$.
\end{enumerate}
Then $(M,g_0)$ is isometric to $\mathbb{R}^n$.
\end{conj}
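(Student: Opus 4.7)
My plan is to combine the long-time Ricci flow of Theorem~\ref{thm:immortal-RF} with the preservation of $\mathcal{R} \ge 0$ and a rigidity argument at parabolic infinity. First I would apply Theorem~\ref{thm:immortal-RF} to $(M, g_0)$, shrinking $\delta_0$ if necessary, to obtain an immortal complete Ricci flow $g(t)$ on $M \times [0, \infty)$ with $|\Rm(g(t))| \le \Lambda \e / t$, $\mathrm{inj}(g(t)) \ge \sqrt{t}$, uniformly small $\|\Rm(g(t))\|_{L^{n/2}(M)}$, and crucially $\sup_M |\Rm(g(t))| = o(1)/t$ as $t \to \infty$. From $\mathcal{R}(g_0) \ge 0$ and the evolution equation $\partial_t \mathcal{R} = \Delta \mathcal{R} + 2|\Ric|^2$, a maximum principle on the complete non-compact Ricci flow (using the $C/t$ curvature bound on each time slice and Shi-type estimates to validate the comparison) would give $\mathcal{R}(g(t)) \ge 0$ for all $t > 0$.

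Next I would analyze the flow at parabolic infinity. Consider the parabolically rescaled flows $\tilde g_i(s) := t_i^{-1} g(t_i s)$ for $s \in [1/2, 2]$ and $t_i \to \infty$. By the $o(1)/t$ curvature decay, Hamilton--Cheeger--Gromov compactness (enabled by the uniform injectivity radius and entropy lower bound), and the scale invariance of both the Euclidean-type Sobolev inequality (a) and the Euclidean volume growth (b), a subsequential smooth limit $\tilde g_\infty(s)$ exists which is a complete \emph{flat} Ricci flow on the limit manifold $M_\infty$, and on which conditions (a) and (b) still hold. The sharp rigidity for flat metrics under the Euclidean Sobolev inequality together with Euclidean volume growth forces $(M_\infty, \tilde g_\infty(s))$ to be isometric to the standard $(\R^n, g_E)$.

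The heart of the argument is to transfer this asymptotic rigidity back to $t = 0$. My main candidate is to use a monotone quantity along $g(t)$ whose Euclidean value is attained at $t = \infty$ by Step~3 and forces flatness at $t = 0$. The cleanest option is Perelman's reduced volume $\tilde V(\tau)$ based at a fixed spacetime point $(p_0, 0)$: under the $C/t$ curvature bound and entropy lower bound one shows $\tilde V$ is defined and monotone non-increasing along $g(t)$. Combining its asymptotic value $(4\pi)^{n/2}$ from Step~3 with its reverse bound at small times yields $\tilde V \equiv (4\pi)^{n/2}$ on $M \times (0, \infty)$, forcing $g(t)$ to be a gradient shrinking soliton with vanishing curvature, hence $g_0$ itself is flat. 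Together with the diffeomorphism $M \cong \R^n$ supplied by Theorem~\ref{intro:main}, one concludes $(M, g_0) \cong (\R^n, g_E)$.

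The main obstacle is Step~3 into Step~4: without the \emph{sharp} Euclidean Sobolev constant, Perelman's $\nu$-entropy rigidity does not directly apply, so one cannot simply equate $\nu(g_0) = 0$ with flatness. Overcoming this seems to require either (i) an upgrade of (a) to the sharp Euclidean Sobolev inequality under the additional assumption $\mathcal{R} \ge 0$, a Yamabe-type rigidity whose validity is itself delicate, or (ii) a positive-mass-type theorem in the weak asymptotic setting provided by the Ricci flow smoothing, building on recent work of Bray--Kazaras--Khuri--Stern or Brendle. Controlling the reduced volume down to $t \to 0^+$ for an initial metric of only $L^{n/2}$ curvature regularity is the most delicate analytic point and would likely be the bottleneck of the proof.
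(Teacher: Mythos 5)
This statement is a \textbf{conjecture} in the paper, not a theorem; no proof is given, and the remark that follows it explicitly identifies the exact step where your argument stops as the open point. So the right comparison is not "your proof versus the paper's proof," but "your sketch versus the paper's own description of the obstruction."

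Your Steps 1--3 are precisely the part the paper already knows how to run and describes in Remark~\ref{rem:positive-R}: apply Theorem~\ref{thm:immortal-RF} to get a long-time Ricci flow with scale-invariant curvature decay $o(1)/t$, preserve $\mathcal{R}\ge 0$ by the maximum principle, and pass to a parabolic blowdown which (using the Sobolev and volume-growth hypotheses together with results like \cite{ChanChenLee2022} and \cite{LeeNaberNeumayer2023}) converges to Euclidean space in a weak sense. The paper even knows the blowdown converges to $\bb R^n$ in the $d_p$-sense. Your Step~4 — transferring the asymptotic rigidity back to $t=0$ — is then exactly what the paper says is unclear when $\Ric\ge 0$ is weakened to $\mathcal R\ge 0$. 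In the $\Ric\ge 0$ case the transfer is done via Colding's volume continuity theorem \cite{Colding1997}, which is a Ricci-comparison theorem and does not survive replacing $\Ric\ge 0$ by $\mathcal R\ge 0$. You candidly flag this gap yourself, which is to your credit.

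Beyond that, the specific device you propose for Step~4 does not close for several concrete reasons. Perelman's reduced volume based at $(p_0,0)$ looking forward does not have the rigidity property you invoke; the $(4\pi)^{n/2}$ rigidity statement is for backward reduced volume based at a later spacetime point, and making the base point go to $t=\infty$ in a way that captures the parabolic blowdown requires controlling $\mathcal{L}$-geodesics over all of spacetime, which is precisely where one would feed in a Ricci bound. For $\mathcal R\ge 0$ alone there is currently no monotone quantity known that (i) is rigid at the Euclidean value, (ii) is computable from the blowdown, and (iii) bounds the $t=0$ metric. Your alternatives (i) sharp Sobolev upgrade under $\mathcal R\ge 0$, and (ii) a positive-mass-type theorem in the weak asymptotic setting, are reasonable directions to speculate about, but they are genuinely open and are not contained in the paper; the paper leaves the conjecture as stated for exactly this reason.
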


\begin{rem}\label{rem:positive-R}
When (d) is strengthened to $\Ric(g_0)\geq 0$, the conjecture was proved by Chan and the first named author \cite{ChanLee2024}; see also \cite{Martens2025}. 
This is based on using the long-time behavior of the Ricci flow to show that the tangent cone of $M$ is isometric to Euclidean space and thus $M$ must be  Euclidean space by Colding's volume continuity \cite{Colding1997}. 
Using the same circle of ideas, one should still be able to show that a tangent cone of $M$ exists by \cite[Theorem 1.4]{ChanChenLee2022} and the blow-down of $(M,g_0)$ sub-converges to $\mathbb{R}^n$ in the $d_p$-sense, introduced in \cite{LeeNaberNeumayer2023}, using the method in \cite[Theorem 1.7]{LeeNaberNeumayer2023}. 
It is however unclear how this asymptotic information constrains the original manifold $M$.
\end{rem}


\begin{thebibliography}{10000000}



\bib{Anderson1}{article}{
   author={Anderson, M. T.},
   title={Ricci curvature bounds and Einstein metrics on compact manifolds},
   journal={J. Amer. Math. Soc.},
   volume={2},
   date={1989},
   number={3},
   pages={455--490},
   issn={0894-0347},
}


\bib{Anderson}{article}{
   author={Anderson, M. T.},
   title={Convergence and rigidity of manifolds under Ricci curvature
   bounds},
   journal={Invent. Math.},
   volume={102},
   date={1990},
   number={2},
   pages={429--445},
   issn={0020-9910},
}


\bib{AndersonCheeger}{article}{
   author={Anderson, M. T.},
   author={Cheeger, Jeff},
   title={Diffeomorphism finiteness for manifolds with Ricci curvature and
   $L^{n/2}$-norm of curvature bounded},
   journal={Geom. Funct. Anal.},
   volume={1},
   date={1991},
   number={3},
   pages={231--252},
   issn={1016-443X},
}





\bib{Bando}{article}{
   author={Bando, S.},
   author={Kasue, A.},
   author={Nakajima, H.},
   title={On a construction of coordinates at infinity on manifolds with
   fast curvature decay and maximal volume growth},
   journal={Invent. Math.},
   volume={97},
   date={1989},
   number={2},
   pages={313--349},
   issn={0020-9910},
}



\bib{Carron2019}{article}{
   author={Carron, G.},
   title={Geometric inequalities for manifolds with Ricci curvature in the
   Kato class},
   language={English, with English and French summaries},
   journal={Ann. Inst. Fourier (Grenoble)},
   volume={69},
   date={2019},
   number={7},
   pages={3095--3167},
   issn={0373-0956},
}

\bib{ChanChenLee2022}{article}{
  author={Chan, P.-Y.},
  author={Chen, E.},
  author={Lee, M.-C.},
  title={Small curvature concentration and Ricci flow smoothing},
  journal={J. Funct. Anal.},
  volume={282},
  date={2022},
  number={10},
  pages={Paper No. 109420, 29},
}

\bib{ChanHuangLee2024}{article}{
  author={Chan, P.-Y.},
  author={Huang, S.},
  author={Lee, M.-C.},
  title={Manifolds with small curvature concentration},
  journal={Ann. PDE},
  volume={10},
  date={2024},
  number={2},
  pages={Paper No. 23, 31},
}

\bib{ChanLee2024}{article}{
  author={Chan, P.-Y.},
  author={Lee, M.-C.},
  title={Gap theorem on manifolds with small curvature concentration},
  journal={Math. Z.},
  volume={308},
  date={2024},
  number={1},
  pages={Paper No. 9, 10},
}

\bib{ChauMarten2024}{article}{
  author={Chau, A.},
  author={Martens, A.},
  title={Long-time Ricci flow existence and topological rigidity from manifolds with pinched scale-invariant integral curvature},
  eprint={arXiv:2403.02564},
  date={2024},
}

\bib{ChauMarten2026}{article}{
  author={Chau, A.},
  author={Martens, A.},
  title={A note on Ricci flow from small curvature concentration and a Morrey-type condition},
  eprint={arXiv:2603.28976},
  date={2026},
}

\bib{ChauTamYu2011}{article}{
  author={Chau, A.},
  author={Tam, L.-F.},
  author={Yu, C.},
  title={Pseudolocality for the Ricci flow and applications},
  journal={Canad. J. Math.},
  volume={63},
  date={2011},
  number={1},
  pages={55--85},
}




\bib{Cheeger1970}{article}{
   author={Cheeger, J.},
   title={Finiteness theorems for Riemannian manifolds},
   journal={Amer. J. Math.},
   volume={92},
   date={1970},
   pages={61--74},
   issn={0002-9327},
}

\bib{Cheeger2003}{article}{
   author={Cheeger, J.},
   title={Integral bounds on curvature elliptic estimates and rectifiability
   of singular sets},
   journal={Geom. Funct. Anal.},
   volume={13},
   date={2003},
   number={1},
   pages={20--72},
   issn={1016-443X},
}


\bib{MR1484888}{article}{
   author={Cheeger, J.},
   author={Colding, T. H.},
   title={On the structure of spaces with Ricci curvature bounded below. I},
   journal={J. Differential Geom.},
   volume={46},
   date={1997},
   number={3},
   pages={406--480},
   issn={0022-040X},
   review={\MR{1484888}},
}

\bib{CheegerGromovTaylor1982}{article}{
  author={Cheeger, J.},
  author={Gromov, M.},
  author={Taylor, M.},
  title={Finite propagation speed, kernel estimates for functions of the Laplace operator, and the geometry of complete Riemannian manifolds},
  journal={J. Differential Geom.},
  volume={17},
  date={1982},
  pages={15--53},
}

\bib{Chen2009}{article}{
  author={Chen, B.-L.},
  title={Strong uniqueness of the Ricci flow},
  journal={J. Differential Geom.},
  volume={82},
  date={2009},
  number={2},
  pages={363--382},
}

\bib{Chen2022}{article}{
  author={Chen, E.},
  title={Convergence of the Ricci flow on asymptotically flat manifolds with integral curvature pinching},
  journal={Ann. Sc. Norm. Super. Pisa Cl. Sci. (5)},
  volume={23},
  date={2022},
  number={1},
  pages={15--48},
}

\bib{Cheng2025}{article}{
  author={Cheng, L.},
  title={Pseudolocality theorems of Ricci flows on incomplete manifolds},
  journal={Adv. Math.},
  volume={463},
  date={2025},
  pages={Paper No. 110127, 22},
}

\bib{ChowBook}{book}{
  author={Chow, B.},
  author={Chu, S.-C.},
  author={Glickenstein, D.},
  author={Guenther, C.},
  author={Isenberg, J.},
  author={Ivey, T.},
  author={Knopf, D.},
  author={Lu, P.},
  author={Luo, F.},
  author={Ni, L.},
  title={Ricci Flow: Techniques and Applications. Part II: Analytic Aspects},
  series={Mathematical Surveys and Monographs},
  volume={144},
  publisher={Amer. Math. Soc.},
  address={Providence, RI},
  date={2008},
}

\bib{Colding1997}{article}{
  author={Colding, T. H.},
  title={Ricci curvature and volume convergence},
  journal={Ann. Math.},
  volume={145},
  date={1997},
  pages={477--501},
}

\bib{DaiWei}{article}{
	author={Dai, X.},
	author={Wei, G.},
	title={Comparison Geometry for Ricci Curvature},
	note={Book available at \href{https://web.math.ucsb.edu/~dai/Ricci-book.pdf}{web.math.ucsb.edu/~dai/Ricci-book.pdf}},
}


\bib{Gallot}{article}{
	author={Gallot, S.},
	title={Isoperimetric inequalities based on integral norms of Ricci
		curvature},
	note={Colloque Paul L\'{e}vy sur les Processus Stochastiques (Palaiseau,
		1987)},
	journal={Ast\'{e}risque},
	number={157-158},
	date={1988},
	pages={191--216},
	issn={0303-1179},
}

\bib{Gromov1999}{book}{
  author={Gromov, M.},
  title={Metric structures for Riemannian and non-Riemannian spaces},
  note={Based on the 1981 French original; with appendices by M. Katz, P. Pansu, and S. Semmes; translated by Sean Michael Bates},
  series={Progress in Mathematics},
  volume={152},
  publisher={Birkhäuser Boston, Inc.},
  address={Boston, MA},
  date={1999},
}


\bib{HeLee2021}{article}{
  author={He, F.},
  author={Lee, M.-C.},
  title={Weakly PIC1 manifolds with maximal volume growth},
  journal={J. Geom. Anal.},
  volume={31},
  date={2021},
  number={11},
  pages={10868--10885},
}

\bib{Hochard_thesis}{thesis}{
  author={Hochard, R.},
  title={Théorèmes d'existence en temps court du flot de Ricci pour des variétés non-complètes, non-effondrées, à courbure minorée},
  type={Ph.D. Thesis},
  institution={Université de Bordeaux},
  date={2019},
}

\bib{HuangPeng2025}{article}{
  author={Huang, S.},
  author={Peng, Z.},
   title={A note on a diffeomorphism criterion via long-time Ricci flow},
   journal={Bull. Lond. Math. Soc.},
   volume={58},
   date={2026},
   number={3},
   pages={Paper No. e70321, 10},
   issn={0024-6093},
   review={\MR{5041203}},
}

\bib{JiangWei2025}{article}{
   author={Jiang, W.},
   author={Wei, G.},
   title={Finite diffeomorphism theorem for manifolds with lower Ricci
   curvature and bounded energy},
   journal={Math. Z.},
   volume={309},
   date={2025},
   number={1},
   pages={Paper No. 16, 17},
   issn={0025-5874},
}

\bib{Ledoux}{article}{
   author={Ledoux, M.},
   title={On manifolds with non-negative Ricci curvature and Sobolev
   inequalities},
   journal={Comm. Anal. Geom.},
   volume={7},
   date={1999},
   number={2},
   pages={347--353},
   issn={1019-8385},
}



\bib{Lee2025}{article}{
  author={Lee, M.-C.},
  title={Uniqueness of Ricci flow with scaling invariant estimates},
  eprint={arXiv:2503.20292},
  date={2025},
}

\bib{LeeNaberNeumayer2023}{article}{
  author={Lee, M.-C.},
  author={Naber, A.},
  author={Neumayer, R.},
  title={$d_p$-convergence and $\epsilon$-regularity theorems for entropy and scalar curvature lower bounds},
  journal={Geom. Topol.},
  volume={27},
  date={2023},
  number={1},
  pages={227--350},
}

\bib{LeeTopping2022}{article}{
  author={Lee, M.-C.},
  author={Topping, P. M.},
   title={Three-manifolds with non-negatively pinched Ricci curvature},
   journal={J. Differential Geom.},
   volume={131},
   date={2025},
   number={3},
   pages={633--651},
   issn={0022-040X},
   review={\MR{4975490}},
}

\bib{PeterLi}{book}{
   author={Li, P.},
   title={Geometric analysis},
   series={Cambridge Studies in Advanced Mathematics},
   volume={134},
   publisher={Cambridge University Press, Cambridge},
   date={2012},
   pages={x+406},
   isbn={978-1-107-02064-1},
   review={\MR{2962229}},
}

\bib{MantegazzaMascellaniUraltsev2014}{article}{
  author={Mantegazza, C.},
  author={Mascellani, G.},
  author={Uraltsev, G.},
  title={On the distributional Hessian of the distance function},
  journal={Pacific J. Math.},
  volume={270},
  date={2014},
  number={1},
  pages={151--166},
}

\bib{Martens2025}{article}{
  author={Martens, A.},
  title={Sharpening a gap theorem: nonnegative Ricci and small curvature concentration},
  journal={Calc. Var. Partial Differential Equations},
  volume={64},
  date={2025},
  number={3},
  pages={Paper No. 79, 30},
}

\bib{Martens2025-2}{article}{
  author={Martens, A.},
  title={Removing scalar curvature assumption for Ricci flow smoothing},
  journal={Bull. Lond. Math. Soc.},
  volume={57},
  date={2025},
  number={7},
  pages={1968--1989},
}

\bib{MichaelSimon}{article}{
	author={Michael, J. H.},
	author={Simon, L. M.},
	title={Sobolev and mean-value inequalities on generalized submanifolds of
		$R^{n}$},
	journal={Comm. Pure Appl. Math.},
	volume={26},
	date={1973},
	pages={361--379},
	issn={0010-3640},
}

\bib{Nakajima1988}{article}{
  author={Nakajima, H.},
  title={Hausdorff convergence of Einstein 4-manifolds},
  journal={J. Fac. Sci. Univ. Tokyo Sect. IA Math.},
  volume={35},
  date={1988},
  number={2},
  pages={411--424},
}

\bib{Perelman2002}{article}{
  author={Perelman, G.},
  title={The entropy formula for the Ricci flow and its geometric applications},
  eprint={arXiv:math.DG/0211159},
  date={2002},
}


\bib{PetersenWei1}{article}{
   author={Petersen, P.},
   author={Wei, G.},
   title={Relative volume comparison with integral curvature bounds},
   journal={Geom. Funct. Anal.},
   volume={7},
   date={1997},
   number={6},
   pages={1031--1045},
   issn={1016-443X},
}


\bib{PetersenWei}{article}{
	author={Petersen, P.},
	author={Wei, G.},
	title={Analysis and geometry on manifolds with integral Ricci curvature
		bounds. II},
	journal={Trans. Amer. Math. Soc.},
	volume={353},
	date={2001},
	number={2},
	pages={457--478},
	issn={0002-9947},
}

\bib{SimonTopping2021}{article}{
  author={Simon, M.},
  author={Topping, P. M.},
  title={Local mollification of Riemannian metrics using Ricci flow, and Ricci limit spaces},
  journal={Geom. Topol.},
  volume={25},
  date={2021},
  number={2},
  pages={913--948},
}

\bib{SimonTopping2022}{article}{
  author={Simon, M.},
  author={Topping, P. M.},
   title={Local control on the geometry in 3D Ricci flow},
   journal={J. Differential Geom.},
   volume={122},
   date={2022},
   number={3},
   pages={467--518},
   issn={0022-040X},
}

\bib{TianWangJAMS}{article}{
   author={Tian, G.},
   author={Wang, B.},
   title={On the structure of almost Einstein manifolds},
   journal={J. Amer. Math. Soc.},
   volume={28},
   date={2015},
   number={4},
   pages={1169--1209},
   issn={0894-0347},
}

\bib{Wang2018}{article}{
  author={Wang, B.},
  title={The local entropy along Ricci flow. Part A: the no-local-collapsing theorems},
  journal={Camb. J. Math.},
  volume={6},
  date={2018},
  number={3},
  pages={267--346},
}

\bib{Wang2020}{article}{
  author={Wang, B.},
  title={The local entropy along Ricci flow—Part B: the pseudo-locality theorems},
  eprint={arXiv:2010.09981},
  date={2020},
}


\bib{Xia1}{article}{
   author={Xia, C.},
   title={Open manifolds with nonnegative Ricci curvature and large volume
   growth},
   journal={Comment. Math. Helv.},
   volume={74},
   date={1999},
   number={3},
   pages={456--466},
   issn={0010-2571},
}
\bib{Xia2}{article}{
   author={Xia, C.},
   title={Complete manifolds with nonnegative Ricci curvature and almost
   best Sobolev constant},
   journal={Illinois J. Math.},
   volume={45},
   date={2001},
   number={4},
   pages={1253--1259},
   issn={0019-2082},
}


\bib{DeaneYang}{article}{
	author={Yang, D.},
	title={Convergence of Riemannian manifolds with integral bounds on
		curvature. I},
	journal={Ann. Sci. \'{E}cole Norm. Sup. (4)},
	volume={25},
	date={1992},
	number={1},
	pages={77--105},
	issn={0012-9593},
}

\bib{Ye2015}{article}{
  author={Ye, R.},
  title={The logarithmic Sobolev and Sobolev inequalities along the Ricci flow},
  journal={Commun. Math. Stat.},
  volume={3},
  date={2015},
  number={1},
  pages={1--36},
}
\end{thebibliography}
\end{document}